\numberwithin{equation}{section}
\newcommand{\newword}[1]{\textbf{#1}} 
\newcommand{\capstab}{\cap_{\mathrm{stab}}}
\DeclareMathOperator\codim{codim}
\DeclareMathOperator\Hull{conv}
\DeclareMathOperator\Span{Span}
\newcommand\Uniform[2]{\mathrm{U}_{#1,#2}}
\DeclareMathOperator\Sym{Sym}
\DeclareMathOperator\PPoly{PP}
\DeclareMathOperator\MW{MW}
\DeclareMathOperator\rank{rank}
\DeclareMathOperator\corank{corank}
\DeclareMathOperator\Crem{Crem}
\DeclareMathOperator\Berg{Berg}
\DeclareMathOperator\Cone{Cone}
\DeclareMathOperator\cl{cl} 
\DeclareMathOperator\gr{gr}
\DeclareMathOperator\flip{flip} 
\renewcommand\path{\operatorname{path}}
\newcommand\verts{\operatorname{verts}}
\DeclareMathOperator\Stress{crowd} 
\let\stress\Stress
\DeclareMathOperator\StressHull{CrowdHull}
\DeclareMathOperator\IndicatorGroup{\mathbb I}
\newcommand\Tangent{\mathcal T} 
\DeclareMathOperator\Chains{Chains}
\DeclareMathOperator\StressedSets{\mathcal{S}^{\textnormal{crowd}}} 
\DeclareMathOperator\StressedFlats{\mathcal{F}^{\textnormal{crowd}}}
\newcommand\halfopenpolytope{\mathring{\Delta}} 
\newcommand{\CC}{\mathbb{C}}
\newcommand{\RR}{\mathbb{R}}
\newcommand{\ZZ}{\mathbb{Z}}
\newcommand{\cB}{\mathcal{B}}
\renewcommand{\tt}{\texttt{t}}
\newcommand{\tu}{\texttt{u}}
\newcommand{\tv}{\texttt{v}}
\newcommand{\be}{\boldsymbol{e}} 
\newcommand{\bOne}{\mathbbm{1}}
\theoremstyle{plain}
\newtheorem{thm}{Theorem}[section]
\newtheorem{theorem}[thm]{Theorem}
\newtheorem{prop}[thm]{Proposition}
\newtheorem{proposition}[thm]{Proposition}
\newtheorem{lemma}[thm]{Lemma}
\newtheorem{cor}[thm]{Corollary}
\newtheorem{conj}[thm]{Conjecture}
\newtheorem*{prop*}{Proposition}
\newtheorem*{thrm*}{Theorem}
\theoremstyle{definition}
\newtheorem{example}[thm]{Example}
\newtheorem{eg}[thm]{Example}
\newtheorem{remark}[thm]{Remark}
\newtheorem{definition}[thm]{Definition}
\newcommand{\margincolor}{red}      
\definecolor{darkgreen}{rgb}{0,0.7,0}
\newcounter{margincounter}
\newcommand{\marginnum}{
\ifnum\value{margincounter}<10
\textcolor{\margincolor}{\begin{picture}(0,0)\put(2.2,2.4){\circle{9}}\end{picture}\footnotesize\arabic{margincounter}}
\else\ifnum\value{margincounter}<100
\textcolor{\margincolor}{\begin{picture}(0,0)\put(4.256,2.5){\circle{11}}\end{picture}\footnotesize\arabic{margincounter}}
\else
\textcolor{\margincolor}{\begin{picture}(0,0)\put(6.8,2.5){\circle{14}}\end{picture}\footnotesize\arabic{margincounter}}
\fi\fi
}
\title{The omega invariant of a matroid}
\author{Alex Fink, Kris Shaw and David E Speyer}
\subjclass{05B35 (14T15 52B45)}
\begin{document}

\maketitle
\begin{abstract}
The third author introduced the $g$-polynomial $g_M(t)$ of a matroid, a covaluative matroid statistic which is unchanged under series and parallel extension.
The $g$-polynomial of a rank $r$ matroid $M$ has the form $g_1 t + g_2 t^2 + \cdots + g_r t^r$. 
The coefficient $g_1$ is Crapo's classical $\beta$-invariant. 
In this paper, we study the coefficient $g_r$, which we term the $\omega$-invariant of $M$.
We show that, if $M/F$ is connected for every proper flat $F$ of $M$, and $\omega(N)$ is nonnegative for every minor $N$ of $M$, then all the coefficients of $g_M(t)$ are nonnegative. 
We give several simplified versions of Ferroni's formula for $\omega(M)$, and compute $\omega(M)$ when $r$ or $|E(M)|-2r$ is small.
\end{abstract}
%

\section{Introduction}

\subsection{Background}
Let $E$ be a finite set with $n$ elements. 
Let $\RR^E$ be the vector space of real valued functions on $E$.
For $S \subseteq E$, let $e_S$ be the characteristic function of $S$; we abbreviate $e_{\{ i \}}$ to $e_i$.

The \newword{matroid base polytope} of a matroid $M$, denoted $\Delta(M)$, is the convex hull of $\{ e_B : B \in \cB \}$,
where $\cB$ is the set of bases of~$M$.
In terms of the rank function of~$M$ \cite[(9), (23)]{Edmonds70},
\begin{equation}\label{HDescription}
\Delta(M)=\{x\in\mathbb\RR^E : \sum_{i\in S}x_i\le\rank S 
\text{ for all }S\subseteq E,\text{ with equality for }S=E\}.
\end{equation}
The dimension of $\Delta(M)$ is $n-c(M)$, where $c(M)$ is the number of connected components of $M$.
One presentation of the basis axioms for a matroid 
is that a nonempty collection $\cB$ of $r$-element subsets of $E$ is the set of bases of a matroid if and only if every edge of the polytope $\Hull(e_B : B \in \cB)$
is parallel to $e_i - e_j$ for some $i$, $j \in E$~\cite[Thm~4.1]{GGMS}.

In the last twenty years, there has been an increased interest in matroidal decompositions and matroid valuations:
see for instance the surveys \cite{KatzMatroidTheory,ArdilaGeometries}. 
A \newword{matroidal decomposition} is a decomposition of a matroid base polytope into pieces, each of which is itself a matroid base polytope.
Matroidal decompositions are of interest for example because of connections to 
Hacking, Keel and Tevelev's compactification of the moduli space of hyperplane arrangements~\cite{HackingKeelTevelev},
linear spaces in tropical geometry \cite[\S4.4]{MaclaganSturmfels},
and gross substitutes in economics \cite{GrossSubstitutes}.

A function of matroids is valuative if it is linear in a suitable sense on matroidal decompositions.
Given a decomposition of a matroid polytope $\Delta(N)$, viewed as a polyhedral complex,
say that its \newword{interior} faces are those not contained in the boundary of $\Delta(N)$.
Then a function $v$ taking values in some abelian group is \newword{valuative} such that, if the interior faces of a matroidal decomposition $\Delta(N)$ are the polytopes $\Delta(M_j)$, then we have
\begin{equation}
 v(N) = \sum_j (-1)^{\dim \Delta(N) - \dim \Delta(M_j)}\, v(M_j) . \label{ValuativeEquation}
 \end{equation}
We can remove the signs from eq.~\ref{ValuativeEquation} by defining $v^\circ(M) = (-1)^{c(M)-1} v(M)$; then we have
\begin{equation}
 v^\circ(N) = \sum_j v^\circ(M_j) . \label{ValuativeEquationTwisted}
 \end{equation}
Following \cite{DerksenFink}, we will call a matroid function $v^\circ$ \newword{covaluative} if it obeys eq.~\ref{ValuativeEquationTwisted}.
 
Let $\Uniform{r}{n}$ be the \newword{uniform matroid of rank $r$ on $[n]$}.
Based on many computations with tropical linear spaces, the third author made the following conjecture, which is still open:
\begin{conj} \label{fVectorConj}
In any matroidal decomposition of $\Delta(\Uniform{r}{n})$, the number of faces of dimension $n-i$ is at most $\tfrac{(n-i-1)!}{(r-i)!(n-r-i)!(i-1)!}$. Equality occurs if and only if each polytope in the decomposition comes from a direct sum of series-parallel matroids.
\end{conj}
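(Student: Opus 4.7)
The plan has two parts: first, handle the bound on top-dimensional faces (the $i=1$ case) using the covaluativity of Crapo's $\beta$-invariant; then, extend to $i \ge 2$ by constructing covaluative invariants tailored to each codimension, for which the nonnegativity of $g$-polynomial coefficients established in this paper is a likely key ingredient.

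For $i=1$, the top-dimensional interior cells $\Delta(M_j)$ of a matroidal decomposition of $\Delta(\Uniform{r}{n})$ correspond to connected matroids $M_j$ of rank $r$ on $[n]$. Since $\beta$ is covaluative and vanishes on disconnected matroids, the covaluation identity collapses to
\[
\beta(\Uniform{r}{n}) \;=\; \sum_j \beta(M_j),
\]
summed only over the top-dimensional cells. By Brylawski's classical theorem, $\beta(M_j)\ge 1$ with equality iff $M_j$ is series-parallel, and the identity $\beta(\Uniform{r}{n})=\binom{n-2}{r-1}$ matches the conjectured formula at $i=1$, establishing both the bound and the equality characterization for top-dimensional faces.

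For $i\ge 2$, an $(n-i)$-dimensional interior face corresponds to a matroid $M$ with $c(M)=i$ components, and since every covaluative scalar invariant (including $\beta$ and each $g$-coefficient) vanishes on disconnected matroids, one cannot read face counts off a covaluation identity directly. A natural non-covaluative candidate is the product statistic $M \mapsto \prod_k \beta(M^{(k)})$, which by Brylawski component-wise is bounded below by $1$ on matroids with $i$ components, with equality iff each component is series-parallel, matching the desired equality condition. The plan would then be to construct a genuinely covaluative invariant $v_i$ that vanishes on matroids with $c(M)\ne i$, equals $1$ on direct sums of $i$ series-parallel matroids, and satisfies $v_i(\Uniform{r}{n}) = \frac{(n-i-1)!}{(r-i)!(n-r-i)!(i-1)!}$. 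One route is to work in the graded module of valuative matroid invariants of \cite{DerksenFink} and extract the homogeneous component of the correct degree; another is to build $v_i$ as a signed sum of $g$-polynomial coefficients of various minors of $M$, using the nonnegativity of $g$-coefficients proved here to control signs.

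The main obstacle is precisely the construction of such $v_i$ for $i\ge 2$ together with the verification of the equality characterization: the latter seems to demand a refinement of Brylawski's theorem compatible with the multi-component setting but detectable by a single scalar covaluative statistic. An alternative geometric route, via the correspondence between matroidal decompositions of $\Delta(\Uniform{r}{n})$ and tropical linear spaces and a semicontinuity argument for $f$-vectors under degenerations to the generic (series-parallel) tropical linear space, may be more tractable but likewise requires substantial new input, consistent with the open status of the conjecture.
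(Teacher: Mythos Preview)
This statement is an open conjecture: the paper does not prove it, but rather explains (following \cite{MatroidK}) that it would follow from \Cref{gPosConj}, and then reduces \Cref{gPosConj} further to the nonnegativity of~$\omega$ on all minors (\Cref{OmegaPosReduction}). There is thus no ``paper's own proof'' to compare against.

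Your treatment of the case $i=1$ is correct and classical. For $i\ge 2$ there is a genuine misconception that derails the plan. You assert that ``every covaluative scalar invariant (including $\beta$ and each $g$-coefficient) vanishes on disconnected matroids,'' and conclude that face counts in codimension~$\ge 1$ are invisible to covaluations. This is false: the paper records that $g_M(t)=t^i$ when $M$ is a direct sum of $i$ series-parallel matroids, so $g_i(M)=1$ precisely on the disconnected matroids you want to detect. More generally, multiplicativity of $g$ (\Cref{multiplicative}) together with the vanishing of constant terms gives $g_i(M)=0$ when $c(M)>i$, and $g_i(M)=\prod_k\beta(M^{(k)})\ge 1$ when $c(M)=i$ by Brylawski applied componentwise. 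The covaluative invariant $v_i$ you propose to construct is therefore $g_i$ itself: it already satisfies $g_i(\Uniform{r}{n})=\tfrac{(n-i-1)!}{(r-i)!(n-r-i)!(i-1)!}$ and equals $1$ on sums of $i$ series-parallel matroids.

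Plugging this into the covaluation identity leaves exactly one gap: one needs $g_i(M_j)\ge 0$ for the interior faces with $c(M_j)<i$, which is \Cref{gPosConj}. This reduction is already the content of \cite{MatroidK}; the present paper does not resolve it but reduces it further to \Cref{omegaPosConj}. Once your misconception is corrected, your plan collapses to this known strategy, and the substantive open step remains untouched.
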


With the goal of proving Conjecture~\ref{fVectorConj}, the third author \cite{MatroidK} (see also~\cite{KTutte}) introduced a polynomial $g_M(t)\in\ZZ[t]$ which is a covaluative matroid invariant, such that 
$g_M(t) = 0$ if $M$ contains a loop or coloop,
$g_M(t) = t^i$ if $M$ is a direct sum of $i$ series-parallel matroids (i.e.\ series-parallel extensions of $\Uniform{1}{2}$), and 
$g_{\Uniform{r}{n}}(t) = \sum_{i=1}^{\min(r, n-r)} \tfrac{(n-i-1)!}{(r-i)!(n-r-i)!(i-1)!} t^i$. 
As explained in \cite{MatroidK} (shortly after \cite[Proposition~3.3]{MatroidK}),
Conjecture~\ref{fVectorConj} would follow if we could prove:
\begin{conj} \label{gPosConj}
For any matroid $M$, the coefficients of $g_M(t)$ are nonnegative. 
\end{conj}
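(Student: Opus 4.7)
The plan is to reduce the full conjecture to the single assertion that $\omega(N)\geq 0$ for every matroid $N$. The reduction would rest on an explicit formula expressing each coefficient $g_i(M)$ as a nonnegative integer combination of $\omega$-values of rank-$i$ minors of $M$. Concretely, I would look for an identity of the shape
\[
g_i(M) \;=\; \sum_{N} c_{i,N}(M)\,\omega(N)
\]
in which $N$ ranges over a family of rank-$i$ minors of $M$ and the multiplicities $c_{i,N}(M)$ are manifestly nonnegative integers. Granted such a formula, the hypothesis $\omega(N)\geq 0$ for all minors $N$ of $M$ forces every $g_i(M)$ to be nonnegative---which is the theorem highlighted in the abstract---and universal nonnegativity of $\omega$ then implies the full \Cref{gPosConj}.

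To pin down the identity, I would exploit covaluativity of both sides. The most natural family to try is the restrictions $M|_F$ as $F$ ranges over rank-$i$ flats of $M$, possibly weighted by some combinatorial count on the contraction $M/F$. The case $i=r$ is the tautology $\omega(M)=g_r(M)$. For $i<r$, the coefficients can be nailed down by evaluating on a spanning set of covaluative invariants (such as Schubert matroids), or by matching against the known values $g_M(t)=t^{c(M)}$ on series-parallel direct sums and against the explicit formula for $g_{\Uniform{r}{n}}(t)$; once the formula agrees there, covaluativity propagates it to all matroids. A truncation or flag argument should help reduce the question for $g_i$ to that for $g_r$ on rank-$i$ minors.

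The hard part will be the universal positivity $\omega(M)\geq 0$. The reduction step is a structural identity whose form is heavily constrained by the covaluative framework and by the vanishing of $g_M(t)$ on loops, coloops, and series-parallel matroids, so finding it should be essentially a bookkeeping exercise. The real obstacle is showing $\omega(M)\geq 0$ for every matroid $M$: I would attack this by starting from Ferroni's explicit formula for $\omega$ and attempting to rewrite it as a manifestly nonnegative sum---ideally as a count of some combinatorial objects, such as chains of cyclic flats, lattice points in an auxiliary polytope, or certain tropical intersections. Pending such an interpretation, one is confined to verifying nonnegativity regime by regime, as the paper does when $r$ or $|E(M)|-2r$ is small, and this is where the obstruction to a complete proof sits.
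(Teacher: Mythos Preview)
The statement is \Cref{gPosConj}, which is a conjecture: the paper does not prove it, and it remains open. What the paper does prove is the reduction \Cref{OmegaPosReduction}, and your proposal correctly identifies the two-step structure (reduce to $\omega\geq 0$, then prove $\omega\geq 0$) and correctly locates the unresolved obstruction at the second step.

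For the reduction step itself, your proposed route and the paper's are genuinely different. You aim for a covaluative identity $g_i(M)=\sum_N c_{i,N}(M)\,\omega(N)$ with manifestly nonnegative integer coefficients, to be pinned down by matching on Schubert or series-parallel matroids. The paper instead works intersection-theoretically on the permutahedral fan: \Cref{KrisFormula} writes $g_i(M)$ as a stable intersection of two Minkowski weights, one of which, $\Crem^*\Berg^i(M^\perp)$, is nonnegative for free. The other factor, $\be_{i-1}(y_2,\ldots,y_{n-r})\ast\Berg(M)$, is shown nonnegative cone by cone via \Cref{keyLocalizationComputation}, which evaluates it on $\Cone(F_\bullet)$ as $\omega(M/F_{q-1})$ times a product of stable-intersection numbers of (thickened) Bergman fans of the intermediate minors $M|_{F_j}/F_{j-1}$. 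The positivity of those products comes from positivity of stable intersection (\Cref{capStabPositive}), a geometric fact with no obvious combinatorial substitute.

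Two concrete issues with your plan deserve flagging. First, the $\omega$-values that actually carry the argument are $\omega(M/F)$ for flats $F$ of corank~$i$, i.e.\ contractions, not the restrictions $M|_F$ to rank-$i$ flats you suggest. Second, and more seriously, your verification mechanism---covaluativity plus agreement on a spanning family---would certify an \emph{identity} but says nothing about the \emph{signs} of the coefficients $c_{i,N}(M)$; these are themselves matroid invariants of the complementary minor, and their nonnegativity is precisely the nontrivial content. The paper supplies that content by realising the coefficients as stable intersections of nonnegative Minkowski weights. Without a replacement positivity source, ``matching on Schubert matroids'' leaves a genuine gap in the reduction, not merely bookkeeping.
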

The third author proved Conjecture~\ref{gPosConj} for matroids realizable over $\CC$~\cite{MatroidK}, and thus proved Conjecture~\ref{fVectorConj} when all matroids in the decomposition are realizable over $\CC$.

The polynomial $g_M(t)$ has constant term $0$ unless $E=\emptyset$, and has degree $\leq \min(r, n-r)$. 
Thus, excluding the $E=\emptyset$ case, $g_M(t)$ is of the form $\sum_{i\geq 1} g_i(M) t^i$ for some matroid invariants $g_i(M) \in \ZZ$, where $g_i(M)=0$ for $i > \min(r, n-r)$.
The first coefficient $g_1(M)$ is a classical matroid invariant called $\beta(M)$ \cite[Theorem 5.1]{MatroidK}, \cite[p.~2701]{KTutte}, due to Crapo~\cite{Crapo}.
The aim of this paper is to study the coefficient at the other end of the sum. Specifically, we define 
\[ \omega(M) = g_r(M)  \]
when $M$ is a matroid of rank~$r$. So $\omega(M)=0$ if $r>n-r$.

We record the behavior of $g$ and $\omega$ under direct sums.
\begin{prop} \label{multiplicative}
Let $M = M_1 \oplus M_2$. Then $g_M(t) = g_{M_1}(t) g_{M_2}(t)$ and $\omega(M) = \omega(M_1) \omega(M_2)$.
\end{prop}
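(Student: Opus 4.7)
The second identity follows immediately from the first. Writing $r_i = \rank(M_i)$ and $n_i = |E(M_i)|$, the polynomial $g_{M_i}(t)$ has degree at most $\min(r_i, n_i-r_i) \leq r_i$. Hence in the expansion $g_{M_1}(t) \, g_{M_2}(t) = \sum_{i,j \geq 1} g_i(M_1) g_j(M_2) t^{i+j}$, the only term that can contribute to $t^{r_1 + r_2}$ has $(i,j) = (r_1, r_2)$, giving $\omega(M_1) \omega(M_2)$.

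For the multiplicativity of $g$, my plan is to exploit the compatibility of direct sums with matroidal decompositions. The base polytope of a direct sum factors as $\Delta(M_1 \oplus M_2) = \Delta(M_1) \times \Delta(M_2)$ in $\RR^{E(M_1) \sqcup E(M_2)} = \RR^{E(M_1)} \times \RR^{E(M_2)}$, and more generally $\Delta(N_1) \times \Delta(N_2) = \Delta(N_1 \oplus N_2)$ for any matroids $N_i$ on $E(M_i)$. Therefore a matroidal subdivision $\{\Delta(N_{1,j})\}_j$ of $\Delta(M_1)$ extends to a matroidal subdivision $\{\Delta(N_{1,j} \oplus M_2)\}_j$ of $\Delta(M_1 \oplus M_2)$, with interior faces corresponding to interior faces and with the dimension drops preserved: $\dim \Delta(M_1 \oplus M_2) - \dim \Delta(N_{1,j} \oplus M_2) = \dim \Delta(M_1) - \dim \Delta(N_{1,j})$. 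The signs in \cref{ValuativeEquation} thus match on both sides, so covaluativity of $g$ implies that $M_1 \mapsto g_{M_1 \oplus M_2}(t)$ is covaluative in $M_1$. The analogous statement for $M_1 \mapsto g_{M_1}(t)\,g_{M_2}(t)$ is immediate from covaluativity of $g$, and by symmetry the same holds in $M_2$. Both sides therefore define bi-covaluative invariants of the pair $(M_1, M_2)$.

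To conclude, I would argue that two bi-covaluative invariants agreeing on a spanning class of pairs must agree everywhere. Following the classification of valuative invariants in \cite{DerksenFink}, it suffices to verify the identity when $M_1$ and $M_2$ range over Schubert matroids. I expect the main obstacle to lie here: the verification reduces to a nontrivial identity between the coefficients of $g$ for uniform matroids (given by $g_{\Uniform{r}{n}}(t) = \sum_i \tfrac{(n-i-1)!}{(r-i)!(n-r-i)!(i-1)!} t^i$) and the values of $g$ on their direct sums, whose base polytopes are products of hypersimplices rather than hypersimplices themselves. A cleaner alternative is to bypass this step by invoking directly one of the formulas for $g_M(t)$ from \cite{MatroidK,KTutte} --- such as a K-theoretic or chain-of-flats description --- in which multiplicativity under direct sum is manifest from the product structure of the underlying geometric or combinatorial object.
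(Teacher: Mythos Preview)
Your argument for $\omega$ is exactly the paper's: both observe that $g_{M_i}(t)$ has degree at most $r_i$, so the coefficient of $t^{r_1+r_2}$ in the product is $\omega(M_1)\omega(M_2)$.

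For the multiplicativity of $g$, the paper does not prove anything: it simply cites \cite[Proposition~7.2]{KTutte}. Your ``cleaner alternative'' --- invoking a formula from \cite{MatroidK,KTutte} in which multiplicativity is manifest --- is therefore exactly what the paper does. Your bi-covaluativity detour is unnecessary here, and as you yourself note it is incomplete: reducing to Schubert matroids still leaves a genuine computation, and Schubert matroids are not closed under direct sum, so checking $g_{\Omega_1 \oplus \Omega_2}(t) = g_{\Omega_1}(t)g_{\Omega_2}(t)$ on all such pairs is essentially the original problem restricted to a spanning set, not an obvious base case. If you wanted to push that line through, you would end up appealing to one of the defining formulas for $g$ anyway, which is the citation.
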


\begin{proof}
The first result is \cite[Proposition 7.2]{KTutte}. 
For the second, let $r_i$ be the rank of $M_i$.  So $g_{M_i}(t)$ is a polynomial of degree $\leq r_i$, and $\omega(M_i)$ is the coefficient of $t^{r_i}$ in $g_{M_i}(t)$; similarly, $\omega(M)$ is the coefficient of $t^{r_1+r_2}$ in $g_M(t) = g_{M_1}(t) g_{M_2}(t)$. So $\omega$ is multiplicative because the leading coefficients of polynomials multiply.
\end{proof}

The name ``omega" for our invariant is meant to remind the reader of its position among the coefficients of $g$: The constant term of $g$ is zero, the linear term of $g$ is $\beta$ and the last term of $g$ (if $r \leq n-r$) is $\omega$, corresponding to the positions of $\beta$ and $\omega$ in the Greek alphabet.
By a happy coincidence, Matt Larson 
has discovered a relationship between $\omega$ and the canonical bundle of the wonderful variety%
\footnote{This was mentioned at the BIRS Matroid Theory week but is not to our knowledge published.
Compare \cite[Section 5.1]{EurLarson}.}; 
the canonical bundle to an algebraic variety $X$ is denoted $\omega_X$.

Of course, we conjecture the specific case of Conjecture~\ref{gPosConj} which applies to $\omega$:
\begin{conj} \label{omegaPosConj}
For any matroid $M$, we have $\omega(M) \geq 0$.
\end{conj}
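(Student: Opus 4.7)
My plan is to pursue a geometric approach, with a combinatorial fallback.

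The primary strategy is to leverage the footnote's remark that Matt Larson has related $\omega$ to the canonical bundle $\omega_{X_M}$ of the wonderful variety. The $g$-polynomial originates from tautological K-classes of matroids, so I would first make Larson's observation precise by identifying $\omega(M)$ with an intersection number on the Chow ring of $M$ (in the Feichtner-Yuzvinsky / Adiprasito-Huh-Katz sense), of the shape $\int_{X_M} \alpha \cdot c_1(\omega_{X_M})^k$ for an explicit class $\alpha$ naturally attached to $M$. This identification should follow from Grothendieck-Riemann-Roch applied to the K-theoretic generating-function description of $g_M(t)$, extracting the top coefficient in $t$. A sanity check is provided by \Cref{multiplicative}: multiplicativity under direct sums matches $X_{M_1 \oplus M_2} \cong X_{M_1} \times X_{M_2}$ via Künneth, and imposes strong constraints on the correct class $\alpha$.

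With such an intersection-theoretic formula in hand, the next step is to prove nonnegativity using the Hodge-Riemann relations on the matroid Chow ring (Adiprasito-Huh-Katz, with the K-theoretic and semismall extensions of Ardila-Denham-Huh, Braden-Huh-Matherne-Proudfoot-Wang, and Eur-Huh-Larson). Concretely, one would try to express the integrand as a positive combination of products of nef divisor classes (for instance, classes associated to flats), at which point standard Hodge-theoretic positivity forces the integral to be nonnegative.

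As a fallback, I would attack $\omega(M)$ directly from the simplified versions of Ferroni's formula established earlier in this paper. These formulas present $\omega(M)$ as a signed sum, and the aim would be to reindex or group the terms so that cancellation yields a manifest count, perhaps of flags of cyclic flats or of combinatorial activity data. Regardless of which strategy succeeds, the reductions from \Cref{multiplicative} restrict attention to connected matroids, and covaluativity invites inductive attacks via matroidal decompositions: a decomposition of $\Delta(M)$ into polytopes of matroids for which positivity is already known (for instance, realizable matroids, where positivity of all of $g_M(t)$ is due to the third author) would settle the case at hand.

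The hard part will be the Hodge-theoretic step. Even granted the intersection-theoretic formula, decomposing the integrand into a positive combination of nef classes is where essentially all matroidal positivity proofs in the literature have been forced to supply genuinely new ideas. The fact that the third author could establish $g$-positivity only in the realizable case despite having the full algebro-geometric toolkit suggests the general matroid case will require a new effectivity statement for classes built from the canonical bundle of $X_M$, or alternatively an entirely combinatorial identity extracted from a careful analysis of the simplified formulas for $\omega$.
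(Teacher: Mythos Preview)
The statement you are attempting to prove is \Cref{omegaPosConj}, which is stated in the paper as an open \emph{conjecture}, not a theorem. The paper does not contain a proof of it; indeed, the paper's main result in this direction (\Cref{OmegaPosReduction}) only shows that \Cref{omegaPosConj} would \emph{imply} \Cref{gPosConj}, and the paper otherwise establishes the conjecture only in the special cases of rank $\leq 3$ and of $n \in \{2r, 2r+1\}$. So there is no ``paper's own proof'' to compare against.

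Your proposal is not a proof but a sketch of possible research strategies. Each of the avenues you describe is plausible as a line of attack, but none is close to a complete argument. In particular: the identification of $\omega(M)$ with an intersection number of the shape $\int_{X_M} \alpha \cdot c_1(\omega_{X_M})^k$ is asserted rather than derived, and the paper itself only mentions Larson's observation in a footnote as unpublished; the ``Hodge--Riemann'' step is exactly the step you yourself flag as the hard part, and you give no indication of what the nef decomposition would be or why one should exist; and the ``combinatorial fallback'' via Ferroni's formula is precisely what \Cref{LowRankSection} and \Cref{NearMiddleMatroidSection} of the paper carry out in the accessible special cases, with no claim that the method extends. The valuativity-based reduction to realizable matroids would require knowing that every matroid polytope admits a matroidal decomposition into realizable pieces, which is false in general. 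In short, what you have written is a reasonable description of why the conjecture looks approachable, but it does not constitute a proof, and the paper does not claim one either.
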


\subsection{Main results}
The first of our main results is to identify positivity of $\omega$ as a key case for resolving positivity of $g$:

\begin{theorem} \label{OmegaPosReduction}
Let $M$ be a matroid. Suppose that we knew $\omega(N) \geq 0$ for every minor $N$ of $M$, and suppose also that $M/F$ is connected for all proper flats $F$ of $M$. Then all coefficients of $g_M(t)$ are nonnegative. 
\end{theorem}
We prove Theorem~\ref{OmegaPosReduction} in Section~\ref{ProofOPR}.

In an earlier version of this paper, we did not include the condition that $M/F$ is connected in Theorem~\ref{OmegaPosReduction}. 
We believe that this condition is not necessary, because we believe Conjecture~\ref{gPosConj}, but we do not know how to proceed without this condition. 
We thank Matt Larson for bringing this issue to our attention. 
See further discussion in Example~\ref{DisconnectedExample}.

The $\omega$ invariant vanishes whenever $r > n-r$ or, in other words, $n > 2r$.  Our next results are a computation of $\omega$ when $r \leq 4$. 
The invariant $\omega$ is unaltered by parallel extension, so we may assume without loss of generality that $M$ does not contain parallel elements. Also, $\omega(M)$ is $0$ if $M$ has loops, so we may assume that it does not.

\begin{theorem} \label{LowRankSummary}
Let $M$ have no parallel elements, and no loops. 
If $r=2$, then with these assumptions $\omega(M) = n-3$. If $r=3$, then let $\ell_1$, $\ell_2$, \dots, $\ell_k$ be the cardinalities of the rank $2$ flats of $M$; we have $\omega(M) = \binom{n-4}{2} - \sum \binom{\ell_i-2}{2}$. 
\end{theorem}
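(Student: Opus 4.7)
The rank 2 case is immediate: under the hypotheses (no loops, no parallel elements, $\rank M = 2$), we have $M = \Uniform{2}{n}$, and the explicit formula $g_{\Uniform{r}{n}}(t) = \sum_{i=1}^{\min(r,n-r)}\tfrac{(n-i-1)!}{(r-i)!(n-r-i)!(i-1)!}\,t^i$ from the introduction yields $\omega(M) = g_2(\Uniform{2}{n}) = n-3$.

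For the rank 3 case, the plan is to exploit the covaluative property of $\omega = g_3$, together with the base value $\omega(\Uniform{3}{n}) = \binom{n-4}{2}$, the multiplicativity of \Cref{multiplicative}, the rank 2 case just handled, and the parallel-extension invariance of $\omega$ noted before the theorem. For each rank-2 flat $F$ of $M$ of size $\ell$, consider the hyperplane $\sum_{i \in F} x_i = 2$ cutting $\Delta(\Uniform{3}{n})$; this produces a matroidal decomposition whose two top-dimensional cells are matroid polytopes, namely the polytope of the simple rank-3 matroid on $E(M)$ with $F$ as a line of size $\ell$ and all other rank-2 flats of size two (on the $\sum_F x_i \le 2$ side), and the polytope of a matroid in which $F^c$ is a single parallel class whose simplification is $\Uniform{3}{\ell+1}$ (on the $\sum_F x_i \ge 2$ side). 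The shared codimension-one facet is $\Delta(\Uniform{2}{\ell}\oplus\Uniform{1}{n-\ell})$. Each of these three $\omega$-values is computable in closed form from the uniform formula, \Cref{multiplicative}, and parallel-extension invariance.

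Performing such cuts sequentially, one for each non-trivial rank-2 flat of $M$, and combining by inclusion-exclusion on indicator functions, should give
\[
\omega(M) = \binom{n-4}{2} - \sum_{F \text{ a rank-2 flat of } M} \binom{|F|-2}{2},
\]
with trivial size-2 flats contributing $\binom{0}{2}=0$. The principal obstacle will be showing that the single-flat cuts fit together coherently into one matroidal decomposition with $\Delta(M)$ as a top-dimensional cell, so that the corrections combine cleanly without unwanted higher-order intersection terms. An alternative route is to bypass this multi-cut bookkeeping by invoking one of the simplified forms of Ferroni's formula for $\omega$ announced in the abstract and specializing directly to rank 3, where the resulting sum over chains of flats in a rank-3 matroid collapses to the stated expression.
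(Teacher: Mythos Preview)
Your rank~$2$ argument is the paper's. Your ``alternative route'' is in fact exactly the paper's route for rank~$3$: it applies the Ferroni-path formula summed over chains of crowded flats (\Cref{stressedChains}). In a simple rank~$3$ matroid the only proper nonempty crowded flats are the rank~$2$ flats $L_i$ with $|L_i|\ge 4$, so the only chains are $(\emptyset,E)$ and $(\emptyset,L_i,E)$; counting Ferroni paths from $(\tfrac12,\tfrac12)$ to $(n-\tfrac72,\tfrac52)$ that pass weakly above $p_M(L_i)=(\ell_i-2,2)$ gives $\binom{\ell_i-2}{2}$, and the formula drops out in one line.

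Your primary approach via sequential hyperplane cuts is genuinely different and also works, and the obstacle you flag is milder than you suggest. The decisive observation is that two distinct rank~$2$ flats of a simple rank~$3$ matroid meet in at most one point. Hence if $|B|=3$ and $|B\cap F_k|\ge 2$ then $|B\cap F_i|\le |F_i\cap F_k|+|B\setminus F_k|\le 2$ automatically for every other rank~$2$ flat $F_i$. So when you slice the current $\le$-side by $\sum_{F_k}x=2$, the $\ge$-side and the shared facet are literally the same matroid polytopes $M_{F_k}^{\ge}$ and $\Uniform{2}{\ell_k}\oplus\Uniform{1}{n-\ell_k}$ that you computed for the very first cut, independent of all previous cuts. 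There are no higher-order terms, and each cut subtracts $\binom{\ell_k-3}{2}+(\ell_k-3)=\binom{\ell_k-2}{2}$ from $\omega$ by covaluativity. The one thing you must still check is that each intermediate $\le$-side is itself a matroid polytope; this holds because any family of $\ge 3$-element subsets that pairwise meet in at most one point forms the set of dependent hyperplanes of a rank~$3$ paving matroid. With these two observations your decomposition argument is complete. Its advantage is that it uses only the covaluative property, \Cref{multiplicative}, parallel-extension invariance, and the uniform values, with no appeal to Ferroni's formula; the paper's approach is shorter but leans on the machinery of Sections~\ref{EightFormulaeSection}--\ref{DFRelations}.
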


We will prove these formulae and another for the $r=4$ case in Section~\ref{LowRankSection},
where we also prove that $\omega(M) \geq 0$ for $M$ of rank~$3$,
which is not obvious from Theorem~\ref{LowRankSummary}.

At the other extreme, we can also give formulas for $\omega(M)$ for $n$ close to $2r$.

\begin{theorem} \label{MiddleRankSummary}
If $n=2r$, then $\omega(M)$ is either $0$ or $1$, and we will give a criterion for when each occurs in Section~\ref{NearMiddleMatroidSection1}. If $n=2r+1$, then $0 \leq \omega(M) \leq r$, and we will give a formula for $\omega(M)$ in Section~\ref{NearMiddleMatroidSection2}.
\end{theorem}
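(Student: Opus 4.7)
The plan is to work from one of the simplified versions of Ferroni's formula for $\omega(M)$ that is established earlier in the paper. Any such formula expresses $\omega(M)$ as an (alternating) sum indexed by some combinatorial data of $M$, for instance chains of flats or subsets of $E(M)$ of prescribed sizes. The strategy is to specialize this formula to the regime $n - 2r \in \{0,1\}$, where the index set shrinks dramatically.

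For $n=2r$, I would expect the range of summation to collapse to a single parameter value, so that $\omega(M)$ reduces to counting a specific combinatorial set associated with $M$. Identifying when this set is empty versus a singleton would give both the $\{0,1\}$ dichotomy and the criterion. As a sanity check, from $g_{\Uniform{r}{n}}(t) = \sum_{i=1}^{\min(r,n-r)} \tfrac{(n-i-1)!}{(r-i)!(n-r-i)!(i-1)!}\, t^i$ we read off $\omega(\Uniform{r}{2r}) = 1$, while any matroid with a loop, coloop, or parallel pair gives $\omega = 0$, which makes it plausible that a single binary structural criterion separates the two values.

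For $n = 2r+1$, the summation index should range over an interval of size roughly $r$, producing at most $r+1$ contributions to $\omega(M)$. To obtain the sharp upper bound $\omega(M) \leq r$, I would try either to pair up terms of opposite sign so that only $r$ of them survive, or to show that one extreme term always vanishes. Nonnegativity would then follow from reinterpreting the post-cancellation sum as the cardinality of a concrete combinatorial set of configurations in $M$ (for instance, pointed lines or hyperplanes with a prescribed property).

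The main obstacle will be choosing the right form of Ferroni's formula to specialize: a version convenient for general matroids need not collapse transparently when $n - 2r$ is small, so preliminary manipulation --- regrouping terms by, say, flat sizes or chain lengths --- is likely necessary to expose the near-middle behavior. Once the formula is in the right shape, the two case analyses should be essentially mechanical, with the bulk of the work being the identification of the criterion in the $n=2r$ case and the extraction of a positive combinatorial formula in the $n=2r+1$ case.
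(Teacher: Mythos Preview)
Your high-level strategy---specialize the simplified Ferroni-type formulas to the regime of small $n-2r$---is indeed what the paper does, and for $n=2r$ your sketch is close to correct: there is a unique Ferroni path, so after applying \Cref{NoStress} and \Cref{overStressed} and reducing to connected components via \Cref{multiplicative}, one finds $\omega(M)\in\{0,1\}$ with the criterion being that each component $M_i$ has $|M_i|=2\rank(M_i)$ and no proper nonempty crowded subset.

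For $n=2r+1$, however, your mental model has a gap that would stall the argument. You are right that there are exactly $r$ Ferroni paths, but the sum in \Cref{FerroniOutward} or \Cref{stressedChains} is not over an interval of integers: it is over chains of crowded sets (or flats), of which there can be many, with alternating signs. Neither $\omega(M)\le r$ nor $\omega(M)\ge0$ follows from ``at most $r+1$ contributions'' or from generic term-pairing; longer chains carry positive sign and could in principle push the sum above~$r$.

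The step you are missing is structural. After reducing to a connected matroid with no proper subset of positive crowding, the paper invokes \Cref{FinalCancellationResult}: since crowdings must strictly increase from~$0$ to~$1$, the chains collapse to pairs $(H,E)$ with $H$ a crowding record of crowding~$0$. The heart of the argument is then a dichotomy for the minimal nonempty crowded sets $T_1,\dots,T_p$: either all pairs are disjoint, or all pairs have union~$E$. In the first case the crowding records are the unions $\bigsqcup_{i\in I}T_i$ and an inclusion--exclusion gives $\omega(M)=0$. In the second case the nonempty crowding records of crowding~$0$ are precisely the $T_i$, the complements $E\setminus T_i$ partition~$E$, $p$ is forced to be odd, and the sum evaluates to $(p-1)/2$; the bound $\omega(M)\le r$ then follows from $p\le|E|=2r+1$. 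This dichotomy lemma is the real content, and your outline does not anticipate it.
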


The proofs of Theorems~\ref{LowRankSummary} and~\ref{MiddleRankSummary} rely on formulas for expressing $\omega(M)$ as an alternating sum of $\omega$ evaluated at Schubert matroids. Derksen and Fink~\cite{DerksenFink} provided a formula to write an arbitrary matroid polytope as an alternating sum of Schubert matroid polytopes; we will need variants of this formula, which we prove in Section~\ref{EightFormulaeSection}. 
We will then use a formula for $\omega$ of a Schubert matroid due to Ferroni~\cite{Ferroni}.

\subsection*{Acknowledgments} 
The authors began the collaboration which led to the current work at the Fields Institute semester on combinatorial algebraic geometry in 2016, and renewed their connection at the Banff International Research Station's week on Matroid Theory in 2023; we appreciate the excellent working conditions at both of these research centers.

The authors are grateful to the authors of~\cite{BEST} for sharing early drafts of their work, and for writing clear, straightforward proofs that have replaced many of our longer arguments;
conversations with Andrew Berget have been especially valuable.
We also thank Matt Larson for many useful conversations, for encouraging us to complete this paper, and particularly for raising the importance of the connectedness condition on $M/F$.

During various points in this research,
the first author was supported by Engineering and Physical Sciences Research Council grants EP/M01245X/1 and EP/X001229/1
and by the European Union's Horizon 2020 research and innovation programme under the Marie Sk\l odowska-Curie grant agreement No.~792432. The second author's research was supported by a Jerrald E. Marsden postdoctoral fellowship at the  Fields Institute, a postdoctoral research fellowship from the Alexander von Humboldt foundation, and 
the TMS foundation project ``Algebraic and topological cycles in complex and tropical geometries".
The third author was supported by DMS-1600223, DMS-1855135, DMS-1854225 and DMS-2246570.

\addtocontents{toc}{\smallskip \textbf{Part 1: Reduction of $g$-positivity to $\omega$-positivity}}

\section{Piecewise polynomials and Minkowski weights}\label{PiecewisePolynomialsSection}
Let $V$ be a finite-dimensional real vector space, and $V^\vee$ its dual. We assume that $V$ is equipped with an integer structure, meaning a discrete full rank lattice $V_{\ZZ}$ inside $V$. 
Let $\Sigma$ be a complete fan in $V$.  We will also assume that $\Sigma$ is rational, meaning that each cone is generated by finitely many vectors in~$V_{\ZZ}$. 
We define the \newword{lineality space} $L_{\Sigma}$ of $\Sigma$ to be the minimal face of $\Sigma$, which is also the maximal subspace of $V$ such that $\Sigma$ is invariant under translation by $L_{\Sigma}$. 
In this section, we will review descriptions of graded rings of piecewise polynomials $\PPoly(\Sigma)$ and Minkowski weights $\MW(\Sigma)$ associated to $\Sigma$. 
The motivation for these rings is that, if $\Sigma$ is also unimodular and $L_{\Sigma} = \{ 0 \}$, then $\MW(\Sigma)$ is  isomorphic to the integral  cohomology and $\PPoly(\Sigma)$ is isomorphic to the integral equivariant cohomology 
of the toric variety associated to $\Sigma$. However, our presentation will be purely combinatorial. Good sources for this material include \cite{ FS, ArdilaIntersection}.
One thing that we emphasize in our presentation is that we permit the case where $\dim L_{\Sigma} > 0$.

\subsection{The ring of piecewise polynomial functions}
Let $R$ be the ring of polynomial functions on $V$ with integral coefficients; we can also think of $R$ as $\Sym(V^{\vee}_\ZZ)$,
 and we write $R_d$ or $\Sym^d(V^{\vee}_\ZZ)$ for the degree $d$ part of $R$. Let $\PPoly(\Sigma)$ be the ring of continuous functions $f:V \to \RR$ such that, for each cone $\sigma$ of $\Sigma$, the restriction of $f$ to $\sigma$ is equal to the restriction of a polynomial in $R$.
The ring $\PPoly(\Sigma)$ is graded, where the degree $d$ part, $\PPoly^d(\Sigma)$,  is those piecewise polynomial functions $f$ such that $f|_{\sigma}$ is in $R_d$ for each cone $\sigma$ of $\Sigma$. 

We write $\Sigma_d$ for the set of $d$-dimensional cones of $\Sigma$, and $\Sigma_{\max}$ for $\Sigma_{\dim V}$.
 Since the maximal cones of $\sigma$ cover $V$, an element of $\PPoly(\Sigma)$ is determined by its restrictions to the maximal cones of $\Sigma$. 
In other words, $\PPoly(\Sigma)$ embeds into $R^{\Sigma_{\max}}:=\bigoplus_{\sigma \in \Sigma_{\max}} R$,
the ring of tuples of elements of~$R$ indexed by $\sigma\in\Sigma_{\max}$ with coordinatewise operations.
The next lemma describes the image of this embedding.

For any pair of cones $\sigma_1$ and $\sigma_2$ in $\Sigma_{\max}$ such that $\dim(\sigma_1 \cap \sigma_2) = \dim V - 1$, we define $\nu(\sigma_1, \sigma_2)$ to be a generator of $(\sigma_1 \cap \sigma_2)^\perp\cap V^{\vee}_\ZZ$; the vector $\nu(\sigma_1, \sigma_2)$ is defined up to sign.
\begin{lemma}
Let $(f_{\sigma})_{\sigma \in \Sigma_{\max}}\in R^{\Sigma_{\max}}$. Then the polynomials $f_{\sigma}$ glue to a continuous function in $\PPoly(\Sigma)$ if and only if, for each $\sigma_1$ and $\sigma_2$ such that $\dim(\sigma_1 \cap \sigma_2) = \dim V - 1$, we have $f_{\sigma_1} \equiv f_{\sigma_2} \bmod \nu(\sigma_1, \sigma_2) R$. 
\end{lemma}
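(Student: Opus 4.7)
The forward direction is immediate from continuity. If the $f_\sigma$ glue to a continuous function $f$, then on the codimension-one face $\sigma_1 \cap \sigma_2$ we have $f_{\sigma_1} = f_{\sigma_2}$ as functions. Because $\sigma_1 \cap \sigma_2$ is a full-dimensional subset of the hyperplane $\nu(\sigma_1, \sigma_2)^\perp$ and is therefore Zariski dense in it, the polynomial difference $f_{\sigma_1} - f_{\sigma_2}$ vanishes on the entire hyperplane, and so is divisible by $\nu(\sigma_1, \sigma_2)$ in $R$. The sign ambiguity in $\nu(\sigma_1, \sigma_2)$ is irrelevant to the divisibility condition.

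For the converse, I would show the stronger statement that $f_\sigma$ and $f_{\sigma'}$ agree on $\sigma \cap \sigma'$ for \emph{every} pair of maximal cones, not only those meeting in codimension one; since the maximal cones are closed and cover $V$, this gives a well-defined continuous function whose restriction to each $\sigma$ is polynomial. When $\dim(\sigma \cap \sigma') = \dim V - 1$, the assumed divisibility $\nu(\sigma, \sigma') \mid (f_\sigma - f_{\sigma'})$ immediately yields $f_\sigma = f_{\sigma'}$ on $\sigma \cap \sigma'$, since $\nu(\sigma, \sigma')$ vanishes there. For a general pair, let $\tau = \sigma \cap \sigma'$ and produce a chain of maximal cones $\sigma = \sigma_0, \sigma_1, \ldots, \sigma_k = \sigma'$, all containing $\tau$, with $\dim(\sigma_{i-1} \cap \sigma_i) = \dim V - 1$ for each $i$; iterating the codimension-one case along the chain forces all the $f_{\sigma_i}$ to coincide on $\tau$.

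The existence of such a chain is a connectedness-in-codimension-one statement for the star of $\tau$, which I would derive from completeness of $\Sigma$. Collapsing $\Span(\tau)$, one reduces to the case $\tau = \{0\}$ in a complete fan. Now choose interior points $v \in \sigma$ and $v' \in \sigma'$, and consider a generic path from $v$ to $v'$: the union of cones of codimension at least two is a closed subset of $V$ of lower dimension, so a small perturbation of the straight segment $[v, v']$ avoids this locus entirely. The perturbed path then crosses only the interiors of maximal cones and of their shared facets, and reading off the sequence of maximal cones traversed yields the desired chain.

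The main obstacle is making the transversality/perturbation argument precise; informally it is clear, but the honest write-up must verify that (a) general position of the path can be arranged rationally, (b) consecutive maximal cones traversed genuinely share a codimension-one face, not merely a lower-dimensional face, and (c) the lineality space $L_\Sigma$ causes no trouble (it does not, since full-dimensional cones and codimension-one walls are unaffected by quotienting $L_\Sigma$ out). This is classical for complete polyhedral fans, so I would either cite it from a reference such as \cite{FS} or carry out the perturbation argument explicitly.
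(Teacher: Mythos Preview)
The paper states this lemma without proof; it is treated as a standard fact about piecewise polynomials on complete fans. So there is nothing to compare against, and your task is simply to give a correct proof.

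Your argument is correct and is the standard one. The forward direction is fine as written. For the converse, the reduction to ``any two maximal cones containing a common face $\tau$ can be connected by a chain of maximal cones through codimension-one walls, all containing $\tau$'' is exactly right, and your sketch of why this holds (pass to the star of $\tau$, which is again a complete fan, and use a generic path) is the usual justification. The caveats you list in your final paragraph are the right ones to watch; none of them is a genuine obstacle. One small sharpening: rather than perturbing a path in $V$, it is cleaner to intersect the star fan with a generic affine $2$-plane through the two chosen interior points, obtaining a complete $2$-dimensional fan in which the connectedness statement is obvious. Either route is acceptable, and citing the connectedness-in-codimension-one fact (e.g.\ from the toric literature) is also reasonable given that the paper itself omits the proof entirely.
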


The elements of $V^{\vee}$ give (global) linear functions on $V$. 
We establish some basic lemmas:
\begin{lemma}
Let $\Sigma_1$ and $\Sigma_2$ be two complete fans in finite dimensional vector spaces $V_1$ and $V_2$. We consider $\Sigma_1 \times \Sigma_2$ as a compete fan in $V_1 \oplus V_2$. Then the obvious isomorphism $\Sym(V_{1, \ZZ} \oplus V_{2, \ZZ}) \cong \Sym(V_{1, \ZZ}) \otimes \Sym(V_{2,\ZZ})$ induces an isomorphism
$\PPoly(\Sigma_1 \times \Sigma_2) \cong \PPoly(\Sigma_1) \otimes \PPoly(\Sigma_2)$.
\end{lemma}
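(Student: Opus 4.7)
My plan is to show that the natural graded ring map
\[ \Phi \colon \PPoly(\Sigma_1) \otimes_\ZZ \PPoly(\Sigma_2) \to \PPoly(\Sigma_1 \times \Sigma_2), \]
sending $f \otimes g$ to the function $(v_1, v_2) \mapsto f(v_1) g(v_2)$, is an isomorphism, in three stages: well-definedness, injectivity, and surjectivity, with the last being the real work. All tensor products are over $\ZZ$, and I write $R_i$ for the polynomial ring associated to $V_i$, so that $R = R_1 \otimes R_2$.

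The well-definedness check is routine. The maximal cones of $\Sigma_1 \times \Sigma_2$ are exactly the products $\sigma \times \tau$ of maximal cones of $\Sigma_1$ and $\Sigma_2$; two such share a codim-$1$ face precisely when one factor coincides while the other shares a codim-$1$ face, and in each case the corresponding facet normal lies entirely in $V_1^\vee$ or $V_2^\vee$. For a pure tensor the product tuple $(f_\sigma g_\tau)$ differs across a wall by either $(f_\sigma - f_{\sigma'}) g_\tau$ or $f_\sigma(g_\tau - g_{\tau'})$, both divisible by the appropriate facet normal, so the preceding lemma places $\Phi(f \otimes g) \in \PPoly(\Sigma_1 \times \Sigma_2)$. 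Injectivity is also formal: each $\PPoly(\Sigma_i)$ is torsion-free over $\ZZ$ (as a submodule of the free $\ZZ$-module $R_i^{(\Sigma_i)_{\max}}$) hence $\ZZ$-flat, so tensoring preserves the embeddings into the maximal-cone product rings, and the canonical summand-by-summand identification $R_1^{(\Sigma_1)_{\max}} \otimes R_2^{(\Sigma_2)_{\max}} \cong R^{(\Sigma_1 \times \Sigma_2)_{\max}}$ realizes $\Phi$ as the restriction of this injection.

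Surjectivity I would handle in two stages, invoking flatness both times. Given $h \in \PPoly(\Sigma_1 \times \Sigma_2)$, first fix a maximal cone $\tau \in (\Sigma_2)_{\max}$ and form the slab $h_\tau := (h_{\sigma \times \tau})_\sigma$, a priori only in $R_1^{(\Sigma_1)_{\max}} \otimes R_2$. Expanding each $h_{\sigma \times \tau}$ in a monomial $\ZZ$-basis of $R_2$ (which is also an $R_1$-basis of $R$) converts the $\sigma$-direction gluing on $h$ into the $\PPoly(\Sigma_1)$-gluing on each coefficient, so $h_\tau \in \PPoly(\Sigma_1) \otimes R_2$. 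Second, $\ZZ$-flatness of $\PPoly(\Sigma_1)$ applied to the kernel description of $\PPoly(\Sigma_2) \subseteq R_2^{(\Sigma_2)_{\max}}$ gives
\[ \PPoly(\Sigma_1) \otimes \PPoly(\Sigma_2) = \bigl\{ (p_\tau) \in \PPoly(\Sigma_1) \otimes R_2^{(\Sigma_2)_{\max}} : p_\tau - p_{\tau'} \in \PPoly(\Sigma_1) \otimes \nu(\tau, \tau') R_2 \bigr\}, \]
so it suffices to check these relations for $(h_\tau)$. The $\tau$-direction gluing on $h$ says $(h_\tau - h_{\tau'})_\sigma \in \nu(\tau, \tau') R$ for each $\sigma$; because $R_2/\nu R_2$ is itself a polynomial ring over $\ZZ$ and hence $\ZZ$-flat, the map $\PPoly(\Sigma_1) \otimes R_2/\nu R_2 \hookrightarrow R_1^{(\Sigma_1)_{\max}} \otimes R_2/\nu R_2$ is injective, and I use this to promote the per-$\sigma$ divisibility to the required global statement $h_\tau - h_{\tau'} \in \PPoly(\Sigma_1) \otimes \nu(\tau, \tau') R_2$.

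The hardest part is precisely this two-stage unpacking. A naive pure-tensor decomposition of $h$ cone-by-cone fails because the decomposition is not canonical and does not glue across walls in $\Sigma_2$; what makes the argument go through is the introduction of the slabs $h_\tau$ as canonical intermediate objects and the invocation of $\ZZ$-flatness twice, first to show each slab is an element of $\PPoly(\Sigma_1) \otimes R_2$, and then to assemble the slabs consistently across the fan $\Sigma_2$.
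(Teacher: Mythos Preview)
The paper states this lemma without proof, treating it as one of several ``basic lemmas'' about $\PPoly(\Sigma)$ whose verification is left to the reader. Your argument is correct and supplies exactly the details the paper omits: the well-definedness and injectivity are routine, and your two-stage surjectivity argument (first showing each slab $h_\tau$ lies in $\PPoly(\Sigma_1)\otimes R_2$ by expanding in a monomial basis of~$R_2$, then using $\ZZ$-flatness of $\PPoly(\Sigma_1)$ and of $R_2/\nu R_2$ to assemble the slabs across~$\Sigma_2$) is the natural way to handle the integral-coefficient case. The repeated appeals to flatness are the right mechanism here, since over~$\ZZ$ one cannot simply pick bases of the $\PPoly(\Sigma_i)$.
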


Our final basic lemma of this subsection addresses refinement of fans.
\begin{lemma}
Let $\Sigma$ be a complete fan in $V$, and let $\Sigma'$ be a refinement of $\Sigma$. Then pullback of functions induces a ring homomorphism $\PPoly(\Sigma) \to \PPoly(\Sigma')$.
\end{lemma}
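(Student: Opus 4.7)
The plan is to verify directly, using only the definitions, that the inclusion $\PPoly(\Sigma) \hookrightarrow \PPoly(\Sigma')$ (viewed as a set-theoretic containment, since pullback along the identity map $V \to V$ is just restriction/reinterpretation) is well-defined and respects the ring structure. Since $\Sigma$ and $\Sigma'$ live on the same vector space $V$ and ``pullback of functions'' along the identity is literally the identity on continuous functions $V \to \RR$, the content is simply that $\PPoly(\Sigma) \subseteq \PPoly(\Sigma')$ inside the ring of continuous real-valued functions on $V$.

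First I would unpack the definition of refinement: every cone $\sigma' \in \Sigma'$ is contained in some cone $\sigma \in \Sigma$, and conversely every $\sigma \in \Sigma$ is a union of cones of $\Sigma'$. Now let $f \in \PPoly(\Sigma)$. By hypothesis $f$ is continuous on $V$, and for each $\sigma \in \Sigma$ there is a polynomial $p_\sigma \in R$ with $f|_\sigma = p_\sigma|_\sigma$. For any cone $\sigma' \in \Sigma'$, pick $\sigma \in \Sigma$ containing $\sigma'$; then $f|_{\sigma'} = p_\sigma|_{\sigma'}$ is the restriction of the polynomial $p_\sigma \in R$. Combined with the fact that $f$ is still continuous, this shows $f \in \PPoly(\Sigma')$, giving a well-defined map $\PPoly(\Sigma) \to \PPoly(\Sigma')$.

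Finally, since addition and multiplication in both $\PPoly(\Sigma)$ and $\PPoly(\Sigma')$ are defined pointwise on $V$ (inherited from the ring $C(V,\RR)$ of continuous real-valued functions), and the inclusion $\PPoly(\Sigma) \hookrightarrow \PPoly(\Sigma') \subseteq C(V,\RR)$ is the restriction of the identity, the map preserves sums, products, and the multiplicative identity $1$. Hence it is a ring homomorphism.

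There is no real obstacle here; this is essentially an unpacking of definitions. The only thing to be mildly careful about is making explicit that ``pullback'' here means the tautological map induced by the identity of $V$, and that grading is also preserved (a polynomial of degree $d$ on $\sigma$ remains of degree $d$ on any subcone $\sigma' \subseteq \sigma$), so the induced map is in fact a homomorphism of graded rings.
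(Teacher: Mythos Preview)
Your proof is correct. The paper states this lemma without proof, treating it as an elementary fact; your direct unpacking of the definitions is exactly the intended argument.
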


\subsection{Minkowski weights}
Let $\tau \subset \sigma$ be cones of $\Sigma$, with $\dim \tau = d-1$ and $\dim \sigma = d$. 
Then $(V_{\ZZ} \cap \Span(\sigma)) / (V_{\ZZ} \cap \Span(\tau))$ is a free $\ZZ$-module of rank $1$ inside $V_{\ZZ} / (V_{\ZZ} \cap \Span(\tau))$; let $\rho(\sigma, \tau)$ be the generator of $(V_{\ZZ} \cap \Span(\sigma)) / (V_{\ZZ} \cap \Span(\tau))$ which points into the cone $\sigma$.

Let $\mu$ be a function from $\Sigma_d$ to $\ZZ$. We call $\mu$ a \newword{$d$-dimensional Minkowski weight} if, for every $d-1$ dimensional cone $\tau$ of $\sigma$, we have the following equality in the quotient $V/\Span(\tau)$:
\begin{equation} \sum_{\sigma \supset \tau} \mu(\sigma) \rho(\sigma, \tau) = 0. \label{balancing} \end{equation}
Eq.~\ref{balancing} is called the \newword{balancing condition}.
We define a $d$-dimensional Minkowski weight $w$ to be \newword{nonnegative} if $w(\sigma) \geq 0$ for each $d$-dimensional cone $\sigma$.
We write $\MW_d(\Sigma)$ for the vector space of $d$-dimensional Minkowski weights and we write $\MW(\Sigma) = \bigoplus_d \MW_d(\Sigma)$. 

Minkowski weights behave simply under refinement of fans, as described by the following lemma:
\begin{lemma} \label{MinkowskiRefine}
Let $\Sigma$ be a complete fan in $V$, let $\Sigma'$ be a refinement of $\Sigma$, and let $\mu$ be a $d$-dimensional Minkowski weight on $\Sigma$. Define a function $\mu'$ on the $d$-dimensional cones of $\Sigma'$ as follows: If $\sigma'$ is contained in a $d$-dimensional cone $\sigma$ of $\Sigma$, then $\mu'(\sigma') = \mu(\sigma)$, otherwise $\mu'(\sigma')=0$. Then $\mu'$ is a $d$-dimensional Minkowski weight on $\Sigma'$.
\end{lemma}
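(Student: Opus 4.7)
\smallskip
\noindent\textbf{Proof plan for Lemma~\ref{MinkowskiRefine}.}

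The plan is to fix a $(d-1)$-dimensional cone $\tau'$ of $\Sigma'$ and verify the balancing condition \eqref{balancing} for $\mu'$ at $\tau'$. The key organizing device will be to let $\tau$ denote the smallest cone of $\Sigma$ containing $\tau'$, and to split into cases based on $\dim \tau$. The proof will be purely local: near a point in the relative interior of $\tau'$, the structure of $\Sigma$ is encoded by the star fan $\Star_\Sigma(\tau)$ in $V/\Span(\tau)$, and the structure of $\Sigma'$ is encoded by $\Star_{\Sigma'}(\tau')$ in $V/\Span(\tau')$, with the latter refining (the pullback of) the former.

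First I would handle the generic case $\dim \tau = d-1$, so that $\Span(\tau')=\Span(\tau)$. Any $d$-dimensional cone $\sigma'$ of $\Sigma'$ having $\tau'$ as a face lies in a unique cone $\sigma$ of $\Sigma$ of dimension $\geq d$, and only the case $\dim \sigma = d$ contributes to $\mu'(\sigma')$. For such $\sigma$, viewing $\sigma$ as a ray in $V/\Span(\tau)$ via $\Star_\Sigma(\tau)$, the refinement $\Star_{\Sigma'}(\tau')$ cannot subdivide a ray, so there is exactly one $\sigma'$ inside $\sigma$ with $\tau'$ as a face. Since $\Span(\sigma')=\Span(\sigma)$ and $\Span(\tau')=\Span(\tau)$, we moreover have $\rho(\sigma',\tau')=\rho(\sigma,\tau)$. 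Hence the balancing sum for $\mu'$ at $\tau'$ reduces term by term to the balancing sum for $\mu$ at $\tau$, which vanishes by hypothesis.

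Next I would dispose of the two degenerate cases. If $\dim\tau\geq d+1$, then any $\sigma'\supset\tau'$ of dimension $d$ is contained in a cone of $\Sigma$ of dimension $\geq\dim\tau>d$, so every $\mu'(\sigma')$ in the sum is zero and balancing is trivial. If $\dim\tau=d$, then $\tau'$ sits in the relative interior of $\tau$, and the nonzero contributions to the balancing sum come from $d$-dimensional cones $\sigma'$ of $\Sigma'$ contained in $\tau$ having $\tau'$ as a face. Using $\Star_{\Sigma'}(\tau')$ in $V/\Span(\tau')$, these correspond to the rays in the $1$-dimensional image of $\Span(\tau)$; completeness of $\Star_{\Sigma'}(\tau')$ forces exactly two such rays, which are opposite, giving exactly two such $\sigma'$ on opposite sides of $\Span(\tau')$ in $\Span(\tau)$. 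Both carry weight $\mu(\tau)$, and their $\rho$-vectors are negatives of one another, so they cancel.

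The main obstacle is the bookkeeping of cases in the first case ($\dim\tau=d-1$): one needs to argue carefully that every $d$-dimensional cone of $\Sigma'$ with $\tau'$ as a face lies in a $\Sigma$-cone of dimension exactly $d$ or strictly larger, and that in the former case the containing $\Sigma$-cone $\sigma$ is unique and determines $\sigma'$ uniquely. The cleanest way to package this, and the step I would write most carefully, is through the local star-fan picture: once one notes that $\Star_{\Sigma'}(\tau')$ refines the pullback of $\Star_\Sigma(\tau)$ to $V/\Span(\tau')$, each of the bijection, the equality $\rho(\sigma',\tau')=\rho(\sigma,\tau)$, and the two cancellation arguments above become essentially one-line consequences.
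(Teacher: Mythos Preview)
The paper states this lemma without proof, treating it as standard background in \Cref{PiecewisePolynomialsSection}. Your proof plan is correct and is the natural direct verification: the three-way split on $\dim\tau\in\{d-1,d,\geq d+1\}$ is exhaustive (since $\tau\supseteq\tau'$ forces $\dim\tau\geq d-1$), and passing to the star fans $\Star_{\Sigma'}(\tau')$ and $\Star_\Sigma(\tau)$ makes each case a one-line check. The only point worth making explicit when you write it up is that $\Sigma'$, as a refinement of the complete fan $\Sigma$, is itself complete; this is what ensures $\Star_{\Sigma'}(\tau')$ is complete and hence, in the $\dim\tau=d$ case, forces exactly the two opposite rays inside the line $\Span(\tau)/\Span(\tau')$.
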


\begin{lemma}
Let $\Sigma$ be a complete fan in $V$. The vector spaces $\MW_{\dim V}(\Sigma)$ and $\MW_{\dim L_{\Sigma}}(\Sigma)$ are one-dimensional.  Their respective generators are the Minkowski weight $[\Sigma]$ which is $1$ on every maximal cone of $\Sigma$, and the Minkowski weight $[L_{\Sigma}]$ which is $1$ on $L_{\Sigma}$. 
\end{lemma}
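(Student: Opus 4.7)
The plan is to treat the two cases separately, since each one reduces to a small combinatorial fact about the structure of a complete fan.

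For the claim about $\MW_{\dim V}(\Sigma)$, I would first observe that because $\Sigma$ is complete, every codimension-$1$ cone $\tau$ is contained in exactly two maximal cones $\sigma_1, \sigma_2$: locally near a point in the relative interior of $\tau$, the fan looks like a line's worth of directions transverse to $\tau$, and completeness forces both directions to be covered. Moreover, $V/\Span(\tau)$ is one-dimensional, and the two primitive generators $\rho(\sigma_1,\tau)$ and $\rho(\sigma_2,\tau)$, pointing respectively into the two half-lines, must therefore satisfy $\rho(\sigma_1,\tau) = -\rho(\sigma_2,\tau)$. The balancing condition \Cref{balancing} at $\tau$ thus collapses to $\mu(\sigma_1) = \mu(\sigma_2)$.

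Next I would use the standard fact that the \emph{dual graph} of a complete fan, whose vertices are the maximal cones and whose edges join pairs sharing a codimension-$1$ face, is connected. One way to see this: given any two maximal cones $\sigma, \sigma'$, pick interior points $x$ and $x'$, connect them by a generic line segment inside $V$, and perturb the segment to avoid the codimension-$2$ skeleton; it then crosses a finite sequence of maximal cones, consecutive ones sharing a codimension-$1$ face. Combined with the previous paragraph, this proves that any $\mu \in \MW_{\dim V}(\Sigma)$ is constant on $\Sigma_{\max}$, so $[\Sigma]$ generates $\MW_{\dim V}(\Sigma)$.

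For the claim about $\MW_{\dim L_{\Sigma}}(\Sigma)$, I would argue that $L_\Sigma$ is the unique cone of $\Sigma$ of dimension $\dim L_\Sigma$. Indeed, by definition every cone of $\Sigma$ contains $L_\Sigma$, so any cone of dimension $\dim L_\Sigma$ must equal $L_\Sigma$. In particular there are no cones of dimension $\dim L_\Sigma - 1$ at all, so \Cref{balancing} imposes no conditions on a function $\Sigma_{\dim L_\Sigma} \to \ZZ$. Hence $\MW_{\dim L_\Sigma}(\Sigma)$ is freely generated by the weight $[L_\Sigma]$ sending $L_\Sigma$ to~$1$.

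The only part that is not pure bookkeeping is the connectedness of the dual graph of a complete fan, and this is where I would expect to spend the most care; once that is in hand, both statements are forced by a single application of the balancing condition (or its vacuity in the second case).
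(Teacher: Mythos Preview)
Your argument is correct in both parts: the reduction of the balancing condition at a codimension-one face to $\mu(\sigma_1)=\mu(\sigma_2)$ is right, the connectedness of the dual graph of a complete fan is the standard way to finish, and your treatment of the $\dim L_\Sigma$ case is clean and complete.

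There is nothing to compare against, however: the paper states this lemma without proof, treating it as part of the background review of Minkowski weights (with a pointer to \cite{FS, ArdilaIntersection} for the surrounding material). Your write-up supplies exactly the kind of short direct argument one would expect for such a background fact.
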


The above choice of generator for $\MW_{\dim L_{\Sigma}}(\Sigma)$ fixes  an isomorphism with $\ZZ$. Throughout the next sections, we will implicitly apply this isomorphism for Minkowski weights in $\MW_{\dim L_{\Sigma}}(\Sigma)$ and identify them with integers.

The vector space of Minkowski weights carries a product given by the  ``fan displacement rule" of Fulton and Sturmels~\cite{FS}. We use the tropical geometer's name \newword{stable intersection} for this  product operation on Minkowski weights,
and write it $\capstab$. We will not define stable intersection but refer to \cite[Theorem 3.2]{FS} or \cite[Section 2]{KatzTropInt} for more details. The tropical point of view on stable intersection shows that it is invariant under refinements. 
Stable intersection gives the following map making $\MW(\Sigma)$ into a ring:
$$\capstab \colon \MW_{d_1}(\Sigma) \times \MW(\Sigma)_{d_2} \to \MW_{d_1 + d_2- \dim V}.$$
We remark that the Minkowski weight $[\Sigma]$ acts as the identity element with respect to the operation  $\capstab$.

We now  simplify to the  case that $\Sigma$ is a complete unimodular fan, meaning that 
for any $d$-dimensional cone $\sigma$, 
the monoid $\sigma\cap V_\ZZ$ is generated by $L_\Sigma\cap V_\ZZ$ 
together with $d-\dim L_\Sigma$ further generators which form part of a lattice basis of~$V_\ZZ$.

We define a map $\iota^{\ast} : \PPoly^k(\Sigma) \to \MW_{\dim V - k}(\Sigma)$ which is  respectful of the gradings in the sense that $\iota^{\ast} : \PPoly^k(\Sigma) \to \MW_{\dim V - k}(\Sigma).$  
For a maximal cone $\sigma$ generated in the sense above by $L_\Sigma$ and the lattice basis $v_1, \dots, v_{n-\dim L_\Sigma}$, 
define  the rational function $e_{\sigma} = \prod_{i = 1}^{n-\dim L_\Sigma} \frac{1}{v_i^{\ast}}$,
where $v_i^{\ast}$ is the linear function taking value $1$ on~$v_i$ and $0$ on~$L_\Sigma$ and on each $v_j$ with $j\ne i$. 
For each pair of faces $\tau \subset \sigma$, define the rational function $e_{\sigma, \tau}$ of degree $\dim \tau - \dim V $ to be $e_{\overline{\sigma}}$ where $\overline{\sigma} $ is the image of $\sigma$ in $V / \langle \tau \rangle$. More explicitly, we have  $e_{\sigma, \tau} = \prod_{v_i \not \in \tau} \frac{1}{v_i^{\ast}}$. We can now state the formula for $\iota^{\ast}$. 

\begin{definition}\label{KatzPayne} 
Let $f = (f_{\sigma}) \in \PPoly^k(\Sigma)$ and $\tau$ be a face of $\Sigma$ of  dimension $k$. Then the Minkowski weight $\iota^{\ast}f $ is
\[ (\iota^{\ast} f) (\tau) =  \sum_{\sigma \supset \tau} e_{\sigma, \tau} f_{\sigma}. \]
\end{definition}

It follows from \cite[Proposition 1.2]{KatzPayne} that the right hand side of the above formula simplifies to be an integer.

By \cite[Theorem 2.1 and 3.2]{FS}, the ring of Minkowski weights $\MW(\Sigma)$ is isomorphic to the Chow cohomology ring of the toric variety of $\Sigma$ when $\Sigma$ is complete. 
Recall that  $\PPoly(\Sigma)$ is isomorphic to the equivariant Chow cohomology of the toric variety of $\Sigma$.  By \cite[Theorem 1.4]{KatzPayne}, the map from Definition~\ref{KatzPayne}
 is isomorphic to the canonical non-equivariant restriction map which gives a ring homomorphism from  equivariant Chow cohomology to Chow cohomology. 

Thanks to  the map $\iota^{\ast}$ and the operation of stable intersection, we can view $\MW(\Sigma)$ as a $\PPoly(\Sigma)$-module with multiplication  
$$\ast \colon  \PPoly^k(\Sigma) \times \MW_d(\Sigma) \to \MW_{d-k}(\Sigma),$$
  given by $f \ast \mu = \iota^{\ast}f \capstab \mu.$ 
 The annihilator of $\MW(\Sigma)$ is precisely the ideal
  of $\PPoly(\Sigma)$ generated by the global linear functions.

We now note some straightforward properties of the product $\ast$. The next two propositions can be deduced  from combining Defnition~\ref{KatzPayne} and  properties of stable intersection. 

\begin{proposition}
The $\ast$ operation is compatible with refinement in the following sense: Let $\Sigma$ be a complete fan, let $\Sigma'$ be a refinement of $\Sigma$, let $f \in \PPoly(\Sigma)$ and $\mu \in \MW(\Sigma)$. Let $f'$ be $f$ considered as a piecewise polynomial function on $\Sigma'$, and let $\mu'$ be the pullback of $\mu$ to $\Sigma'$ as in Lemma~\ref{MinkowskiRefine}. Then $(f') \ast (\mu')$ is the pullback of $f \ast \mu$ to $\Sigma'$, as in Lemma~\ref{MinkowskiRefine}.
\end{proposition}

\begin{proposition} \label{astIsLocal}
Let $f \in \PPoly^i(\Sigma)$, let $\mu$ be a $d$-dimensional Minkowski weight, and let $\tau$ be a $(d-i)$-cone of $\Sigma$. Then $(f \ast \mu)(\tau)$ depends only on the values of $f$ and $\mu$ on the cones containing $\tau$.
\end{proposition} 

The $\ast$ multiplication is compatible with fan products in a straightforward way; the next proposition gives the details.

\begin{proposition}
Let $\Sigma_1$ be a complete fan in $V_1$ and let $\Sigma_2$ be a complete fan in $V_2$; we write $\Sigma_1 \times \Sigma_2$ for the product fan in $V_1 \oplus V_2$. Let $f \in \PPoly(\Sigma_1)$, $g \in \PPoly(\Sigma_2)$, $\mu \in \MW_{d_1}(\Sigma_1)$, $\nu \in \MW_{d_2}(\Sigma_2)$.   Let $h  \in  \PPoly(\Sigma_1 \times \Sigma_2) $ be defined so that $h_{\sigma_1 \times \sigma_2}$ is the product of the pullbacks of $f_{\sigma_1}$ and~$g_{\sigma_2}$, and let $\xi$ be the Minkowski weight in $\MW_{d_1 + d_2}(\Sigma_1 \times \Sigma_2) $ defined by $\xi (\sigma_1  \times \sigma_2) = \mu(\sigma_1) \nu(\sigma_2)$ if $\dim \sigma_i = d_i$ for $ i = 1, 2$ and  $\xi(\sigma_1  \times \sigma_2) =0$ otherwise. 
Then $(h \ast \xi)( \tau_1 \times \tau_2)  = (f \ast \mu)(\tau_1) \cdot (g \ast \nu)(\tau_2)$, where $\dim \tau_1  = \dim \sigma_1 - \deg f$ and $\dim \tau_2  = \dim \sigma_2 - \deg g$.
\end{proposition}

The next proposition follows directly from Definition~\ref{KatzPayne}
and the fact that $\Sigma$ has no cones of dimension less than $\dim L_{\Sigma}$. Theorem~\ref{multIsStabIntersect} holds since $[\Sigma]$ is the identity for the operation of stable intersection. 

\begin{proposition}
If $j > \dim V - \dim L_{\Sigma}$ and $f \in \PPoly^j(\Sigma)$, then $\iota^{\ast} f =  0 $ and it follows that $f \ast \mu=0$ for any Minkowski weight $\mu$.
\end{proposition}

\begin{theorem} \label{multIsStabIntersect}
Let $f \in \PPoly^i(\Sigma)$ and $g \in \PPoly^j(\Sigma)$. Let $\nu = f \ast [\Sigma]$ and let $\mu = g \ast [\Sigma]$. Then $(fg) \ast [\Sigma] = \nu \capstab \mu$.
\end{theorem}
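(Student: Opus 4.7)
The plan is to unpack the definitions to reduce the theorem to the statement, already invoked implicitly in the preceding exposition, that $\iota^{\ast}\colon\PPoly(\Sigma)\to\MW(\Sigma)$ is a ring homomorphism.

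First I would use that $[\Sigma]$ is the identity for $\capstab$: together with the definition $f \ast \mu = \iota^{\ast} f \capstab \mu$, this immediately gives
\[
\nu = f \ast [\Sigma] = \iota^{\ast} f \capstab [\Sigma] = \iota^{\ast} f, \qquad \mu = \iota^{\ast} g, \qquad (fg) \ast [\Sigma] = \iota^{\ast}(fg).
\]
So the asserted identity $(fg) \ast [\Sigma] = \nu \capstab \mu$ is equivalent to
\[
\iota^{\ast}(fg) = \iota^{\ast} f \capstab \iota^{\ast} g,
\]
i.e., to the statement that $\iota^{\ast}$ is a ring homomorphism.

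This latter fact is a consequence of \cite[Theorem~1.4]{KatzPayne}, cited just above in our exposition: the map $\iota^{\ast}$ coincides with the canonical non-equivariant restriction map from the equivariant Chow cohomology of the toric variety $X_\Sigma$ to its ordinary Chow cohomology under the isomorphisms $\PPoly(\Sigma) \cong A^{\ast}_T(X_\Sigma)$ and $\MW(\Sigma) \cong A^{\ast}(X_\Sigma)$ from \cite{FS}, and a restriction map between cohomology rings is a ring homomorphism.

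The only point requiring care is that in our setup we allow $\dim L_\Sigma > 0$, which the standard citations usually suppress; however, when $\dim L_\Sigma > 0$ the toric variety $X_\Sigma$ merely acquires a torus factor whose equivariant and ordinary Chow groups are inert for all the relevant identifications, so the cited ring-homomorphism property is unaffected. This is the main (minor) obstacle. If a purely combinatorial argument were preferred, one could instead apply \Cref{astIsLocal} to work locally at each $(\dim V - i - j)$-cone $\tau$, reduce to the quotient fan $\Sigma/\langle\tau\rangle$, and verify the identity via the fan displacement rule for $\capstab$ together with the explicit formula in \Cref{KatzPayne}; this route is longer and unnecessary given the citation available.
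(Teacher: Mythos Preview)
Your proposal is correct and follows the same approach as the paper. The paper's own justification is the single sentence ``\Cref{multIsStabIntersect} holds since $[\Sigma]$ is the identity for the operation of stable intersection,'' relying implicitly on the earlier remark that $\iota^{\ast}$ is a ring homomorphism via \cite[Theorem~1.4]{KatzPayne}; you have simply spelled out these two ingredients explicitly and added a useful caveat about the case $\dim L_\Sigma>0$.
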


The following key property of the multiplication $\ast$ follows immediately from the  description of stable intersection  in~\cite{FS} and Theorem~\ref{multIsStabIntersect}. 
\begin{cor} \label{capStabPositive}
With notation as in Theorem~\ref{multIsStabIntersect}, if $f \ast [\Sigma]$  and $g \ast [\Sigma]$ are nonnegative Minkowski weights, then $(fg) \ast [\Sigma]$ is a nonnegative Minkowski weight.
\end{cor}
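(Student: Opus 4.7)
The plan is to combine \Cref{multIsStabIntersect} with the fan displacement rule, which is precisely the description of stable intersection referenced in the statement.

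First I would invoke \Cref{multIsStabIntersect} to rewrite $(fg)\ast[\Sigma]$ as the stable intersection $\nu \capstab \mu$, where $\nu = f\ast[\Sigma]$ and $\mu = g\ast[\Sigma]$. By hypothesis both $\nu$ and $\mu$ are nonnegative Minkowski weights, so the problem reduces to showing that the stable intersection of two nonnegative Minkowski weights is again nonnegative.

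Next I would evaluate $\nu \capstab \mu$ on a cone $\tau$ using the fan displacement rule of Fulton and Sturmfels \cite[Theorem 3.2]{FS}. Choose a generic displacement vector $v \in V$, and let $\sigma_1 \in \Sigma_{d_1}$ and $\sigma_2 \in \Sigma_{d_2}$ range over pairs of cones whose relative interiors are such that $\tau$ is the (unique) cone of $\Sigma$ whose relative interior contains $\sigma_1 \cap (\sigma_2 + \varepsilon v)$ for small $\varepsilon > 0$. The fan displacement rule then expresses
\[
(\nu \capstab \mu)(\tau) = \sum_{(\sigma_1,\sigma_2)} [V_\ZZ : \Span(\sigma_1)\cap V_\ZZ + \Span(\sigma_2)\cap V_\ZZ]\, \nu(\sigma_1)\, \mu(\sigma_2),
\]
where the sum runs over the pairs just described. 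The lattice index multiplicities appearing here are positive integers, and the values $\nu(\sigma_1)$ and $\mu(\sigma_2)$ are nonnegative by assumption, so every summand is nonnegative. Thus $(\nu \capstab \mu)(\tau) \geq 0$ for every cone $\tau$, which is the desired conclusion.

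There is no serious obstacle here: the statement is essentially a direct consequence of the positivity inherent in the fan displacement rule combined with \Cref{multIsStabIntersect}, and the only thing to be mildly careful about is choosing the displacement vector $v$ generically so that the sum above is well-defined and independent of the choice.
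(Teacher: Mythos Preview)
Your proposal is correct and matches the paper's own argument: the paper simply asserts that the corollary ``follows immediately from the description of stable intersection in~\cite{FS} and \Cref{multIsStabIntersect}'', which is exactly the two-step reduction you carry out. You have merely unpacked what ``immediately'' means by writing out the fan displacement formula and observing that its lattice-index coefficients are positive.
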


\section{Tautological classes of a matroid and the Bergman fan}
%
%
%
%
%
%

\subsection{The permutahedral fan}\label{PermutahedralFanSection}
Let $E$ be a finite set with $n$ elements.
We put $V = \RR^E$, and recall the notations $e_S$ and $e_i$ from the introduction.
For each $i$, $j \in E$, we have a hyperplane $\{ z(i) = z(j) \}$ in $V$; these hyperplanes form a hyperplane arrangement with $n!$ regions, corresponding to the total orders on $E$. 
We define the \newword{permutahedral fan} $\Sigma_E$ in $V$ to be the unimodular complete rational fan whose maximal cones are the regions of this hyperplane arrangement, so the cones of $\Sigma_E$ are indexed by total preorders on $E$.
The lineality space of $\Sigma_E$ is $1$-dimensional, and spanned by $e_E$. 

Cones of the permutahedral fan are of the form $\Span^+(e_{S_1}, e_{S_2}, \ldots, e_{S_k}) + \RR e_E$ where $\emptyset \subsetneq S_1 \subsetneq S_2 \subsetneq \cdots \subsetneq S_k \subsetneq E$ is a chain of nonempty proper subsets of $E$; the dimension of this cone is $k+1$. 
We will denote this cone by 
$\Cone(S_1, S_2, \ldots, S_k)$ or $\Cone(S_{\bullet})$. 
Putting $E_l = S_l \setminus S_{l-1}$, where $S_0 = \emptyset$ and $S_{k+1} = E$, the vector $z$ is in $\Cone(S_{\bullet})$ if and only if $z$ is constant on each $E_l$ and $z(E_1) \geq z(E_2) \geq \cdots \geq z(E_{k+1})$. 

\begin{remark}
We have chosen our sign conventions to match, for example,~\cite{BEST, AHK}. They are opposite to~\cite{AK}.
\end{remark}

\subsection{Tautological classes}\label{TautologicalClassesSection}
Let $M$ be a rank $r$ matroid on $E$. 
Let $z$ be a point in $V$ with generic coordinates.
We define the \newword{$z$-maximal basis} of $M$ to be the basis $(b_1, b_2, \ldots, b_r)$ of $M$ which maximizes $\sum_{i=1}^r z(b_p)$. 
Sorting $(b_1, b_2, \ldots, b_r)$ so that $z(b_1) > z(b_2) > \cdots > z(b_r)$, we define $x_p(z) := z(b_p)$.
By the greedy property of matroids \cite[Chapter 1.8]{Oxley}, we have 
\begin{equation} x_p(z) = \max {\Big\{} t : \rank_M \{ e : z(e) \geq t \} \geq p {\Big\}} . \label{xGlobal} \end{equation}
We use eq.~\ref{xGlobal} to define $x_p$ as a function on all of $V$, including at the points where some of the coordinates of $z$ coincide.
This clearly gives an element $x_p \in \PPoly^1(\Sigma_E)$.
When we need to indicate the matroid $M$, we'll write $x^M_p$.

We define the \newword{$z$-minimal cobasis} of $M$ to be the complementary set $(c_1, c_2, \ldots, c_{n-r}) = E \setminus \{ b_1, b_2, \ldots, b_r \}$, sorted so that $z(c_{n-r}) > z(c_{n-r-1}) > \cdots > z(c_2) > z(c_1)$, and we define $y_q(z) := -z(c_q)$. 
Again, we can alternately define
\begin{equation} y_q(z) = \min {\Big\{} t : \corank_M \{ e : z(e) < -t \} \leq n-q {\Big\}}  \label{yGlobal} \end{equation}
where $\corank_M(S) = \#(S) - \rank_M(S)$, so $y_q \in \PPoly^1(\Sigma_E)$.
Similarly, we'll write $y^M_q$ when we need to indicate the matroid $M$.

In recent literature, starting from \cite[Appendix III]{BEST}, these piecewise linear functions $x_p$ and $y_q$ tend to be spoken of as Chern roots of matroid tautological classes, which we introduce now.
For each matroid $M$, 
\cite{BEST} defines two \newword{tautological classes} $\mathcal S_M$ and $\mathcal Q_M$, and dual classes $\mathcal S^{\ast}_M$ and $\mathcal Q^{\ast}_M$, 
as equivariant $K$-classes of the permutahedral toric variety,
which generalize classes obtained from tautological vector bundles on the Grassmannian in the realizable case. 
The Chern classes of the tautological classes can be interpreted as piecewise polynomials on $\Sigma_E$,
and this is how we will formulate and initially use them.  
Let $\be_k$ be the $k$-th elementary symmetric function. 
Then we define two families of tautological (Chern) classes:
we let $c_k(\mathcal S^{\ast})$ be the element $\be_k(x_1, \ldots, x_r)$ in $\PPoly^k(\Sigma_E)$, and likewise let $c_k(\mathcal Q) = \be_k(y_1,\ldots,y_{n-r})$. 
These two are the ``positive'' families (e.g., $c_1(\mathcal S^{\ast})$ and $c_1(\mathcal Q)$ are nef);
in this paper we will not make much use of their ``negative'' duals,
but for the reader's ease of comparison, in our notation these would be
$c_k(\mathcal S)=\be_k(-x_1, \ldots, -x_r)$ and
$c_k(\mathcal Q^{\ast}) = \be_k(-y_1,\ldots,-y_{n-r})$.
Later we will use a different description of the tautological classes in terms of Minkowski weights, also provided in~\cite{BEST}.

We note that, assuming $M$ is loop-free, we have $x_1(z) = \max_{e \in E} z(e)$ and, assuming that $M$ is coloop-free, we have $y_1(z) = - \min_{e \in E} z(e)$.
We set $\alpha = - \min_{e \in E} z(e)$ and $\beta =  \max_{e \in E} z(e)$ (regardless of whether or not $M$ has loops or co-loops). 

\begin{remark}
We explain the relationship between our definition of $\alpha$, and the formulas which the reader will find in, for example,~\cite[Section 1.1]{ArdilaIntersection}, \cite[Section 5]{HuhKatz} or \cite[Section 5]{AHK}.
The function $\alpha  = - \min_{e \in E} z(e)$ is not constant on $\Sigma_L$, so it does not pull back from a function on  $\RR^E/L_{\Sigma_E}$. If we insist on working with fans that have trivial lineality space, then we need to replace $\alpha$ with a function that differs from $\alpha$ by an element of $V^{\vee}$ and is zero on $\Sigma_L$.
The usual way to do this is to choose some $i \in E$ and define $\alpha_i = z(i) + \alpha = \max_e(z(i) - z(e))$.
This maximum is closely related to the maximum in~\cite{HuhKatz}.
These papers also sometimes introduce functions $x_S$ indexed by $S \in 2^E$, and defined as the unique piecewise linear function on $\Sigma_E$ with $x_S(e_S) = 1$ and $x_S(e_T)=0$ for $S \neq T$. 
Note that we have $\alpha_i(e_S) = 1$ if $i \in S$ and $\alpha_i(e_S)=0$ if $i \not\in S$. So $\alpha_i = \sum_{S \ni i} x_S$, a formula which the reader will find in~\cite{ArdilaIntersection} or~\cite{AHK}.
\end{remark}

We write $\Crem$ for the negation map on $\RR^E$, which is the ``tropical Cremona transform". 
We have $y^{M^{\perp}}_q = \Crem^{\ast} x^M_q$ and $x^{M^{\perp}}_p = \Crem^{\ast} y^M_p$.

\subsection{The $x$ and $y$ functions in the neighborhood of a cone of the permutahedral fan}
Let $z$ be a point of $\RR^E$, lying in the relative interior of the cone  $\Cone(S_1, S_2, \ldots, S_k)$.
Put $E_i = S_i \setminus S_{i-1}$, where $S_0 = \emptyset$ and $S_{k+1} = E$. 
Recall that the cones of the permutahedral fan $\Sigma_E$ are of the form  $\Cone(S_{\bullet})$ where $S_{\bullet}$ is a flag of subsets of $E$. 
We have $\Cone(S_{\bullet}) \subset \Cone(S'_{\bullet})$  if the preorder on $E$ determined by $S'_{\bullet}$ refines the preorder of~$S_{\bullet}$.
Therefore, the star of $\Cone(S_{\bullet})$ in $\Sigma_E$  is isomorphic to $\prod_{i=1}^{k+1} \Sigma_{E_i} \subset \prod_{i=1}^{k+1} \RR^{E_i} = \RR^E$. 

This isomorphism provides a map $\rho_{S_{\bullet}}:\PPoly(\Sigma_E) \to \PPoly(\prod_{i=1}^{k+1} \Sigma_{E_i})$.
Given $f\in\PPoly(\Sigma_E)$, 
we let $\rho_{S_{\bullet}}(f)$ be the unique piecewise polynomial on the star of~$\Cone(S_{\bullet})$
that agrees with~$f$ on the union of the cones of~$\Sigma_E$ containing $\Cone(S_{\bullet})$.
We now want to describe the images of $x_p$ and~$y_q$ under~$\rho_{S_{\bullet}}$.
This is essentially the content of \cite[Proposition 5.3]{BEST}, but that proposition does not mention the individual Chern roots $x_p$ and $y_q$, so we re-prove it here in our notation.

Let $M_i$ be the matroid $M|{S_i} / S_{i-1}$, so $M_i$ is a matroid on the ground set $E_i$. We set $n_i = \#(E_i)$ and $r_i = \rank(M_i)$. 
So $\sum r_i = r$ and $\sum (n_i-r_i) = n-r$.

\begin{lemma} \label{restrictXY}
Let $a$ be the unique index such that $\sum_{i=1}^{a-1} r_i < p \leq \sum_{i=1}^a r_i$ and let $p' = p-\sum_{i=1}^{a-1} r_i$. Then
$\rho_{S_{\bullet}}(x_p^M)$ is the function $x_{p'}^{M_a}$, pulled back from the factor $\Sigma_{E_a}$ of~$\prod_i \Sigma_{E_i}$.

Likewise,  let $b$  be the unique index such that $\sum_{j=b+1}^{k+1} (n_j-r_j) < q \leq \sum_{j=b}^{k+1} (n_j-r_j)$ and let $q' = q-\sum_{j=b+1}^{k+1} (n_j-r_j)$. Then 
$\rho_{S_{\bullet}}(y_q^M)$ is the function $y_{q'}^{M_b}$, pulled back from the factor $\Sigma_{E_b}$. 
\end{lemma}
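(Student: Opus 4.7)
The plan is to read off both $x_p^M$ and $y_q^M$ from the greedy algorithm description in \eqref{xGlobal}--\eqref{yGlobal}, and observe that the block structure along $\Cone(S_\bullet)$ forces this greedy procedure to decompose as a product over the blocks $E_i$.

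First I would observe that any $z$ in a cone of $\Sigma_E$ containing $\Cone(S_\bullet)$ satisfies $z(u)>z(v)$ whenever $u\in E_a$, $v\in E_b$ and $a<b$; equivalently, the preorder induced by $z$ refines the block preorder $E_1>E_2>\cdots>E_{k+1}$. Varying $z$ through the star of $\Cone(S_\bullet)$ therefore amounts to independently perturbing the values within each $E_i$, which is exactly the product structure $\prod_i\Sigma_{E_i}$ used to define $\rho_{S_\bullet}$.

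Next I would invoke the greedy description \eqref{xGlobal}: the $z$-maximal basis is built by scanning the ground set in decreasing $z$-order and keeping each element that remains independent over the previously-kept ones. Because of the block inequalities above, this scan processes all of $E_1$ first, then all of $E_2$, and so on. A classical matroid fact then identifies which elements of $E_a$ are kept: once a basis $B_{<a}$ of $M|S_{a-1}$ is fixed, the $E_a$-elements that extend it to a basis of $M|S_a$ are precisely the bases of the minor $M_a=M|S_a/S_{a-1}$. Applied together with the greediness of the scan inside $E_a$, this shows that the $E_a$-contribution to the $z$-maximal basis of $M$ is exactly the $(z|_{E_a})$-maximal basis of $M_a$. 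Inducting on $a$, the sorted $z$-maximal basis of $M$ is the concatenation, in order $a=1,2,\ldots,k+1$, of the sorted $(z|_{E_a})$-maximal bases of $M_a$, with $r_a$ elements coming from block $a$.

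From this block decomposition the formula for $\rho_{S_\bullet}(x_p^M)$ is immediate: the index $p$ falls in block $a$ iff $\sum_{i<a}r_i<p\le\sum_{i\le a}r_i$, and inside that block $b_p$ is the $p'$-th entry of the basis of $M_a$, so its $z$-value equals $x_{p'}^{M_a}$ evaluated on the $E_a$-coordinates of $z$, which is the pullback of $x_{p'}^{M_a}$ from the $\Sigma_{E_a}$ factor. The statement for $y_q^M$ is handled in the dual way: either by running the same greedy argument using \eqref{yGlobal}, scanning in increasing $z$-order from $E_{k+1}$ down to $E_1$ and picking cobases of each $M_b$; or, more economically, by applying the already-established $x$ statement to $M^\perp$ and invoking the identity $y_q^M=\Crem^*x_q^{M^\perp}$ noted just before the lemma, which swaps the roles of blocks and intertwines the indexings.

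The main obstacle is simply the bookkeeping in the greedy step: one must verify that extending ``a greedy basis of $M|S_{a-1}$ built from $S_{a-1}$'' to ``a greedy basis of $M|S_a$ built from $S_a$'' is the same thing as independently choosing a greedy basis of the minor $M_a$ on $E_a$. This is a standard consequence of the augmentation/exchange axiom, and it is the only non-routine point; after that, matching the indices $p\leftrightarrow(a,p')$ and $q\leftrightarrow(b,q')$ is a matter of counting.
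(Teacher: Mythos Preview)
Your proposal is correct and follows essentially the same approach as the paper's own proof: both argue that near $\Cone(S_\bullet)$ the block ordering $z(E_1)>z(E_2)>\cdots$ forces the $z$-maximal basis of $M$ to decompose as the concatenation of the $z|_{E_a}$-maximal bases of the minors $M_a$, and then read off the index correspondence $p\leftrightarrow(a,p')$. Your write-up is in fact more explicit than the paper's about why the $E_a$-portion of the greedy basis of $M$ is a greedy basis of the minor $M_a$; the paper simply asserts this in one sentence. Your suggested alternative for the $y$-statement via $y_q^M=\Crem^*x_q^{M^\perp}$ is a nice shortcut the paper does not take (it just says the $y$ case is ``analogous'').
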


\begin{proof}
We verify the formula for $x$; the formula for $y$ is analogous.
Recall that, on the relative interior of the cone $\Cone(S_{\bullet})$, the coordinate function $z:E\to\mathbb R$ is constant on each set $E_i$ and $z(E_1) > z(E_2) > \cdots > z(E_{k+1})$. Thus, near such a point, the value(s) of $z$ on $E_i$ are greater than the values of $z$ on $E_j$, for $i <j$.

For such a nearby $z$, let $B=\{ b_1, b_2, \ldots, b_r \}$ be the $z$-maximal basis and let $B_i = B \cap E_i$. Thus, when $i$ equals the index $a$ in the statement of the lemma, $B_a$ is the $z|_{E_a}$-maximal basis of $M_a$. Since $B_i \subseteq E_i$, the value(s) of $z$ on $B_i$ are greater than the value(s) of $z$ on $B_j$ for $i<j$. We have $|E_i| = r_i$. Thus, $B_a = \{ b_{\sigma+1}, b_{\sigma+2}, \ldots, b_{\sigma+r_a} \}$ for $\sigma = \sum_{i=1}^{a-1} n_i$. So, for $\sigma+1 \leq p \leq \sigma+r_a$, we have $x_p^M = x_{p - \sigma}^{M_a}$ near $\Cone(S_{\bullet})$.
\end{proof}

\subsection{The Bergman fan and the Minkowski weights of the tautological functions}
The Bergman fan of a matroid $M$, denoted $\Berg(M)$, was introduced to study the tropicalization of linear spaces.
We interpret it as a Minkowski weight on~$\Sigma_E$, following Ardila and Klivans~\cite{AK};
it was earlier described with a coarser fan structure by Feichtner and Sturmfels~\cite{FeichtnerSturmfels}.

The Bergman fan is an $r$-dimensional Minkowski weight: On a cone $\Cone(S_1, S_2, \ldots, S_{r-1})$, the Bergman fan is $1$ if all the $S_i$ are flats of $M$, and $0$ otherwise.
Note that, since $M$ has rank $r$ and we have $\emptyset \subsetneq S_1 \subsetneq S_2 \subsetneq \cdots \subsetneq S_{r-1} \subsetneq E$, this implies that $S_i$ is a flat of rank $i$.

We can describe the support of the Bergman fan in another way. 
Resuming the notation from before Lemma~\ref{restrictXY},
with $S_{\bullet}$ a flag of sets determined by $z\in V$ and $M_i$ associated minors of~$M$,
denote the matroid $\bigoplus_i M_i$ as $\gr^z(M)$ or $\gr^{S_{\bullet}}(M)$.
Observe that the bases of $\gr^z(M)$ are the bases $B$ of $M$ which maximize $\sum_{b \in B} z(b)$. 
Then $z$ is in $\Berg(M)$ if and only if the matroid $\gr^z(M)$ is loop-free.

\begin{example} \label{U23Example}
Consider the uniform matroid of rank $2$ on the ground set $\{1,2,3 \}$. 
The support of $\Berg(M)$ is $\Cone(\{ e_1 \}) \cup \Cone( \{ e_2 \}) \cup \Cone( \{ e_3 \})$.
We have $\Cone( \{ 1 \}) = \{ z_1 \geq z_2=z_3 \}$, etcetera.
Let $w = (w_1, w_2, w_3)$ be a point in the relative interior of $\Cone(\{ 1 \})$.
 The associated graded matroid $\gr^w(M)$ is $M|{\{ 1 \}} \oplus M/\{1\} = U(1,\{ 1 \}) \oplus U(1, \{ 2,3 \})$, which is loop-free.
 The bases of $\gr^w(M)$ are $12$ and $13$, with weights $z_1+z_2 = z_1+z_3$; note that $23$ is not a basis of $\gr^w(M)$, and $z_2 + z_3 < z_1 +z_2 = z_1 + z_3$.
\end{example}

The description of the support of the Bergman fan helps motivate the following definition: For $\ell \geq 0$, we define the \newword{$\ell$-thickened Bergman fan} to be a Minkowski weight supported on the set of $z$ such that $\gr^z(M)$ has $\leq \ell$ loops. More precisely, $\Berg^{\ell}(M)$ is an $r+\ell$ dimensional Minkowski weight. Its value on $\Cone(S_1, S_2, \ldots, S_{r+\ell-1})$ is $1$ if $\gr^{S_{\bullet}(M)}$ has $\leq \ell$ loops, and is $0$ otherwise.
We next describe the thickened Bergman fan in terms of the lattice of flats. 

\begin{lemma}
Let $S_1 \subset S_2 \subset \cdots \subset S_{r+\ell-1}$ be a chain of proper nonempty subsets of $E$, and put $S_0 = \emptyset$ and $S_{r+\ell} = E$. 
Then $\gr^{S_{\bullet}(M)}$ has $\leq \ell$ loops if and only if exactly $\ell$ minors $M|S_{i+1}/S_i$ are single loops and the rest of the minors are rank~$1$ uniform matroids. 
In particular, $\gr^{S_{\bullet}(M)}$ has exactly $\ell$ loops in this case. 
\end{lemma}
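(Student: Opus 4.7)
The plan is to use the direct-sum structure $\gr^{S_{\bullet}}(M) = \bigoplus_{i=0}^{r+\ell-1} M_i$, where $M_i := M|S_{i+1}/S_i$, together with the fact that rank and number of loops are both additive under direct sum. Write $n_i = |S_{i+1}\setminus S_i|$ (all nonempty, so $n_i \geq 1$) and $r_i = \rank(M_i)$. Then $\sum_i r_i = \rank(M) = r$, and $\gr^{S_{\bullet}}(M)$ has exactly $\sum_i \text{loops}(M_i)$ loops.

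The key observation is a counting pigeon: there are $r+\ell$ summands and their ranks sum to $r$, so at least $\ell$ of the indices $i$ must satisfy $r_i = 0$. For any such index, every element of the nonempty ground set $E_i = S_{i+1}\setminus S_i$ is a loop of $M_i$, so $M_i$ contributes $n_i \geq 1$ loops to the total. This already gives
\[
  \text{loops}(\gr^{S_{\bullet}}(M)) \;\geq\; \sum_{i:\,r_i=0} n_i \;\geq\; \#\{i : r_i = 0\} \;\geq\; \ell.
\]

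Next I characterise equality. For the total to be exactly $\ell$, all three inequalities above must be equalities. The last two equalities force that there are precisely $\ell$ indices with $r_i=0$ and each has $n_i=1$; that is, $\ell$ of the minors $M_i$ are single-loop matroids. Equality in the first inequality then requires that every $M_i$ with $r_i > 0$ be loop-free. Since there are $(r+\ell) - \ell = r$ such remaining minors and their ranks sum to $r$, each of them has $r_i = 1$. A loop-free matroid of rank~$1$ is precisely a rank-$1$ uniform matroid $\Uniform{1}{n_i}$, so we recover exactly the claimed description. Conversely, if the chain has this form, then $\gr^{S_{\bullet}}(M)$ has exactly $\ell$ loops (the $\ell$ single-loop summands), which is in particular $\leq \ell$, giving the ``in particular'' clause.

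The argument is essentially a bookkeeping one, so I do not expect a genuine obstacle; the only thing to be careful about is remembering that the $E_i$ are nonempty (because the chain $S_\bullet$ is strictly increasing), which is what pins down $n_i = 1$ for the loop-only summands and converts the rank/loop inequalities into an exact structural statement.
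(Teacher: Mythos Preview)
Your proof is correct and follows essentially the same approach as the paper. The paper phrases the pigeonhole in terms of the chain $0 \leq \rank(S_1) \leq \cdots \leq \rank(S_{r+\ell-1}) \leq r$ having at least $\ell$ equalities, whereas you phrase it as at least $\ell$ of the $r_i$ vanishing; since $r_i = \rank_M(S_{i+1}) - \rank_M(S_i)$, these are the same count, and the subsequent deductions (single-element $E_i$ at the rank-$0$ steps, loop-freeness forcing uniformity at the rank-$1$ steps) match.
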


\begin{proof}
One direction is obvious. 
For the other, suppose that $\gr^{S_{\bullet}(M)}$ has $\leq \ell$ loops. 
Let $\rank$ be the rank function of~$M$. 
Then 
\[0 \leq \rank(S_1) \leq  \rank(S_2)  \leq \cdots \leq \rank(S_{r+\ell-1}) \leq r,\]
and at least $\ell$ of these inequalities are equalities. 
If $\rank(S_{i}) = \rank(S_{i+1})$, then every element in $S_{i+1} \backslash S_i$ is a loop of $\gr^{S_{\bullet}(M)}$. 
So in order to have $\leq \ell$ loops, we must have that exactly $\ell$ of the inequalities are equalities, 
and if $\rank(S_{i})  =   \rank(S_{i+1})$ then $|S_{i+1}\backslash S_i| = 1$. This moreover implies that if $\rank(S_{i})  \neq \rank(S_{i+1})$, then $\rank(S_{i})  =   \rank(S_{i+1}) -1$. 

Therefore, the matroid $M|S_{i+1}/S_i$ is a single loop in the $\ell$ cases when $\rank(S_{i})  =   \rank(S_{i+1})$, and in the remaining cases where $\rank(S_{i})  =   \rank(S_{i+1}) -1$, the matroid $M|S_{i+1}/S_i$ has rank~$1$. If it is not uniform, it contains a loop, contradicting $\gr^{S_{\bullet}(M)}$ having  $\leq \ell$ loops. This proves the lemma. 
\end{proof}

Following the above lemma, the connection between the thickened Bergman fan and the tautological classes is provided by~\cite[Proposition 7.4]{BEST}.
\begin{theorem} \label{ChernIsThickBerg}
We have $c_{n-r-\ell}(\mathcal Q) \ast [\Sigma_E]=\Berg^{\ell}(M)$ and $c_{r-\ell}(\mathcal S^{\ast}) \ast [\Sigma_E]=\Crem^{\ast} \Berg^{\ell}(M^{\perp})$.
\end{theorem}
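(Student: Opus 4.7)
The plan is to quote~\cite[Proposition 7.4]{BEST} for the first identity $c_{n-r-\ell}(\mathcal Q) \ast [\Sigma_E] = \Berg^\ell(M)$, and then to derive the second identity from the first by Cremona duality. The second statement is not made explicitly in~\cite{BEST}, but the duality step is short.

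For the Cremona duality step, I will use the relations $y_q^{M^\perp} = \Crem^* x_q^M$ and $x_p^{M^\perp} = \Crem^* y_p^M$ from the previous subsection. Passing to $k$-th elementary symmetric polynomials yields
\[
c_k(\mathcal Q^{M^\perp}) = \Crem^* c_k(\mathcal S^{\ast,M}) \quad \text{in } \PPoly^k(\Sigma_E).
\]
The Cremona involution $\Crem$ negates coordinates, so it preserves the lattice $\mathbb Z^E$, maps $\Sigma_E$ to itself (sending each cone to the cone of the reversed preorder), and sends $[\Sigma_E]$ to $[\Sigma_E]$. From \Cref{KatzPayne} and the definition of stable intersection, pullback via $\Crem$ commutes with $\iota^*$ and with $\capstab$, hence with the $\ast$ operation.

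Applying the first identity to the rank-$(n-r)$ matroid $M^\perp$ gives $c_{r-\ell}(\mathcal Q^{M^\perp}) \ast [\Sigma_E] = \Berg^\ell(M^\perp)$. Pulling this back via $\Crem^*$ and using the compatibility above produces
\[
c_{r-\ell}(\mathcal S^{\ast}) \ast [\Sigma_E] = \Crem^* \Berg^\ell(M^\perp),
\]
as required.

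The main obstacle—already handled in~\cite{BEST}—is of course the first identity. If one wanted a self-contained argument, the approach would be to use \Cref{astIsLocal} to reduce to a single $(r+\ell)$-cone $\Cone(S_\bullet)$, then restrict to the star $\prod_i \Sigma_{E_i}$ via \Cref{restrictXY}, obtaining
\[
\rho_{S_\bullet}(c_{n-r-\ell}(\mathcal Q^M)) = \sum_{(d_i)} \prod_i c_{d_i}(\mathcal Q^{M_i})
\]
summed over tuples $(d_i)$ with $\sum d_i = n-r-\ell$ and $0 \leq d_i \leq n_i - r_i$. The product compatibility proposition would factor the $\ast$-evaluation at the lineality $\sum_i \RR e_{E_i}$, and a dimension count on each factor forces $d_i = n_i - 1$ and hence $r_i \leq 1$ for every $i$. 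The remaining single-factor evaluations $(c_{n_i-1}(\mathcal Q^{M_i}) \ast [\Sigma_{E_i}])(\RR e_{E_i})$ would need to be checked to equal $1$ precisely when $M_i$ is rank-$1$ uniform or a single loop; this case analysis, particularly the loopy rank-$0$ and rank-$1$ situations, is the delicate point, and matches the characterization in the preceding lemma.
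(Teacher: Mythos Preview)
Your proposal is correct and matches the paper's approach: the paper does not give a proof of this theorem at all but simply attributes it to \cite[Proposition~7.4]{BEST}. You cite the same source for the first identity and add a short, correct Cremona duality derivation for the second identity (using $y_q^{M^\perp}=\Crem^*x_q^M$ and the fact that $\Crem^*$ commutes with $\ast$ and fixes $[\Sigma_E]$), which the paper leaves implicit; your optional self-contained sketch via \Cref{astIsLocal} and \Cref{restrictXY} is also along the right lines.
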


\begin{example}
We continue Example~\ref{U23Example}, so let $M$ be the uniform matroid of rank $2$ on the ground set $\{1,2,3 \}$. 
Then $x_1 = \max(z_1, z_2, z_3)$, $x_2 = \text{median}(z_1, z_2, z_3)$ and $y_1 = - \min(z_1, z_2, z_3)$.
The piecewise linear function $y_1$ is linear on the connected components of $\RR^3 \setminus \Berg(M)$, consistent with our claim that $y_1 \ast [\Sigma_{[3]}] = \Berg(M)$. 
\end{example}

We will also need the following key formula, which is very close to a formula conjectured in~\cite{LdMRS} and proved in~\cite[Theorem 10.12]{BEST}.
 Recall the notation $\be_k$ for the $k$-th elementary symmetric function and the notation $c(M)$ for the number of connected components of $M$.
 Recall that $\MW_1(\Sigma_E) \cong \RR$, with generator given by the Minkowski weight $[L_{\Sigma_E}]$; we identify $\MW_1(\Sigma_E)$ with $\RR$ in the following. 
\begin{theorem} \label{KrisFormula}
Let $M$ be a  co-loop free 
matroid with $n \geq 2$ elements.  We have:
\[ g_i(M) = (-1)^{c(M)-1} {\Big(} \be_{i-1}(y_2, y_3, \ldots, y_{n-r}) \ast \Berg(M) {\Big)} \capstab {\Big(}\Crem^{\ast} \Berg^{i}(M^{\perp}) {\Big)} . \]
\end{theorem}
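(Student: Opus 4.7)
The plan is to reduce Theorem \ref{KrisFormula} to the formula of BEST Theorem 10.12 by working in the piecewise polynomial description of tautological classes and then matching the two presentations modulo the annihilator of $[\Sigma_E]$.

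First, I would collapse the right-hand side into a single piecewise polynomial applied to $[\Sigma_E]$. Theorem \ref{ChernIsThickBerg} gives
\[
\Berg(M) = c_{n-r}(\mathcal Q) \ast [\Sigma_E], \qquad \Crem^{\ast}\Berg^{i}(M^{\perp}) = c_{r-i}(\mathcal S^{\ast}) \ast [\Sigma_E].
\]
Using Theorem \ref{multIsStabIntersect} together with the fact that $[\Sigma_E]$ is the identity for $\capstab$, the right-hand side of the formula rewrites as
\[
(-1)^{c(M)-1} \Big( \be_{i-1}(y_2,\ldots,y_{n-r}) \cdot c_{n-r}(\mathcal Q) \cdot c_{r-i}(\mathcal S^{\ast}) \Big) \ast [\Sigma_E].
\]
This places the statement inside the tautological-class framework of BEST, where $g_M(t)$ is the object computed.

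Next, I would invoke BEST Theorem 10.12, which expresses the coefficients of $g_M(t)$ as an explicit polynomial in the Chern roots $x_p$ and $y_q$ (or, equivalently, the tautological Chern classes) applied to $[\Sigma_E]$. The proof then reduces to verifying that our piecewise polynomial in step one agrees, modulo the annihilator of $[\Sigma_E]$, with the piecewise polynomial appearing in BEST's formula. Since the annihilator of $[\Sigma_E]$ is the ideal of $\PPoly(\Sigma_E)$ generated by global linear functions, any discrepancy by such a term is irrelevant. The sign $(-1)^{c(M)-1}$ tracks the distinction between the covaluative $g$ and the naturally valuative tautological classes, and the assumption that $M$ is coloop-free ensures that $y_1 = \alpha = -\min_e z(e)$ globally, which in turn means the factor $c_{n-r}(\mathcal Q) = y_1 y_2 \cdots y_{n-r}$ already consumes a factor of $y_1$; this explains why the prefactor elementary symmetric polynomial starts at $y_2$.

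The main obstacle is the symbolic matching in the last step. Depending on the precise form in which BEST write their formula, a symmetric-function manipulation is needed to reconcile their presentation with ours: expressing $c_i(\mathcal Q) = \be_i(y_1,\ldots,y_{n-r})$ via Newton-style relations, one can separate the contribution of $y_1$ and rewrite $y_1 \be_{i-1}(y_2,\ldots,y_{n-r}) = c_i(\mathcal Q) - \be_i(y_2,\ldots,y_{n-r})$; repeated application of such identities and dualization via $\Crem^{\ast}$ (which swaps $x^{M^\perp}$ and $y^M$) should convert BEST's tautological polynomial into ours. Care is needed in two places: first, to keep the piecewise polynomial structure intact during these manipulations so that Theorem \ref{multIsStabIntersect} applies; second, to handle the boundary cases (small $i$, or $r$ close to $n-r$) where degree considerations can make some intermediate expressions vanish by the proposition preceding Theorem \ref{multIsStabIntersect} rather than by algebraic cancellation.
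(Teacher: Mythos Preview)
Your approach is essentially the paper's, run in reverse: the paper starts from \cite[Theorem~10.12]{BEST}, substitutes $\alpha=y_1$ (using coloop-freeness), collapses the sum over $j$ via a symmetric-function identity, and then splits the resulting product into a stable intersection using Theorems~\ref{multIsStabIntersect} and~\ref{ChernIsThickBerg}. You have identified all of these ingredients correctly.

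Two points where you overcomplicate things. First, no working ``modulo the annihilator of $[\Sigma_E]$'' is needed: the symmetric-function identity holds on the nose in $\PPoly(\Sigma_E)$, not merely after quotienting by global linear functions. Second, the ``Newton-style'' manipulation you anticipate is a single clean identity,
\[
\sum_{j\ge 0} (-1)^j\, y_1^{\,j}\, \be_{i-1-j}(y_1,y_2,\ldots,y_{n-r}) \;=\; \be_{i-1}(y_2,\ldots,y_{n-r}),
\]
which is just the coefficient extraction from $\prod_{k\ge 2}(1+y_k t)=\dfrac{1}{1+y_1 t}\prod_{k\ge 1}(1+y_k t)$. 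No dualization via $\Crem^{\ast}$ enters this step, and there are no delicate boundary cases: the identity is purely algebraic and the degree bookkeeping takes care of itself.
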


\begin{proof}
Recall the notation $\alpha$ for the function $\alpha(z) = - \min_{e \in E} z(e)$; this is an element of  $\PPoly^1(\Sigma_E)$.
The following formula is \cite[Theorem 10.12]{BEST}: 
\[ g(s) = (-1)^{c(M)} \sum_i \sum_j \left( \alpha^j c_{i-j-1}(\mathcal Q^{\ast}) c_{r-i}(\mathcal S^{\ast}) c_{n-r}(\mathcal Q) \right) \ast [\Sigma_E] (-s)^i .\]
We have edited the formula from how it appears in~\cite{BEST} to avoid notation we have not defined, replacing ``$\deg_{\alpha}$" with its definition as a sum over powers of $\alpha$. 
We've also replaced ``$|E|$" with ``$n$".

We now extract the coefficient $g_i(M)$ of $s^i$. Because $M$ is co-loop free, we have $\alpha = y_1$:
\[
g_i(M) = (-1)^{c(M)+i} \sum_j  \left( y_1^j c_{i-j-1}(\mathcal Q^{\ast}) c_{r-i}(\mathcal S^{\ast}) c_{n-r}(\mathcal Q) \right) \ast [\Sigma_E] 
\]
Recalling the definition of the tautological class $c_k(\mathcal Q^{\ast})$ from Section~\ref{TautologicalClassesSection}, this is
\begin{align*}
g_i(M) &=  \sum_j (-1)^{c(M)+i-j} \left((-1)^j y_1^j \be_{i-j-1}(-y_1, -y_2, \ldots, -y_{n-r}) c_{r-i}(\mathcal S^{\ast}) c_{n-r}(\mathcal Q) \right) \ast [\Sigma_E]
\\ &= \sum_j (-1)^{c(M)-1} \left((-1)^j y_1^j \be_{i-j-1}(y_1, y_2, \ldots, y_{n-r}) c_{r-i}(\mathcal S^{\ast}) c_{n-r}(\mathcal Q) \right) \ast [\Sigma_E]. 
\end{align*}
We next use the identity $\sum_{j\ge0} (-1)^j  y_1^j \be_{i-j-1}(y_1, y_2, \ldots, y_{n-r}) = \be_{i-1}(y_2, y_3, \ldots, y_{n-r})$,
which can be checked by comparing the coefficient of any monomial on both sides.
The coefficient is visibly zero unless the monomial has the form $y_1^k y_{\ell_1}\cdots y_{\ell_{i-1-k}}$ for $\ell_1,\ldots,\ell_{i-1-k}\subseteq\{2,\ldots,n-r\}$ distinct.
If $k>0$, this monomial appears just twice on the left with opposite signs, for $j=k$ and $j=k-1$, and is absent on the right,
whereas if $k=0$ it has coefficient 1 on the left (at $j=0$) and on the right.
With this, the last displayed sum simplifies to
\[ (-1)^{c(M)-1} {\Big(} \be_{i-1}(y_2, y_3, \ldots, y_{n-r})  c_{r-i}(\mathcal S^{\ast}) c_{n-r}(\mathcal Q) {\Big)} \ast  [\Sigma_E] .\]
Using Theorem~\ref{multIsStabIntersect}, this is
\[ (-1)^{c(M)-1} {\Big(} \be_{i-1}(y_2, y_3, \ldots, y_{n-r}) \ast c_{n-r}(\mathcal Q) \ast  [\Sigma_E] {\Big)} \capstab {\Big(} c_{r-i}(\mathcal S^{\ast}) \ast [\Sigma_E] {\Big)}. \]
We can evaluate two of the $\ast$-products using Theorem~\ref{ChernIsThickBerg}, to give
\[ (-1)^{c(M)-1} {\Big(} \be_{i-1}(y_2, y_3, \ldots, y_{n-r}) \ast \Berg(M) {\Big)} \capstab {\Big(}\Crem^{\ast} \Berg^{i}(M^{\perp}) {\Big)}, \]
which is the desired formula.
\end{proof}

We note how Theorem~\ref{KrisFormula} specializes to give a formula for $\omega(M)$:
\begin{cor} \label{OmegaFormula}
Let $M$ be a co-loop free matroid of rank $r$ with $n \geq 2$ elements. Then
\[ \omega(M) =  (-1)^{c(M)-1} \be_{r-1}(y_2, y_3, \ldots, y_{n-r}) \ast \Berg(M) . \]
\end{cor}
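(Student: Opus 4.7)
The plan is to derive Corollary \ref{OmegaFormula} directly from Theorem \ref{KrisFormula} by specializing to $i = r$ and simplifying the second factor. Setting $i = r$ in Theorem \ref{KrisFormula} gives
\[ \omega(M) = g_r(M) = (-1)^{c(M)-1} \bigl( \be_{r-1}(y_2, \ldots, y_{n-r}) \ast \Berg(M) \bigr) \capstab \bigl(\Crem^{\ast} \Berg^{r}(M^{\perp}) \bigr), \]
so everything comes down to showing that $\Crem^{\ast}\Berg^r(M^\perp)$ is the identity element $[\Sigma_E]$ for stable intersection.

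The main observation is that $\Berg^r(M^\perp) = [\Sigma_E]$. Indeed, the thickened Bergman fan $\Berg^\ell(M^\perp)$ has dimension $(n-r)+\ell$, so $\Berg^r(M^\perp)$ is top-dimensional, i.e., a Minkowski weight on the maximal cones of~$\Sigma_E$. On any maximal cone $\Cone(S_1, \ldots, S_{n-1})$, every consecutive difference $|S_{i+1} \setminus S_i|$ equals~$1$, so each minor $(M^\perp|S_i)/S_{i-1}$ lives on a single-element ground set and is either a loop or a coloop. Since $M^\perp$ has rank $n-r$, exactly $n-r$ of these factors are coloops and exactly $r$ are loops. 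Thus $\gr^{S_\bullet}(M^\perp)$ has exactly $r$ loops, which is $\leq r$, so by definition $\Berg^r(M^\perp)$ takes value~$1$ on every maximal cone.

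To finish, I note two easy facts: the Cremona map $\Crem\colon V \to V$ permutes the maximal cones of $\Sigma_E$, so $\Crem^\ast[\Sigma_E] = [\Sigma_E]$; and $[\Sigma_E]$ acts as the identity for $\capstab$, as recorded after the definition of stable intersection. Substituting these into the displayed formula yields
\[ \omega(M) = (-1)^{c(M)-1} \be_{r-1}(y_2, \ldots, y_{n-r}) \ast \Berg(M), \]
as claimed. There is no substantial obstacle here --- the entire argument is a matter of recognizing that the $r$-thickened Bergman fan of $M^\perp$ degenerates to the fundamental class $[\Sigma_E]$ because a full-flag refinement forces the associated graded to split completely into loops and coloops.
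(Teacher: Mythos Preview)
Your proof is correct and follows essentially the same approach as the paper: specialize \Cref{KrisFormula} to $i=r$ and observe that $\Berg^{r}(M^{\perp})=[\Sigma_E]$ because $\gr^z(M^{\perp})$ can have at most $r$ loops, so the second factor in the stable intersection is the identity. Your verification on maximal cones via the full-flag decomposition into loops and coloops is a slightly more explicit version of the paper's argument, which simply bounds the number of loops of $\gr^z(M^{\perp})$ by the corank of~$M^{\perp}$.
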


\begin{proof}
This is the case $r=i$ of Theorem~\ref{KrisFormula}. In this case, the second term in the stable intersection is $\Crem^{\ast} \Berg^{r}(M^{\perp})$. Since $M^{\perp}$ has rank $n-r$, the matroid $\gr^z(M^{\perp})$ has at most $r$ loops for any $z$, so $\Berg^{r}(M^{\perp})$ is supported on the whole of $\RR^E$. In other words, $\Berg^r(M^{\perp})$ equals $[\Sigma_E]$, as does its pullback by $\Crem$,
and stable intersection with $[\Sigma_E]$ acts as the identity.
\end{proof}

At the other extreme, the special case $i=1$ of Theorem~\ref{KrisFormula} gives the $\beta$-invariant.
We can drop the $(-1)^{c(M)-1}$ term in this case, since both sides of the equation are $0$ if $c(M)>1$.
We recover a formula for $\beta$ found in \cite{BEST},
which for complex realizable matroids is Varchenko's conjecture on critical points of a hyperplane arrangement
\cite{Varchenko} (see also \cite[Theorem 13]{CHKS}).
\begin{cor}[{\cite[Theorem 6.2]{BEST}}] \label{betaFormula}
For any co-loop free matroid $M$, we have
\[ \beta(M) = \Berg(M) \capstab \Crem^{\ast} \Berg^1(M^{\perp}). \]
\end{cor}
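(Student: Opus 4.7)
The plan is to specialize Theorem~\ref{KrisFormula} to $i=1$ and then deal with the sign. Since $\be_0 = 1$ and multiplication by $1$ fixes Minkowski weights, the first factor $\be_0(y_2, \ldots, y_{n-r}) \ast \Berg(M)$ is simply $\Berg(M)$, and Theorem~\ref{KrisFormula} becomes
\[ \beta(M) = g_1(M) = (-1)^{c(M)-1}\, \Berg(M) \capstab \Crem^{\ast} \Berg^1(M^{\perp}). \]
When $c(M) = 1$, the sign is $+1$ and the formula is exactly the desired identity. The remaining task is to check that both sides of the asserted equation vanish when $c(M) \geq 2$, which renders the sign irrelevant.

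The left-hand side vanishes in the disconnected case by a classical property of Crapo's $\beta$ invariant: $\beta(M_1 \oplus M_2) = 0$ whenever both summands are nonempty.

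For the right-hand side, I would argue by lineality. If $M = M_1 \oplus \cdots \oplus M_c$ decomposes on the partition $E = E_1 \sqcup \cdots \sqcup E_c$, then $M^\perp$ decomposes on the same partition. Because $\gr^z(M) = \bigoplus_i \gr^{z|_{E_i}}(M_i)$ (and similarly for $M^\perp$), the property of $\gr^z$ being loop-free, or having at most one loop, is unchanged when one translates $z$ by a scalar multiple of any $e_{E_i}$. Hence both $\Berg(M)$ and $\Berg^1(M^\perp)$ are supported on sets with lineality containing $\Span(e_{E_1}, \ldots, e_{E_c})$, a space of dimension $c \geq 2$; the Cremona transform $\Crem$ preserves this lineality. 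Since the lineality space of a stable intersection contains the intersection of the lineality spaces of its two factors, the right-hand side is an element of $\MW_1(\Sigma_E)$ whose support has lineality of dimension at least $c \geq 2$. But $\MW_1(\Sigma_E) \cong \ZZ$ is generated by $[L_{\Sigma_E}]$, whose lineality $\RR e_E$ has dimension $1$. The only element with strictly larger lineality is $0$, so the right-hand side must vanish.

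The only step that requires real argument is the lineality computation for the disconnected case; the rest is the mechanical substitution $i=1$ into Theorem~\ref{KrisFormula}. The vanishing of $\beta$ on disconnected matroids is classical, and the corresponding vanishing on the geometric side comes for free from the product structure of the (thickened) Bergman fans together with the elementary fact that a Minkowski weight with more lineality than the ambient fan can carry must be zero.
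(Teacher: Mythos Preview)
Your argument is correct and follows the same route as the paper: specialize \Cref{KrisFormula} at $i=1$ and then discard the sign $(-1)^{c(M)-1}$ by observing that both sides vanish when $c(M)>1$. The paper simply asserts this last vanishing without justification, whereas you supply one via the lineality of the (thickened) Bergman fans of a direct sum; this is a reasonable and standard way to fill in that step.
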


\begin{remark}
The intersection theoretic-formula for the beta invariant in Corollary~\ref{betaFormula} 
is very similar to, but not the same as, two previous results.
We first explain the main result of~\cite{AEP}:
Let $i \in E$ and let $\pi : \RR^E \to \RR^{E \setminus i}$ be the linear projection that forgets the $i$ coordinate. There are pullback and pushforward maps on Minkowski weights induced by $\pi$. Then \cite{AEP} shows that
\[ \beta(M) = \Berg(M) \capstab \Crem^{\ast} \pi^{\ast} \pi_{\ast} \Berg(M^{\perp}). \]
Both $\Berg^1(M^{\perp})$ and $\pi^{\ast} \pi_{\ast} \Berg(M^{\perp})$ are thickenings of $\Berg(M^{\perp})$ to a fan of one dimension larger, but they are not the same thickening. Additionally, the main result of \cite{RauPH} shows that  the beta invariant of a matroid is, up to sign, the self-intersection of the diagonal class in $\Berg(M) \times \Berg(M)$. 
It would be interesting to understand better the relationship between Corollary~\ref{betaFormula} and the results of \cite{RauPH} and~\cite{AEP}.
\end{remark}

\section{Proof of {Theorem~\ref{OmegaPosReduction}}}\label{ProofOPR}

In this section, we will prove Theorem~\ref{OmegaPosReduction}.
We start with some preliminary reductions.
First of all, if $M$ has a loop or co-loop, then $g_M(t)=0$, so there is nothing to prove.
Next, if $M = \bigoplus M_i$ for various connected matroids $M_i$ with at least two elements each, then $g_M(t) = \prod g_{M_i}(t)$, so if all the $g_{M_i}(t)$ have nonnegative coefficients, then so does $g_M(t)$.
Thus, it is enough to prove Theorem~\ref{OmegaPosReduction} in the case that $M$ is connected and has at least two elements.

By Theorem~\ref{KrisFormula}, we have
\[ g_i(M)=  {\Big(} \be_{i-1}(y_2, y_3, \ldots, y_{n-r}) \ast \Berg(M) {\Big)} \capstab {\Big(}\Crem^{\ast} \Berg^{i}(M^{\perp}) {\Big)}. \]
Since $\Crem^{\ast} \Berg^{i}(M^{\perp})$ is a positive Minkowski weight, by Corollary~\ref{capStabPositive}, we will be done if we can show that the Minkowski weight $\be_{i-1}(y_2, y_3, \ldots, y_{n-r}) \ast \Berg(M)$ is likewise positive. 
Thus, let $S_1 \subset S_2 \subset \cdots \subset S_{r-i}$ be a chain of proper nonempty subsets of $E$, our goal is to prove that the Minkowski weight $\be_{i-1}(y_2, y_3, \ldots, y_{n-r}) \ast \Berg(M)$ is nonnegative on $\Cone(S_{\bullet})$. 
Since $\be_{i-1}(y_2, y_3, \ldots, y_{n-r}) \ast \Berg(M)$ is supported on a subfan of $\Berg(M)$, it is enough to verify the case where the $S_i$ are flats of $M$, and we therefore change the name of $S_j$ to $F_j$.
Thus, the key to our proof is to give a formula for the Minkowski weight $\be_{i-1}(y_2, y_3, \ldots, y_{n-r}) \ast \Berg(M)$ on $\Cone(F_{\bullet})$.

We first need to define more notation. 
We shorten $r-i+1$ to $p$, so our chain of flats is $F_1 \subset F_2 \subset \cdots \subset F_{p-1}$.
Put $F_0 = \emptyset$ and $F_p = E$.
Let $E_j = F_j \setminus F_{j-1}$, so $E = E_1 \sqcup E_2 \sqcup \cdots \sqcup E_p$. 
Let $M_j$ be the matroid $M|{F_j}/F_{j-1}$; this is a matroid on the ground set $E_j$.
Put $n_j = \#(E_j)$ and $r_j = \rank(M_j)$, so $\sum n_j = n$ and $\sum r_j = r$. 

\begin{lemma} \label{keyLocalizationComputation}
Let $M$ be a connected matroid on $\geq 2$ elements.
With the above notations, the value of the Minkowski weight $\be_{i-1}(y_2, y_3, \ldots, y_{n-r}) \ast \Berg(M)$ on $\Cone(F_{\bullet})$ is
\[ \prod_{j=1}^{p-1}(-1)^{c(M_p)-1} \left( \Berg(M_j) \capstab \Berg^{n_j-2r_j+1}(M_j) \right) \times \omega(M_p) . \]
\end{lemma}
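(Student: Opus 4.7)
The plan is to prove the formula by localizing the computation to the star of $\Cone(F_\bullet)$ in $\Sigma_E$ and exploiting the product structure of that star. By \Cref{astIsLocal}, the value $(\be_{i-1}(y_2,\ldots,y_{n-r}) \ast \Berg(M))(\Cone(F_\bullet))$ is determined by the restrictions of $\Berg(M)$ and the $y_p$'s to cones containing $\Cone(F_\bullet)$. The star of $\Cone(F_\bullet)$ in $\Sigma_E$ is canonically isomorphic to the product fan $\prod_{j=1}^{q} \Sigma_{E_j}$, with $\Cone(F_\bullet)$ corresponding to the $q$-dimensional lineality space $\prod_j L_{\Sigma_{E_j}}$.

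I next identify the restrictions. The top-dimensional cones of $\Berg(M)$ containing $\Cone(F_\bullet)$ are indexed by complete flags of flats of $M$ refining $F_\bullet$, and such flags correspond bijectively to tuples of complete flags of flats in the $M_j$'s; consequently $\Berg(M)$ restricts to the product Minkowski weight $\prod_j \Berg(M_j)$. By \Cref{restrictXY}, $y_1^M$ restricts to $y_1^{M_q}$ on the factor $\Sigma_{E_q}$, and the list $\{y_2^M,\ldots,y_{n-r}^M\}$ restricts (after the relabelling dictated by that lemma) to the disjoint union of $\{y_2^{M_q},\ldots,y_{n_q-r_q}^{M_q}\}$ on $\Sigma_{E_q}$ together with the complete lists $\{y_1^{M_j},\ldots,y_{n_j-r_j}^{M_j}\}$ on $\Sigma_{E_j}$ for each $j<q$.

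Expanding the elementary symmetric function across factors as
\[ \be_{i-1}(y_2,\ldots,y_{n-r}) = \sum_{\vec k \,:\, \sum k_j = i-1} \be_{k_q}(y_2^{M_q},\ldots,y_{n_q-r_q}^{M_q}) \prod_{j<q} \be_{k_j}(y_1^{M_j},\ldots,y_{n_j-r_j}^{M_j}) \]
and using compatibility of $\ast$ with fan products, each summand becomes a product over $j$ of Minkowski weights $\be_{k_j}(\cdots) \ast \Berg(M_j)$ on $\Sigma_{E_j}$ of dimension $r_j - k_j$. Since we evaluate on the $q$-dimensional lineality space, each factor must have dimension exactly $\dim L_{\Sigma_{E_j}} = 1$, forcing the unique choice $k_j = r_j - 1$ for every $j$ (and indeed $\sum(r_j - 1) = r - q = i - 1$, so the multi-index is consistent). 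Applying \Cref{ChernIsThickBerg} to each $M_j$ with $j<q$ identifies the corresponding factor as $\Berg(M_j) \capstab \Berg^{n_j - 2r_j + 1}(M_j)$, while \Cref{OmegaFormula} identifies the $j=q$ factor with $\omega(M_q)$, up to a sign of $(-1)^{c(M_q)-1}$.

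The hardest step will be removing this residual sign. I expect one of two mechanisms: either connectedness of $M$ forces $c(M_q) = 1$ whenever the other factors do not already vanish, so the sign is trivial; or, for disconnected $M_q$, decomposing $M_q$ into its connected components and using multiplicativity of $\omega$ (\Cref{multiplicative}) together with the product structure of the Bergman fan shows the extra signs recombine into the sign-free product. A secondary detail is verifying the hypotheses of \Cref{OmegaFormula} for $M_q$ (coloop-freeness and $n_q\ge 2$) in all nontrivial cases, or checking that both sides of the lemma vanish when these fail.
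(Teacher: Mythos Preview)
Your approach is essentially identical to the paper's: both localize via \Cref{astIsLocal} to the star $\prod_j \Sigma_{E_j}$, use \Cref{restrictXY} to split the $y$-variables across factors, expand $\be_{i-1}$ multiplicatively, observe that only the multi-index $k_j=r_j-1$ survives for dimension reasons, and then identify the factors using \Cref{ChernIsThickBerg} (for $j<q$) and \Cref{OmegaFormula} (for $j=q$).  The paper's proof simply asserts $\be_{r_q-1}(y_2^{M_q},\ldots)\ast\Berg(M_q)=\omega(M_q)$ after noting $M_q=M/F_{q-1}$ is coloop-free, without discussing the sign $(-1)^{c(M_q)-1}$ that you flag; so your caution about that sign is not resolved by the paper's argument either, and the hypotheses $n_q\ge 2$ are likewise not explicitly checked there.
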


\begin{proof}
By Proposition~\ref{astIsLocal}, we can restrict our attention to the star of $\Cone(F_{\bullet})$ in $\Sigma_E$; this is isomorphic to $\prod_j \Sigma_{E_j}$. 
The images of the functions $y_q^M$ (for $1 \leq q \leq n-r$) under $\rho_{F_{\bullet}}$ 
are $y_{q'}^{M_j}$ for $1 \leq q' \leq n_j-r_j$, with the bijection between $[n-r]$ and $\bigsqcup_{j=1}^{r-i+1} [n_j-r_j]$ given by Lemma~\ref{restrictXY}.
In the particular case $q=1$, in the notation of Lemma~\ref{restrictXY} the index $b$ equals $k+1$, and $q'=q=1$.
This $k+1$ is the index of the final $E$ in the chain of subsets $F_{\bullet}$, which in the present notation is $p$;
that is, $y_1^M$ corresponds to $y_1^{M_p}$. 
We deduce that
\begin{multline*} \be_{i-1}(y_2^M, y_3^M, \ldots, y_{n-r}^M) = \\
 \sum_{s_1+s_2+\cdots+s_p= i-1} \left[ \left( \prod_{j=1}^{p-1}  \be_{s_j}(y_1^{M_j}, y_2^{M_j}, \ldots, y_{n_j-r_j}^{M_j}) \right) \times  \be_{s_p}(y_2^{M_p}, \ldots, y_{n_p-r_p}^{M_p}) \right]. \label{coproductE} \end{multline*}
 So the quantity that we want to compute is the sum of
\begin{equation}
 \prod_{j=1}^{p-1} \left(  \be_{s_j}(y_1^{M_j}, y_2^{M_j}, \ldots, y_{n_j-r_j}^{M_j}) \ast \Berg(M_j) \right) \times \left(  \be_{s_p}(y_2^{M_p}, \ldots, y_{n_p-r_p}^{M_p})  \ast \Berg(M_p) \right) \label{theSummand} \end{equation}
 over the index set $\{ (s_1, s_2, \ldots, s_p) : s_1+s_2+\cdots + s_p = i-1 \}$. Now, if we have $s_j > \dim \Berg(M_j) - \dim L_{\Sigma_{E_j}} = r_j-1$ for any index $j$, then the corresponding factor in eq.~\ref{theSummand} is $0$.
 But $\sum_{j=1}^p (r_j-1) = r - p = i-1$, so the only way to have $s_j \leq r_j-1$ for all $j$ is to have $s_j = r_j-1$ for all $j$. 
So our sum collapses to the single value
\[  \prod_{j=1}^{p-1} \left(  \be_{r_j-1}(y_1^{M_j}, y_2^{M_j}, \ldots, y_{n_j-r_j}^{M_j}) \ast \Berg(M_j) \right) \times \left(  \be_{r_p-1}(y_2^{M_p}, \ldots, y_{n_p-r_p}^{M_p})  \ast \Berg(M_p) \right).  \]

By Theorem~\ref{multIsStabIntersect} and Theorem~\ref{ChernIsThickBerg},
\begin{align*} 
&\mathrel{\phantom=}\be_{r_j-1}(y_1^{M_j}, y_2^{M_j}, \ldots, y_{n_j-r_j}^{M_j}) \ast \Berg(M_j) 
\\& = \left( \be_{r_j-1}(y_1^{M_j}, y_2^{M_j}, \ldots, y_{n_j-r_j}^{M_j}) \ast [\Sigma_{E_j}] \right) \capstab  \Berg(M_j) \\& = \Berg^{n_j-2r_j+1}(M_j) \capstab \Berg(M_j) , 
\end{align*}
giving the expected factor for $1 \leq j \leq p-1$. For the final factor, note that $M_p$ is $M/F_{p-1}$ and therefore $M_p$, like $M$, is co-loop free. Thus, Corollary~\ref{OmegaFormula} applies, and $\be_{r_p-1}(y_2^{M_p}, \ldots, y_{n_p-r_p}^{M_p})  \ast \Berg(M_p)$ is $(-1)^{c(M_p)-1} \omega(M_p)$. 
We have deduced the claim.
\end{proof}

We now conclude the proof of Theorem~\ref{OmegaPosReduction}. Since $\Berg^{n_j-2r_j+1}(M_j) \capstab \Berg(M_j)$ is an intersection of positive Minkowski weights, it is nonnegative.
The matroid $M_p$ is a quotient of $M$, so our hypothesis states that $\omega(M_p) \geq 0$. Thus, our product in Lemma~\ref{keyLocalizationComputation} is nonnegative. 
Our hypothesis that $M/F$ is connected for every proper $F$ means that, in particular, $M_p = M/F_{p-1}$ is connected, so $c(M_p) = 1$ and $(-1)^{c(M_p)-1}=1$.
We have established that $\be_{i-1}(y_2, y_3, \ldots, y_{n-r}) \ast \Berg(M)$ is a nonnegative Minkowski weight when $M/F$ is connected for all proper flats $F$.
As explained before, since $\Crem^{\ast} \Berg^{i}(M^{\perp})$ is likewise nonnegative, Theorem~\ref{KrisFormula} now implies that $g_i(M) \geq 0$. \hfill\qedsymbol

\begin{eg} \label{DisconnectedExample}
We add a basic example to illustrate what goes wrong when $M/F$ is disconnected. Let $M$ be the rank $3$ matroid on $[5]$ where $\{ 1, 2, 5 \}$ and $\{3, 4, 5 \}$ have rank $2$ and the points are otherwise general. This is a series-parallel matroid, so $g(t) = t$, and so $g_2(M) = g_3(M)=0$. Let us see what happens when we compute $g_2(M)$ using Theorem~\ref{KrisFormula} and Lemma~\ref{keyLocalizationComputation}. 

Theorem~\ref{KrisFormula} has two Minkowski weights: $(y_2+y_3) \ast \Berg(M)$ and $\Crem^{\ast} \Berg^{2}(M^{\perp})$. Call these $A$ and $B$.
The Minkowski weight $A$ is supported on the rays of the Bergmann fan. More specifically, it is $1$ on the rays corresponding to the flats $\{ 1,2,5 \}$ and $\{ 3,4,5 \}$ and $-1$ on the ray corresponding to the flat $\{ 5 \}$. Note that $M/\{ 5 \}$ is, indeed, disconnected, accounting for the minus sign. Note also that $(e_1+e_2+e_5) + (e_3+e_4+e_5) - e_5 = e_1+e_2+\cdots+e_5$ is in the lineality space of the Bergmann fan, so this Minkowski weight is balanced. To see that $A \capstab B = 0$, we will translate $A$ to miss $B$. Indeed, consider $A + (1,2,3,4,5)$, which is supported on the rays $(1+x,2+x,3,4,5+x)$, $(1,2,3+x,4+x,5+x)$ and $(1,2,3,4,5+x)$ for $x \geq 0$. Since $B$ is supported on the codimension one faces of the braid arrangement, the second and third ray of $A+(1,2,3,4,5)$ miss $B$ completely. The first ray intersects the  codimension one faces of the braid arrangement at $(2,3,3,4,6)$, $(3,4,3,4,7)$, $(4,5,3,4,8)$. None of these points are on $B$.

So Theorem~\ref{KrisFormula} gives $0$, as required, but the first Minkowski weight is not positive.
\end{eg}

\begin{remark}
The quantity $\Berg^{n_j-2r_j+1}(M_j) \capstab \Berg(M_j)$ played a key role in our proof. 
One combinatorial formula for $\Berg^{n-2r+1}(M) \capstab \Berg(M)$ is given in \cite[Corollary 6.16]{externalActivityPair}:
this intersection number is the number of bases of the diagonal Dilworth truncation $D(M,M)$
whose external activity statistic equals $(0, n-2r+1)$.
It would be interesting to further study the properties of this quantity and its relation to known matroid invariants.
\end{remark}

\addtocontents{toc}{\smallskip \textbf{Part 2:  Ferroni's formula and simplifications}}

\section{Schubert matroids}\label{SchubertMatroidsSection}
We return to reviewing background,
beginning by defining a collection of matroids, on a finite set $E$, which are called \newword{Schubert matroids}.
A Schubert matroid can be indexed by three kinds of data:
\begin{enumerate}
\item A total order $\le$ on $E$ and a subset $A$ of $E$.
\item A chain of subsets $\emptyset=S_0\subsetneq S_1\subsetneq \cdots \subsetneq S_{k}=E$ and a subset $A$ of $E$.
\item A chain of subsets $S_{\bullet}$ of $E$ as above and a tuple of nonnegative integers $(a_0, a_1, \ldots, a_k)$ with $a_0 = 0$, $a_k=r$ and 
\[ a_{i-1} \leq a_i \leq a_{i-1} + |S_i \setminus S_{i-1}| . \]
\end{enumerate}
We will denote the Schubert matroid as $\Omega_{\le, A}$, $\Omega_{S_{\bullet}, A}$ or $\Omega_{S_{\bullet}, a}$ correspondingly, allowing the type of the inputs to make clear which definition to apply.

Our first definition is taken from~\cite[Def 2.1]{Ferroni}. Let $\le$ be a total order on $E$.
This induces a partial order on the set of  subsets of~$E$ of any fixed size~$r$:
given two such subsets $A$, $B$, we say $A\le B$  if we have $a_i \leq b_i$ for all $1 \leq i \leq r$, where $A = \{ a_1, a_2, \ldots, a_r \}$ and $B = \{ b_1, b_2, \ldots, b_r \}$ with $a_1 \le a_2 \le \cdots \leq a_r$ and $b_1 \le b_2 \le \cdots \le b_r$.
An $r$-element subset $B$ of $E$ is defined to be a basis of $\Omega_{\le,A}$ if and only if $B \geq A$.

For our next definition, let $S_{\bullet}$ be a chain of sets with $\emptyset=S_0\subsetneq S_1\subsetneq \cdots \subsetneq S_{k}=E$ and let $A$ be a subset of $E$. Then an  $r$-element subset $B$ is a basis of $\Omega_{S_{\bullet}, A}$ if and only if 
\[ |B \cap S_i| \leq |A \cap S_i| \]
for each index $i$.

Finally, let $S_{\bullet}$ be a chain of subsets of~$E$ as above, and let $a = (a_0, a_1, \ldots, a_k)$ be a sequence of nonnegative integers obeying the inequalities above.
Then $B$ is a basis of $\Omega_{S_{\bullet}, a}$ if and only if $|B| = r$ and 
\[ |B \cap S_i| \leq a_i \ \text{for} \ 1 \leq i \leq k-1 . \]

\begin{prop}
The three definitions above define the same set of matroids.
\end{prop}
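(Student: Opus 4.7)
My plan is to establish the two equivalences (2)$\Leftrightarrow$(3) and (1)$\Leftrightarrow$(2), from which the three-way equivalence follows.

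For (2)$\Leftrightarrow$(3), the natural bijection to use is $a_i := |A \cap S_i|$. Given $(S_\bullet, A)$, these $a_i$ trivially satisfy the monotonicity inequalities of~(3), and the basis condition $|B \cap S_i| \leq |A \cap S_i|$ from~(2) is literally $|B \cap S_i| \leq a_i$ from~(3); the cases $i=0$ and $i=k$ omitted in~(3) reduce to the trivial inequalities $0 \leq 0$ and $|B|\leq r$. Conversely, given $(S_\bullet, a)$, I would build $A$ by picking, for each $i$, any $a_i - a_{i-1}$ elements of $S_i \setminus S_{i-1}$; the inequalities in~(3) guarantee this is possible and produce an $A$ with $|A \cap S_i|=a_i$.

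For (1)$\Rightarrow$(2), given $(\le, A)$, I would take $S_i$ to be the set of the first $i$ elements under $\le$. The standard reformulation of the Gale order says that $B \geq A$ as $r$-element subsets of $E$ if and only if $|B \cap S_i| \leq |A \cap S_i|$ for every $i$, which is exactly condition~(2) on this maximal chain; so the matroids coincide basis-by-basis.

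The only real subtlety is (2)$\Rightarrow$(1): given $(S_\bullet, A)$, I must exhibit a total order $\le$ whose Gale-order condition recovers~(2). My proposed construction is to take any total order refining $S_\bullet$ that places the elements of $A$ before the elements of $E \setminus A$ inside each block $S_j \setminus S_{j-1}$. Writing $T_i$ for the size-$i$ initial segment under this order, I expect a short case analysis on whether $i - |S_{j-1}|$ is at most $|A \cap (S_j \setminus S_{j-1})|$ to show that the two inequalities $|B \cap S_{j-1}| \leq |A \cap S_{j-1}|$ and $|B \cap S_j| \leq |A \cap S_j|$ from~(2) together imply $|B \cap T_i| \leq |A \cap T_i|$. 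The reverse direction is immediate since each $S_j$ occurs among the $T_i$. The key observation, and the step I expect will require the most care to state cleanly, is that loading $A$'s elements to the front of each block makes $|A \cap T_i|$ as large as possible subject to the refinement constraint, and this is exactly what makes the finer Gale inequalities follow from the coarser bounds in~(2).
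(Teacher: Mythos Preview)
Your proposal is correct and follows essentially the same approach as the paper. The paper organizes the argument as a cycle $(1)\to(2)\to(3)\to(1)$ rather than your two bi-implications, but the constructions are the same: in particular, the paper's $(3)\to(1)$ step chooses a total order refining $S_\bullet$ and takes $A$ to consist of the first $a_i-a_{i-1}$ elements of each block, which is exactly your ``load $A$'s elements to the front of each block'' construction in $(2)\Rightarrow(1)$. The paper asserts the resulting equality of matroids without proof, whereas you correctly sketch the two-case verification that the coarse inequalities at the $S_j$ imply the fine Gale inequalities at every $T_i$; that extra detail is genuine content the paper omits.
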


\begin{proof}
First, suppose that we start with a total order $e_1 < e_2 < \cdots < e_n$ on $E$ and a subset $A \subseteq E$. Let $S_i = \{ e_1, e_2, \ldots, e_i \}$. It is straightforward to check that $\Omega_{S_{\bullet}, A} = \Omega_{\leq, A}$.

Next, suppose that we start with a chain of sets $S_{\bullet}$ as above and a subset $A$ of $E$. Put $a_i = |A \cap S_i|$. We have $a_0 = |A \cap S_0| = |\emptyset| = 0$, $a_k = |A \cap S_k| = |A| = r$. Moreover, $a_i - a_{i-1} = |A \cap S_i| - |A \cap S_{i-1}| = |A \cap (S_i \setminus S_{i-1})|$, which must lie between $0$ and $|S_i \setminus S_{i-1}|$. So the vector $(a_0, \ldots, a_k)$ obeys the required inequalities, and it is immediate from the definitions that $\Omega_{S_{\bullet}, A} = \Omega_{S_{\bullet}, a}$. 

Finally, let $S_{\bullet}$ and $a=(a_0, \ldots, a_k)$ be a sequence of sets, and a sequence of nonnegative integers, obeying the required conditions. Let $\le$ be any total order on $E$ for which the $S_i$ are initial segments. Let $A_i$ be the $a_i - a_{i-1}$ first elements of $S_i \setminus S_{i-1}$ with respect to the order $\le$ and put $A = \bigcup A_i$.  Then $\Omega^{\le, A} = \Omega^{S_{\bullet}, a}$.
\end{proof}

\begin{remark}
None of our three choices of indexing data describes a Schubert matroid uniquely. 
If we use the indexing by pairs $(S_{\bullet}, a)$, and we further impose strict inequalities $a_{i-1} < a_i < a_{i-1} + |S_i \setminus S_{i-1}|$ for $1 \leq i \leq k$, then we obtain a unique label for each Schubert matroid. 
\end{remark}

We also make a triple of complementary definitions. We define $\Omega^{\le, A}$ to be the rank $r$ matroid where $B$ is a basis if $B \leq A$, so $\Omega^{\le, A} = \Omega_{\le_{\text{op}}, A}$, where $\le_{\text{op}}$ is the reversal of the order $\le$. 
Correspondingly, $\Omega^{S_{\bullet}, A}$ is $\Omega_{\overline{S}_{\bullet}, A}$ where $S_i = E \setminus S_{k-i}$, and
$\Omega^{S_{\bullet}, a} = \Omega_{\overline{S}_{\bullet}, \overline{a}}$ where $\overline{S}_i = E \setminus S_{k-i}$ and $\overline{a}_i = r - a_{k-i}$. 
With this, we describe the polyhedral  geometry of Schubert matroids. 
\begin{prop}
Let $S_{\bullet}$ and $a$ be a chain of sets and a nonnegative integer vector as above. Then the matroid polytopes of the corresponding Schubert matroids are given by
\begin{align*}
\Delta(\Omega_{S_{\bullet}, a}) &=  \Delta(r,E) \cap \bigcap_i \Big\{ \sum_{j \in S_i} z_j \leq a_i \Big\}, \\
\Delta(\Omega^{S_{\bullet}, a}) &=  \Delta(r,E) \cap \bigcap_i \Big\{ \sum_{j \in S_i} z_j \geq a_i \Big\} . 
\end{align*}
\end{prop}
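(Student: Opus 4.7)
The plan is to reduce both identities to a single claim---the first identity---then prove that one directly. For the reduction, I use the relation $\Omega^{S_\bullet, a} = \Omega_{\overline{S}_\bullet, \overline{a}}$ given just before the proposition, where $\overline{S}_i = E \setminus S_{k-i}$ and $\overline{a}_i = r - a_{k-i}$. Applying the first identity to $\Omega_{\overline{S}_\bullet, \overline{a}}$ yields
\[ \Delta(\Omega^{S_\bullet, a}) = \Delta(r,E) \cap \bigcap_i \Big\{ \sum_{j \in E \setminus S_{k-i}} z_j \leq r - a_{k-i} \Big\}. \]
Since any $z \in \Delta(r, E)$ satisfies $\sum_{j \in E} z_j = r$, each constraint $\sum_{j \in E \setminus S_{k-i}} z_j \leq r - a_{k-i}$ rearranges to $\sum_{j \in S_{k-i}} z_j \geq a_{k-i}$, recovering the second identity after reindexing $i \mapsto k-i$.

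For the first identity, write $P := \Delta(r, E) \cap \bigcap_i \{ \sum_{j \in S_i} z_j \leq a_i \}$. The inclusion $\Delta(\Omega_{S_\bullet, a}) \subseteq P$ is immediate: each basis $B$ of $\Omega_{S_\bullet, a}$ satisfies $\sum_{j \in S_i} (e_B)_j = |B \cap S_i| \leq a_i$ by definition, and $e_B \in \Delta(r, E)$, so passing to the convex hull gives the inclusion.

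The key step is the reverse inclusion, for which it suffices to check that every vertex of $P$ is $e_B$ for some basis $B$ of $\Omega_{S_\bullet, a}$. Since the chain $S_0 \subsetneq \cdots \subsetneq S_k$ is nested, I order $E$ so that each $S_i$ is an initial segment. In this order, the coefficient matrix of the constraint system defining $P$---consisting of the equation $\sum_j z_j = r$, the inequalities $\sum_{j \in S_i} z_j \leq a_i$, and the box constraints $0 \leq z_j \leq 1$ that cut out $\Delta(r,E)$ inside this hyperplane---is an \emph{interval matrix}: each row has its nonzero entries in a block of consecutive columns. Interval matrices are totally unimodular by a classical theorem of Hoffman--Kruskal (or equivalently Fulkerson--Gross). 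Combined with integer right-hand sides, total unimodularity forces all vertices of $P$ to have integer coordinates, and the box constraints then force each vertex into $\{0,1\}^E$. So each vertex is $e_B$ for some $B \subseteq E$ with $|B| = r$, and the remaining inequalities $|B \cap S_i| \leq a_i$ are exactly the condition for $B$ to be a basis of $\Omega_{S_\bullet, a}$, completing the proof.

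No step presents a serious obstacle: the main content is the classical total unimodularity of interval matrices, and the reduction between the two polytope descriptions is purely formal. (Alternatively, one could compute $\operatorname{rank}_{\Omega_{S_\bullet, a}}(S_i) = a_i$ and appeal to the H-description \eqref{HDescription} of matroid polytopes, but the TU route is shorter and avoids having to check that rank inequalities for arbitrary $S \subseteq E$ are implied by those for the chain $S_\bullet$.)
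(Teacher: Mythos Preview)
Your proof is correct. The paper actually states this proposition without proof, treating it as standard background; so there is no ``paper's own proof'' to compare against.

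Your argument via total unimodularity of interval matrices is clean and complete. The reduction of the second identity to the first using $\Omega^{S_\bullet,a}=\Omega_{\overline S_\bullet,\overline a}$ is exactly the right manoeuvre, and the TU step correctly handles the reverse inclusion: after ordering $E$ so that each $S_i$ is an initial segment, every constraint row (the all-ones row, the unit rows from the box constraints, and the chain rows) has its support on a consecutive interval, so the system is TU with integral right-hand side, forcing $0$--$1$ vertices. The alternative you mention in passing---computing $\rank_{\Omega_{S_\bullet,a}}(S)$ for all $S$ and invoking the $H$-description~\eqref{HDescription}---also works and is closer in spirit to how matroid theorists would think about it, but as you note it requires checking that the rank inequalities for arbitrary $S$ are redundant given those for the $S_i$, which is a small extra step your TU argument sidesteps entirely.
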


We prove some lemmas which will be useful later:
\begin{prop} \label{SchubertLoopColoopConnected}
Let $X = \{ x_1, x_2, \ldots, x_n \} \subseteq E$, ordered as $x_1 < x_2 < \cdots < x_n$. Let $A \subseteq X$. If $x_1 \not\in A$, then $x_1$ is a loop of $\Omega_{\leq, A}$. If $x_n \in A$, then $x_n$ is a co-loop of $\Omega_{\leq, A}$. If $x_1 \in A$ and $x_n \not\in A$, then $\Omega_{\leq, A}$ is connected. 
\end{prop}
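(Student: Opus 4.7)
The plan is to treat the three claims separately, all directly from the defining componentwise inequality $B \ge A$.

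For part (1), if $x_1\notin A$ then $a_1\ge x_2>x_1$, so every basis $B=\{b_1<\cdots<b_r\}$ satisfies $b_1\ge a_1>x_1$; hence $x_1$ lies in no basis, making it a loop. Part (2) is dual: if $x_n\in A$ then $a_r=x_n$, and any basis has $b_r\ge x_n$, forcing $b_r=x_n$, so $x_n$ lies in every basis and is a coloop. Both are one-line observations.

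The substantive part is (3). I would approach this by showing that every element of $E$ lies in the same matroid-connected component as $x_1$, by exhibiting basis exchanges out of the basis $A$ itself (note $A\ge A$ componentwise, so $A$ is a basis). The key calculation, which is essentially the only work, is the following exchange criterion: for $a_i\in A$ and $f\in E\setminus A$, the set $A-a_i+f$ is a basis of $\Omega_{\le,A}$ if and only if $f>a_i$. I would verify this by a brief case analysis on where $f$ inserts into the sorted order of $A\setminus\{a_i\}$; in every case, the only nontrivial inequality among $b_k\ge a_k$ occurs at the sorted position of $f$, and it reduces exactly to $f\ge a_i$.

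Granted the exchange criterion, the conclusion is immediate from the two hypotheses. Since $x_1=a_1$, every $f\in E\setminus A$ exceeds $x_1$, so each exchange $x_1\leftrightarrow f$ is legal and places $x_1$ in the same component as every element of $E\setminus A$. Dually, since $x_n\notin A$, every $a_i\in A$ is strictly less than $x_n$, so each exchange $a_i\leftrightarrow x_n$ is legal and places $x_n$ in the same component as every element of $A$. As $x_n\in E\setminus A$, chaining these two families links $x_1$ to every element of $E$, so $\Omega_{\le,A}$ is connected. The only real obstacle is the exchange characterization itself; after that the connectedness argument is just a combination of the two extremal hypotheses $x_1\in A$ (making $a_1$ minimal) and $x_n\notin A$ (making $x_n$ a maximal available exchange partner).
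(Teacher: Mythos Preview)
Your argument is correct. Parts (1) and (2) match the paper verbatim, and for (3) you follow the same strategy as the paper---fix a single basis and exhibit enough single-element exchanges to link every element of~$E$---with only cosmetic differences: you pivot from $A$ itself and package the exchanges into the clean criterion ``$A-a_i+f$ is a basis iff $f>a_i$'', whereas the paper pivots from $B_0=\{x_1,x_{n-r+1},\ldots,x_{n-1}\}$, checks the needed exchanges by hand, and phrases the conclusion as ``the edge directions span $\{\sum z_i=0\}$, so $\dim\Delta(\Omega_{\le,A})=n-1$''.
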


\begin{proof}
Let $A = \{ a_1, a_2, \ldots, a_r \}$ with $a_1 < a_2 < \cdots < a_r$. Let $B$ be any basis of $\Omega_{\leq, A}$ with $B = \{ b_1, b_2, \ldots, b_r \}$ and $b_1 < b_2 < \cdots < b_r$. 

Suppose that $x_1 \not\in A$. Then $a_1>x_1$, and we know that $b_1 \geq a_1$, so we conclude that $x_1 \not\in B$. We have shown that $x_1$ is not in any basis of $\Omega_{\leq, A}$, so $x_1$ is a loop.

Similarly, suppose that $x_n \in A$. Then $a_r = x_n$ and $b_r \geq a_r$ so $b_r = x_n$ as well. We have shown that $x_n$ is in every basis of $\Omega_{\leq, A}$, so $x_n$ is a coloop.

Finally, suppose that $x_1 \in A$ and $x_n \not\in A$. Put $B_0 = \{ x_1, x_{n-r+1}, x_{n-r+2}, \ldots, x_{n-2}, x_{n-1} \}$. So $B_0$ is a basis of $\Omega_{\leq, A}$. We note that the following sets are also bases of $\Omega_{\leq, A}$:
\[ \begin{array}{ll}
B_0 \setminus \{ 1 \} \cup \{ i \} & \text{for}\ 2 \leq i \leq n-r \\
B_0 \setminus \{ j \} \cup \{ n \} & \text{for}\ n-r+1 \leq j \leq n-1 \\
B_0 \setminus \{ 1 \} \cup \{ n \} & \\
\end{array} . \]
Therefore, the matroid polytope $\Delta(\Omega_{\leq, A})$ contains edges in the directions
\[ \begin{array}{ll}
e_{x_i} - e_{x_1} & \text{for}\ 2 \leq i \leq n-r \\
e_{x_n} - e_{x_j} & \text{for}\  n-r+1 \leq j \leq n-1 \\
e_{x_n} - e_{x_1} & \\ 
\end{array}. \]
These vectors span the vector space $\{ (z_1, z_2, \ldots, z_n) : \sum z_i = 0 \}$, so $\Delta(\Omega_{\leq, A})$  has dimension $n-1$ and $\Omega_{\leq, A}$ is connected.
\end{proof}

Recall that, for any covaluative invariant $v$, we define the valuative invariant $v^{\circ}$ by $v^{\circ}(M) = (-1)^{c(M)-1} v(M)$. 

\begin{cor} \label{OmegaValCoval}
Let $n \geq 2$. For any Schubert matroid $\Omega$ on $n$ elements, we have $\omega(\Omega) = \omega^{\circ}(\Omega)$.
\end{cor}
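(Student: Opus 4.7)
The approach is to verify the desired identity $\omega(\Omega) = (-1)^{c(\Omega)-1}\omega(\Omega)$ by a clean dichotomy: either both sides vanish outright, or else $\Omega$ turns out to be connected and so the sign is $+1$. The plan is therefore to case-split on whether $\Omega$ has a loop or a coloop, and then to invoke \Cref{SchubertLoopColoopConnected} to handle the remaining case.

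First I would handle the degenerate case. If $\Omega$ contains a loop or a coloop, then as recalled in the introduction $g_\Omega(t) = 0$, whence $\omega(\Omega) = g_r(\Omega) = 0$, and therefore $\omega^{\circ}(\Omega) = (-1)^{c(\Omega)-1}\cdot 0 = 0$ as well. Both invariants vanish and the identity holds trivially.

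Otherwise, $\Omega$ is both loop-free and coloop-free. Write $\Omega = \Omega_{\leq, A}$ and label the ground set as $x_1 < x_2 < \cdots < x_n$ with respect to this order. Since $x_1$ is not a loop, the first clause of \Cref{SchubertLoopColoopConnected} forces $x_1 \in A$; since $x_n$ is not a coloop, the second clause forces $x_n \notin A$. The third clause of the same proposition then yields that $\Omega$ is connected, so $c(\Omega) = 1$ and $(-1)^{c(\Omega)-1} = 1$, giving $\omega^{\circ}(\Omega) = \omega(\Omega)$.

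The entire content of the proof is already packaged in \Cref{SchubertLoopColoopConnected}, which tells us that the only possible loops and coloops of $\Omega_{\leq, A}$ live at the extremal elements $x_1$ and $x_n$ — so there is no substantive obstacle to overcome here, only a case split to organize.
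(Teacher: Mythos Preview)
Your proof is correct and follows essentially the same approach as the paper: use \Cref{SchubertLoopColoopConnected} to establish the trichotomy that a Schubert matroid has a loop, has a coloop, or is connected, and then observe that in the first two cases both sides vanish while in the third the sign is $+1$. The paper states this trichotomy directly, whereas you unpack it via the contrapositives on $x_1$ and $x_n$, but the argument is the same.
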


\begin{proof}
By Proposition~\ref{SchubertLoopColoopConnected}, either $\Omega$ has a loop, $\Omega$ has a coloop, or $\Omega$ is connected. In the first two cases, $\omega(\Omega) = 0$, so $\omega^{\circ}(\Omega)=0$ as well. In the last case, $c(\Omega) = 1$, so $\omega(\Omega) = \omega^{\circ}(\Omega)$.
\end{proof}

\section{Relations in the group of polyhedra} \label{GroupOfPolyhedraSection}
With $V$ and~$V^\vee$ as in Section~\ref{PiecewisePolynomialsSection}, 
let $\bOne(S):V^\vee\to\ZZ$ denote the indicator function of a set $S$. 
Let $\IndicatorGroup(V^\vee)$ denote the group generated by indicator functions of polyhedra in~$V^\vee$.
Let $P^\circ$ denote the relative interior of a polyhedron~$P$.

\begin{theorem}[{\cite[\S14]{McMullen}}]\label{McMullenEulerMap}
There is an endomorphism $\mbox{---}^*$ of $\IndicatorGroup(V^\vee)$, the \newword{Euler map}, with 
\[\bOne(P)^*= (-1)^{\codim P}\bOne(P^\circ)\]
for all polyhedra $P\subseteq V^\vee$.
\end{theorem}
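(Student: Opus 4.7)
My plan is to verify that the prescription $\bOne(P)^*=(-1)^{\codim P}\bOne(P^\circ)$ respects all $\mathbb Z$-linear relations among the generators of $\IndicatorGroup(V^\vee)$, so that it extends by linearity to an endomorphism. Since $\IndicatorGroup(V^\vee)$ sits inside $\mathbb Z^{V^\vee}$, its relations are simply pointwise equations of the form $\sum_i n_i\bOne(P_i)=0$, and I must deduce $\sum_i n_i(-1)^{\codim P_i}\bOne(P_i^\circ)=0$ from such an equation.

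I would first pick a polyhedral complex $\mathcal C$ in $V^\vee$ refining all the finitely many $P_i$ appearing in the relation. Since the relative interiors $\{C^\circ:C\in\mathcal C\}$ partition $V^\vee$, the family $\{\bOne(C^\circ)\}_{C\in\mathcal C}$ is $\mathbb Z$-linearly independent, and the expansions
\[\bOne(P_i)=\sum_{C^\circ\subseteq P_i}\bOne(C^\circ),\qquad \bOne(P_i^\circ)=\sum_{C^\circ\subseteq P_i^\circ}\bOne(C^\circ)\]
convert the hypothesis into the cell-by-cell conditions $\sum_{i:\,C\subseteq P_i}n_i=0$ for each $C\in\mathcal C$, and the desired conclusion into $\sum_{i:\,C^\circ\subseteq P_i^\circ}(-1)^{\codim P_i}n_i=0$ for each $C$.

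The bridge between the two is an Euler--Poincar\'e identity on open stars: for each $C\in\mathcal C$ and each $P_i$ with $C\subseteq P_i$,
\[\sum_{D\in\mathcal C:\,C\preceq D\subseteq P_i}(-1)^{\dim D-\dim C}\ =\ \begin{cases}(-1)^{\dim P_i-\dim C}, & C^\circ\subseteq P_i^\circ,\\ 0, & C^\circ\subseteq\partial P_i.\end{cases}\]
The left-hand side is the compactly supported Euler characteristic of the open star $\bigcup_{D\succeq C,\,D\subseteq P_i}D^\circ$, which in the first case is an open neighborhood of $C^\circ$ inside $P_i^\circ$ (homeomorphic to an open ball of dimension $\dim P_i$, hence has $\chi_c=(-1)^{\dim P_i}$) and in the second case is a ``half-neighborhood'' whose link at $C$ is contractible, yielding $\chi_c=0$. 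Multiplying the hypothesis for each $D\succeq C$ by $(-1)^{\dim D-\dim C}$ and swapping the order of summation produces exactly $\sum_{i:\,C^\circ\subseteq P_i^\circ}(-1)^{\dim P_i-\dim C}n_i=0$, which is the desired conclusion up to the $C$-dependent but nonzero overall sign $(-1)^{\dim V^\vee-\dim C}$ that converts $\dim P_i-\dim C$ into $\codim P_i$.

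The main obstacle is establishing the Euler--Poincar\'e identity when some $P_i$ is unbounded, since then the link at $C$ may fail to be compact. I would handle this by intersecting every $P_i$ with a sufficiently large compact box (using the valuation property $\bOne(P)+\bOne(Q)=\bOne(P\cup Q)+\bOne(P\cap Q)$ to rewrite the original relation entirely in terms of bounded polyhedra) and then invoking the classical identity in the bounded case, where it reduces to the statement that the reduced Euler characteristic of a sphere vanishes.
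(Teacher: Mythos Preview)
The paper does not prove this theorem; it is stated with a citation to McMullen \cite[\S14]{McMullen} and used as a black box. There is therefore no proof in the paper to compare your proposal against.

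Your overall strategy---check that the prescription respects every $\ZZ$-linear relation among the generating indicator functions, via a common polyhedral refinement and an Euler--Poincar\'e identity on the star of each cell---is the standard one and is correct in outline. One point deserves correction, however. Your proposed handling of the unbounded case is both unnecessary and flawed. It is unnecessary because the identity
\[\sum_{D\in\mathcal C:\,C\preceq D\subseteq P_i}(-1)^{\dim D-\dim C}=\begin{cases}(-1)^{\dim P_i-\dim C},&C^\circ\subseteq P_i^\circ,\\0,&C^\circ\subseteq\partial P_i\end{cases}\]
is purely local at a point $x\in C^\circ$: the cells $D$ with $C\preceq D\subseteq P_i$ are in bijection with the faces of the tangent cone of $P_i$ at~$x$ modulo $\Span(C-x)$, and this quotient cone is pointed whether or not $P_i$ is bounded. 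The identity then reduces to the Euler relation for the face poset of a pointed cone (interior case) or of a cone with a marked facet removed (boundary case). Your fix is flawed because one cannot express $\bOne(P)$ for an unbounded polyhedron $P$ as a finite $\ZZ$-combination of indicators of bounded polyhedra---for instance, $\bOne(\RR)$ is not such a combination---so there is no way to ``rewrite the original relation entirely in terms of bounded polyhedra'' using inclusion--exclusion with a box.
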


A matroid function is valuative if and only if it factors through the map $M\mapsto\Delta(M)$ to~$\IndicatorGroup(\RR^E)$ \cite[Theorem 3.5]{DerksenFink}.
In this way the Euler map exchanges valuations and covaluations:
if a valuation $v$ satisfies $v(M)=\widehat v(\bOne(\Delta(M)))$ for a map $\widehat v$ with domain $\IndicatorGroup(\RR^E)$,
then $v^\circ(M)=\widehat v(-\bOne(\Delta(M)^*))$.
The minus sign accounts for the $-1$ in the exponent in the definition $v^\circ(M) = (-1)^{c(M)-1} v(M)$.

Define a preorder on polyhedra in~$V^\vee$ by $P\precsim Q$ if $P$ is a Minkowski summand of a dilate of~$Q$,
in other words, the normal fan of~$Q$ refines a subfan of the normal fan of~$P$. 
Given such a pair $P\precsim Q$ and a face $F$ of~$Q$, 
define the \newword{tangent cone} to $P$ at~$F$ to be
\[\Tangent_FP = \bigcup_{\lambda\ge0} P+\lambda\cdot(Q-f),\]
where $f$ is a point of the relative interior of~$F$.
Note that we do not translate the minimal face of the cone $\Tangent_FP$ to contain the origin.
The minimal face of $\Tangent_FP$ instead contains the face of~$P$ whose normal cone contains the normal cone of~$F$.
We also remark that $\Tangent_P P = V^{\vee}$. 

\begin{theorem}[Inward Brianchon--Gram formula]\label{BrianchonGram}
Let $P\precsim Q$ be polyhedra in $V^\vee$. Then
\[\bOne(P) = \sum_{\textnormal{\scriptsize $F$ a face of $Q$}}(-1)^{\dim F}\, \bOne(\Tangent_F P)\]
holds in $\IndicatorGroup(V^\vee)$.
\end{theorem}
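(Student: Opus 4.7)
The plan is to prove the identity pointwise: for each $v\in V^\vee$, I will show $\bOne(P)(v)$ equals the alternating sum on the right. The key characterization is that $v\in\Tangent_F P$ iff $N_F Q\subseteq K_v$, where $N_F Q\subseteq V$ is the normal cone of~$F$ in~$Q$ and
\[K_v := \{u\in V : \langle u,v\rangle \le h_P(u)\},\qquad h_P(u):=\max_{p\in P}\langle u,p\rangle\]
is the support function of~$P$. This follows because $\Tangent_F P = P + T^{\mathrm{lin}}_F Q$, where $T^{\mathrm{lin}}_F Q := \Cone(Q-f)$ is the polar of $N_F Q$, so $v\in\Tangent_F P$ iff some $p\in P$ has $\langle u,v-p\rangle\le 0$ for all $u\in N_F Q$; the hypothesis $P\precsim Q$ forces $N_F Q$ to lie in a single normal cone of the normal fan of~$P$, on which $h_P$ is realized uniformly by any point of the corresponding face $P_F\subseteq P$.

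Since $u\mapsto\langle u,v\rangle - h_P(u)$ is linear minus convex, hence concave, the complement $K_v^c := V\setminus K_v$ is an open convex cone; it is empty precisely when $v\in P$. In the easy case $v\in P$ we have $K_v=V$, every face contributes, and the alternating sum collapses to $\sum_F (-1)^{\dim F} = \chi(Q) = 1$ (taking $Q$ bounded, as holds in the matroidal applications of this paper).

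The substantive case is $v\notin P$. Here the identity reduces to showing $\sum_{F\in U^c(v)}(-1)^{\dim F} = 1$, where $U^c(v) := \{F : N_F Q \cap K_v^c \ne\emptyset\}$ is a lower set in the face poset of~$Q$. The geometric realization $E_v := \bigcup_{F\in U^c(v)} F\subseteq\partial Q$ is the upper envelope of~$Q$ in the convex cone of directions $K_v^c$ (the union of faces of $Q$ that maximize some $u\in K_v^c$), and the above sum equals its topological Euler characteristic.

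The final step is showing $E_v$ is contractible, yielding $\chi(E_v)=1$. Fix any $u_0\in K_v^c$; the face $F_0$ of $Q$ maximizing $\langle u_0,\cdot\rangle$ lies in $E_v$ and is itself convex, hence contractible. The plan is to deformation retract $E_v$ onto $F_0$ by interpolating witness directions along segments in the convex cone $K_v^c$ and flowing along the gradient of $\langle u_0,\cdot\rangle$ on $\partial Q$. The main obstacle will be making this deformation retraction rigorous: one must ensure the retraction never enters the interior of $Q$ (whose normal cone $\{0\}$ is disjoint from the open cone $K_v^c$) and behaves continuously at face transitions across $\partial Q$.
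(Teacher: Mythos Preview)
Your approach is quite different from the paper's. The paper gives a three-line argument: when $P$ and $Q$ have the same normal fan this is the classical Brianchon--Gram theorem, cited from the literature; for general $P\precsim Q$ one replaces $P$ by $P+\varepsilon Q$, which has the same normal fan as~$Q$, and lets $\varepsilon\to0^+$. Your pointwise strategy---recasting $v\in\Tangent_F P$ as $N_FQ\subseteq K_v$ and then reducing to an Euler-characteristic computation---is sound in outline, and the reduction to $\chi(E_v)=1$ for $v\notin P$ is correctly derived (in particular your characterization of $\Tangent_F P$ via normal cones uses $P\precsim Q$ exactly where it must, to get a single $p\in P$ realizing $h_P$ on all of $N_FQ$).

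There is, however, a genuine gap, which you yourself flag: you do not prove that $E_v$ is contractible. The deformation retraction you sketch---flowing along the gradient of $\langle u_0,\cdot\rangle$ on $\partial Q$ while interpolating witness directions in the convex cone $K_v^c$---is not obviously well-defined, since the map $u\mapsto\{\text{face of }Q\text{ maximizing }u\}$ is only upper semicontinuous, and gradient flow on a polytope boundary is ill-behaved at faces of codimension $>1$. The contractibility is true (one route: the cones of the normal fan of~$Q$ meeting $K_v^c$ form a subfan whose support is star-shaped at~$0$, and its face poset is anti-isomorphic to that of~$E_v$; passing through order complexes gives the homotopy equivalence), but carrying this out is essentially a proof of the classical Brianchon--Gram theorem itself. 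The paper's perturbation argument sidesteps all of this by invoking that classical result directly.
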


\begin{eg} \label{LineSegmentEG}
Let $V^{\vee} = \RR$ and let $P$ be the line segment $[0,1]$. The inward Brianchon-Gram formula says that 
$\bOne([0,1]) = \bOne([0,\infty)\,) + \bOne(\,(-\infty, 1]) - \bOne(\,(-\infty, \infty)\,)$.
\end{eg}

\begin{proof}
When also $Q\precsim P$, i.e.\ $P$ and~$Q$ have the same normal fan, this is the Brianchon--Gram formula as usually stated \cite[\S1.1]{HaasePolar}.
The general case is obtained by replacing $P$ with $P+\varepsilon Q$, which has the same normal fan as~$Q$,
and then letting $\varepsilon\to0$ from above.
\end{proof}

We get a variant of the Brianchon--Gram formula by polarizing the tangent cones away from~$P$ rather than towards it.
Given a closed simplicial cone $C$, say $C=\bigcap_{i=1}^c H_i$ where the $H_i$ are transverse closed halfspaces,
let $\flip C=\bigcap_{i=1}^c(V^\vee\setminus H_i)$.
The set $\flip C$ is a translate of~$-C^\circ$, so $\bOne(\flip C)\in\IndicatorGroup(V^\vee)$ by Theorem~\ref{McMullenEulerMap}.

\begin{theorem}[Outward Brianchon--Gram formula]\label{BrianchonGramOut}
Let $P\precsim Q$ be polytopes in $V^\vee$. Then
\[\bOne(P) = \sum_{\mbox{\scriptsize $F$ a face of $Q$}}(-1)^{\codim F}\,\bOne(\flip\Tangent_F P)\]
holds in $\IndicatorGroup(V^\vee)$. 
\end{theorem}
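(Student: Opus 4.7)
My plan is to mirror the proof of the inward formula (\Cref{BrianchonGram}): first establish the outward identity when $P$ and~$Q$ share a normal fan (i.e.\ $P \precsim Q$ and $Q \precsim P$), then reduce the general case by replacing $P$ with $P + \varepsilon Q$ and letting $\varepsilon \to 0^+$, as done there. One may reduce to the case where $Q$ is full-dimensional in $V^\vee$.

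In the same-normal-fan case, $P$ and~$Q$ have combinatorially isomorphic face lattices. Index the facets of~$Q$ (equivalently of~$P$) by $i \in \mathcal{I}$ and write $H_i$ for the facet-defining closed half-space of~$P$ with index~$i$. For a face~$F$ of~$Q$, let $I_F := \{i : F \subseteq \text{facet } i \text{ of } Q\}$. Since the tangent cone $T_f Q$ at an interior point~$f$ of~$F$ is cut out by the defining inequalities of~$Q$ indexed by~$I_F$ translated so their boundaries pass through the origin, Minkowski addition yields
\[\Tangent_F P = P + T_f Q = \bigcap_{i \in I_F} H_i, \quad\text{and hence}\quad \flip \Tangent_F P = \bigcap_{i \in I_F} (V^\vee \setminus H_i).\]
A point $x \in V^\vee$ lies in $\flip \Tangent_F P$ precisely when $I_F \subseteq J_x := \{i : x \notin H_i\}$, so the outward formula reduces to the pointwise identity
\[\sum_{F \text{ face of } Q,\ I_F \subseteq J_x} (-1)^{\codim F} = \bOne(P)(x).\]

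I verify this pointwise. If $x \in P$, then $J_x = \emptyset$ forces $F = Q$, contributing $(-1)^{\codim Q} = 1 = \bOne(P)(x)$. If $x \notin P$, the decomposition $Q = \bigsqcup_F F^\circ$ into relative interiors shows that the faces~$F$ with $I_F \not\subseteq J_x$ are precisely the faces of the subcomplex $Q^{J_x^c} \subseteq \partial Q$ formed by the closed facets indexed by $J_x^c$. Additivity of Euler characteristic gives
\[\sum_{I_F \subseteq J_x}(-1)^{\dim F} = \chi(Q) - \chi(Q^{J_x^c}) = 1 - \chi(Q^{J_x^c}),\]
so the formula follows once $\chi(Q^{J_x^c}) = 1$ is established, after multiplying through by $(-1)^{\dim V^\vee}$ to convert $\dim$ to $\codim$.

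The crux is this topological claim. Since the face lattices of~$P$ and~$Q$ agree, $Q^{J_x^c}$ is combinatorially (hence topologically) isomorphic to the analogous subcomplex $P^{J_x^c} \subseteq \partial P$. The set $J_x^c$ consists of those facets of~$P$ on whose $P$-interior side~$x$ lies, so $P^{J_x^c}$ is the \emph{hidden} portion of $\partial P$ as seen from the external point~$x$, which is a closed $(\dim V^\vee - 1)$-disk by the classical central-projection argument. This contractibility is the main obstacle but is a standard fact about convex polytopes; once granted, the pointwise identity closes the same-normal-fan case, and the $\varepsilon \to 0^+$ limit extends it to the general $P \precsim Q$ as in \Cref{BrianchonGram}.
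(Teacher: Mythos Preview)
Your argument is correct but takes a genuinely different route from the paper's. The paper does not reprove the formula from scratch: it starts from the already-established inward formula (\Cref{BrianchonGram}), inserts the identity $1=\sum_{F\le G}(-1)^{\dim F}$ (Euler characteristic of the convex face~$G$), swaps the order of summation, and then recognizes the inner sum as the identity $\bOne(\flip C)=\sum_{G\le C}(-1)^{\codim G}\bOne(\Tangent_G C)$ applied to $C=\Tangent_F P$, using that the faces of $\Tangent_F P$ are indexed by the interval $[F,Q]$ with $\Tangent_G\Tangent_F P=\Tangent_G P$. This is a short, purely formal M\"obius-style manipulation. By contrast, you prove the outward identity directly by a pointwise count, which forces you to invoke the contractibility of the ``hidden'' boundary complex seen from an external point (a correct but external convexity fact), and then to rerun the $\varepsilon\to0$ limiting argument a second time. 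The paper's route is shorter and needs no topology beyond ``a convex set has Euler characteristic~1'', and it avoids repeating the limit since it works from the general inward formula; your route is more geometric and gives an independent explanation of why each point lands where it should.
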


\begin{eg}
We continue Example~\ref{LineSegmentEG}; the outward Brianchon-Gram formula in this case says that $\bOne([0,1]) = \bOne((-\infty, \infty)) - \bOne((-\infty, 0)) - \bOne((1,\infty))$. 
\end{eg}

\begin{proof}
Write $\le$ for the facial order on a polyhedron.
For any $F\le Q$, the faces of $\Tangent_FP$ are in bijection with the interval $[F,Q]$ of the face poset of~$Q$, and for $F\le G\le Q$ we have $\Tangent_G\Tangent_F P=\Tangent_GP$.
For $C=\bigcap_{i=1}^c H_i$ a simple cone as above, so that the minimal face of~$C$ has dimension $\dim V^\vee-c$,
we have
\begin{equation}\label{1_C^*}
\bOne(\flip C) = \sum_{G\le C}(-1)^{\codim G}\,\bOne(\Tangent_GC).
\end{equation}
Then Theorem~\ref{BrianchonGram} becomes
\begin{align*}
\bOne(P) &= \sum_{G\le Q}(-1)^{\dim G}\,\bOne(\Tangent_G P)
\\&= \sum_{G\le Q}\left(\sum_{F\le G}(-1)^{\dim F}\right)(-1)^{\dim G}\,\bOne(\Tangent_G P)
\\&= \sum_{F\le Q}(-1)^{\dim F}\sum_{G:F\le G\le Q}(-1)^{\dim G}\,\bOne(\Tangent_G\Tangent_F P)
\\&= \sum_{F\le Q}(-1)^{\dim F}(-1)^{-\dim V^\vee}\bOne(\flip\Tangent_F P).
\end{align*}
The second equality is because every face of $Q$ is contractible, and hence has Euler characteristic $1$. The final equality is  \eqref{1_C^*}.
\end{proof}

\section{Relations in the group of matroid polytopes} \label{DFRelations}

We will now specialize the formulas discussed in Section~\ref{GroupOfPolyhedraSection} to the case of matroid polytopes.
The role of the polytope $Q$ in Section~\ref{GroupOfPolyhedraSection} will be played by the permutahedron. 
The \newword{permutahedron} $\Pi_E$ is the Minkowski sum of the segments $\Hull\{e_i,e_j\}$ over all unordered pairs of distinct $i,j\in E$. 
The vertices of $\Pi_E$ are indexed by total orders $e_1 \le e_2 \le \cdots \le e_n$ on $E$; the vertex corresponding to such a total order is $v(\le) := \sum_i (n-i) e_i$.
The faces of $\Pi_E$ are indexed by chains of sets $\emptyset = S_0 \subsetneq S_1 \subsetneq \cdots \subsetneq S_k = E$; given such a chain $S_{\bullet}$,  
its corresponding face $F(S_{\bullet})$ is the convex hull of those vertices $v(\le)$ where each $S_i$ is an initial segment of $\le$. 
The normal fan of $\Pi_E$ is the fan $\Sigma_E$ from Section~\ref{PermutahedralFanSection}.

We will want the following notations:
We write $2^E$ for the set of all subsets of$E$.
Let $\mathcal P$ be a subset of $2^E$  with a minimum and maximum element $\hat{0}$ and $\hat{1}$.
We write $\Chains(\mathcal P)$ for the set of chains of subsets of~$E$ whose elements are in $\mathcal P$, including $\hat{0}$ and $\hat{1}$.
We number the elements of such a chain as $S_0 \subset S_1 \subset \cdots \subset S_k$ so that $S_0 = \hat{0}$ and $S_k = \hat{1}$. 
We write $|S_{\bullet}| = k$ and call $k$ the \newword{length} of the chain $S_{\bullet}$.

Let $M$ be a rank $r$ matroid on $E$. Then the normal fan of the matroid polytope $\Delta(M)$ is a coarsening of $\Sigma_E$
(\cite[Theorem 19]{Edmonds70}, \cite[Theorem 4.1]{GGMS}),
so $\Delta(M) \precsim \Pi_E$. 
The polytope $\Delta(M)$ will play the role of $P$ in Section~\ref{GroupOfPolyhedraSection}.
Thus, we need to compute $\Tangent_{F(S_{\bullet})} \Delta(M)$.

\begin{prop}
Let $M$ be a matroid and $S_{\bullet}$ a chain of sets as above. Then the inward tangent cone $\Tangent_{F(S_{\bullet})} \Delta(M)$ is cut out by the inequalities
\[  \sum_{j \in S_i} z_j  \leq \rank_M(S_i)    \ \text{for} \ 1 \leq i \leq k-1 \]
and  $\sum_j z_j = r$. 
The flipped tangent cone $\flip \left( \Tangent_{F(S_{\bullet})} \Delta(M) \right)$ is cut out by the inequalities
\[   \sum_{j \in S_i} z_j  > \rank_M(S_i)    \ \text{for} \ 1 \leq i \leq k-1 \]
together with $\sum_j z_j = r$.
\end{prop}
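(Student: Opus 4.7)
The plan is to exploit the identity $\Tangent_F P = P + C_F$, where $C_F := \bigcup_{\lambda \ge 0} \lambda(Q-f)$ is the tangent cone of $Q$ at $F$ translated so that $f$ becomes the origin; this is immediate by interchanging a union and a Minkowski sum in the definition of $\Tangent_F P$. Applied to $P = \Delta(M)$ and $Q = \Pi_E$, the problem splits into describing $C_{F(S_\bullet)}$ and then forming its Minkowski sum with $\Delta(M)$.

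First I would compute $C_{F(S_\bullet)}$. The permutahedron $\Pi_E$ is simple, with facets indexed by proper nonempty subsets $S \subsetneq E$; each such facet has inequality of the form $\sum_{i \in S} z_i \leq h(S)$, and it contains $F(S_\bullet)$ precisely when $S \in \{S_1, \ldots, S_{k-1}\}$. Only the tight facets persist in the tangent cone, giving
\[ C_{F(S_\bullet)} = \Big\{z : \sum_{i \in S_j} z_i \leq 0 \text{ for } 1 \leq j \leq k-1 \text{ and } \sum_{i} z_i = 0\Big\}. \]

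Next, letting $P^*$ denote the polyhedron on the right-hand side of the first formula in the proposition, I would prove $\Delta(M) + C_{F(S_\bullet)} = P^*$ by comparing support functions. The inclusion $\subseteq$ is clear by adding the defining inequalities of the two summands. For the reverse, $h_{A+B} = h_A + h_B$ splits the sum's support function. Since both $P^*$ and $\Delta(M) + C_{F(S_\bullet)}$ have recession cone $C_{F(S_\bullet)}$, whose polar cone is the normal cone $\Cone(S_\bullet)$, both support values equal $+\infty$ off $\Cone(S_\bullet)$. For $w = \sum_j \mu_j e_{S_j} + \nu e_E \in \Cone(S_\bullet)$ with $\mu_j \ge 0$, Edmonds' greedy theorem on the matroid polytope gives $h_{\Delta(M)}(w) = \sum_j \mu_j \rank_M(S_j) + \nu r$, attained at any basis $B$ with $|B \cap S_j| = \rank_M(S_j)$ for each $j$ (such bases exist by extending bases successively up the chain). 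Substituting into the defining inequalities of $P^*$ gives the same expression as an upper bound for $h_{P^*}(w)$, attained at $e_B$; together with $h_{C_{F(S_\bullet)}}(w) = 0$ on the polar cone, this gives equality of the two support functions and hence of the two closed convex sets.

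The flipped tangent cone description follows immediately from the definition of $\flip$: the $k-1$ inequalities indexed by $S_1, \ldots, S_{k-1}$ form the full list of facet inequalities of $\Tangent_{F(S_\bullet)}\Delta(M)$ and are mutually transverse modulo the lineality (since $e_{S_1}, \ldots, e_{S_{k-1}}, e_E$ are linearly independent because $S_\bullet$ is a strict chain), so $\flip$ reverses each weak inequality to its strict opposite while preserving the equality $\sum_i z_i = r$. The main technical step is the support-function comparison, which rests on Edmonds' rank-inequality presentation of the matroid polytope; I do not expect a substantive obstacle beyond invoking that standard result.
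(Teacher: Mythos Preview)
Your proof is correct. The paper states this proposition without proof, presumably regarding it as a routine consequence of the definitions together with Edmonds' description \eqref{HDescription} of the matroid polytope, so there is no argument to compare yours against. Your identity $\Tangent_F P = P + C_F$ (immediate from distributing the Minkowski sum over the union) is the right starting point, and your computation of $C_{F(S_\bullet)}$ via the facets of the simple polytope $\Pi_E$ containing $F(S_\bullet)$ is clean and accurate.

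One minor simplification: for the reverse inclusion $P^* \subseteq \Delta(M) + C_{F(S_\bullet)}$, you can exhibit the decomposition directly rather than comparing support functions. Take a basis $B$ of $M$ with $|B \cap S_j| = \rank_M(S_j)$ for each $j$ (which exists by greedily extending bases up the chain, as you already note). Then for any $z \in P^*$, setting $y = z - e_B$ gives $\sum_{i \in S_j} y_i = \sum_{i \in S_j} z_i - \rank_M(S_j) \le 0$ and $\sum_i y_i = r - r = 0$, so $y \in C_{F(S_\bullet)}$ and $z = e_B + y \in \Delta(M) + C_{F(S_\bullet)}$. This sidesteps the discussion of recession cones and support values at infinity, though your argument via support functions is of course valid and has the virtue of being a reusable template. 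The claim about the flipped cone is exactly right: simplicity of $\Pi_E$ guarantees the $k-1$ facet inequalities are transverse, so $\flip$ acts as you describe.
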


Thus,  Theorems~\ref{BrianchonGram} and~\ref{BrianchonGramOut} imply the formulas
\begin{prop} \label{BrianchonForMatroids}
We have the following relations in the group
$\IndicatorGroup((\RR^E)^{\vee})$:
\begin{align*} \bOne(\Delta(M)) &= \sum_{S_{\bullet} \in \Chains(2^E)} (-1)^{n-|S_{\bullet}|} \ \bOne\Big(\big\{ z \in (\RR^E)^{\vee} : \textstyle{\sum_{j \in S_i}} z_j  \leq \rank_M(S_i),\ \textstyle{\sum_j z_j = r} \big\}\Big)\\
&= \sum_{S_{\bullet} \in \Chains(2^E)} (-1)^{|S_{\bullet}|-1} \ \bOne\Big(\big\{ z \in (\RR^E)^{\vee} : \textstyle{\sum_{j \in S_i} z_j}  > \rank_M(S_i),\ \textstyle{\sum_j z_j} = r \big\}\Big) .  
\end{align*}
\end{prop}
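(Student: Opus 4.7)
The plan is to apply the inward and outward Brianchon--Gram formulas (\Cref{BrianchonGram} and \Cref{BrianchonGramOut}) with $P=\Delta(M)$ and $Q=\Pi_E$, working inside the affine hull $V^\vee=\{z\in\RR^E:\sum_j z_j=r\}$ so that $\Pi_E$ is a full-dimensional polytope in $V^\vee$. The hypothesis $\Delta(M)\precsim\Pi_E$ holds because the normal fan of $\Delta(M)$ is a coarsening of $\Sigma_E$, the normal fan of $\Pi_E$, as recalled in \Cref{PermutahedralFanSection}.

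The sum over faces of $\Pi_E$ in each Brianchon--Gram formula translates into a sum over $\Chains(2^E)$ via the bijection $S_\bullet\mapsto F(S_\bullet)$ given at the start of \Cref{DFRelations}. Since $F(S_\bullet)$ is a product of smaller permutahedra on the blocks $S_i\setminus S_{i-1}$ for $1\le i\le|S_\bullet|$, one computes $\dim F(S_\bullet)=n-|S_\bullet|$ and $\codim F(S_\bullet)=|S_\bullet|-1$ in $V^\vee$, producing the signs $(-1)^{n-|S_\bullet|}$ (inward) and $(-1)^{|S_\bullet|-1}$ (outward) appearing in the two formulas. The halfspace descriptions of $\Tangent_{F(S_\bullet)}\Delta(M)$ and of its flip are exactly what the proposition immediately preceding \Cref{BrianchonForMatroids} provides, so substituting those descriptions into the two Brianchon--Gram identities yields the two claimed equalities.

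The one nontrivial point is checking that \Cref{BrianchonGramOut} applies, i.e., that the tangent cone $\Tangent_{F(S_\bullet)}\Delta(M)$ is ``simplicial'' in the sense demanded by the definition of $\flip$. This is visible from the halfspace presentation: the cone is cut out in $V^\vee$ by the $|S_\bullet|-1$ halfspaces $\sum_{j\in S_i}z_j\le\rank_M(S_i)$ for $1\le i\le|S_\bullet|-1$, whose boundary functionals are linearly independent on $V^\vee$ because the $S_i$ form a strict chain strictly between $\emptyset$ and $E$. Apart from this mild check, the proof is pure bookkeeping; the substantive content sits in the preceding tangent-cone computation and in the Brianchon--Gram theorems themselves, so I expect no genuine obstacle.
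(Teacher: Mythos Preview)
Your proposal is correct and follows exactly the paper's approach: the paper simply states that the proposition is the result of plugging the tangent-cone description from the preceding proposition into \Cref{BrianchonGram} and \Cref{BrianchonGramOut}, and you have spelled out the bookkeeping (the face--chain bijection, the dimension count, and the simpliciality check needed for $\flip$) that the paper leaves implicit. Your explicit verification that $\Tangent_{F(S_\bullet)}\Delta(M)$ is simplicial is a point the paper glosses over but which is indeed needed for the outward formula.
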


We now intersect the formulas in Proposition~\ref{BrianchonForMatroids} with the polytope $\Delta(r,E)$; we will reencounter the Schubert matroids from Section~\ref{SchubertMatroidsSection}.
To this end, we make the following notations. 
For $M$ a matroid on $E$, and $S_{\bullet}$ a chain of sets in $E$ of the form $\emptyset = S_0 \subsetneq S_1 \subsetneq \cdots \subsetneq S_k = E$, we define $a_M(S_{\bullet})$ to be the vector $(a_0, a_1, a_2, \ldots, a_{k-1}, a_k)$ where $a_i = \rank_M(S_i)$. We define $\Omega_{S_{\bullet}, M}$ to be $\Omega_{S_{\bullet}, a_M(S_{\bullet})}$ for this vector $a_M(S_{\bullet})$.

We will also need a half-open version of the matroid polytope $\Delta(\Omega^{S_{\bullet}, M})$. 
We  define
\[{\halfopenpolytope}^{S_\bullet,a} = \Delta(\Omega^{S_\bullet,a}) \cap
\big\{x\in\mathbb R^E: \textstyle\sum_{j\in S_i}x_j > a_i \big\} \ \text{and} \  {\halfopenpolytope}^{S_\bullet,M} = {\halfopenpolytope}^{S_\bullet,a_M(S_{\bullet})}  .\]

\begin{prop} \label{BoundedBrianchonForMatroids}
Let $M$ be a rank $r$ matroid on $E$. With the above notations, we have the following relations in the group $\IndicatorGroup((\RR^E)^{\vee})$:
\begin{align} \bOne(\Delta(M)) &= \sum_{S_{\bullet} \in \Chains(2^E)} (-1)^{n-|S_{\bullet}|} \ \bOne(\Delta(\Omega_{S_{\bullet}, M}))  
\label{BoundedBrianchonInward}\\
&= \sum_{S_{\bullet} \in \Chains(2^E)} (-1)^{|S_{\bullet}|-1} \  \bOne(\halfopenpolytope^{S_\bullet,M}).
\label{BoundedBrianchonOutward}
\end{align}
\end{prop}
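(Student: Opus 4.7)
The plan is to obtain the two formulas in \Cref{BoundedBrianchonForMatroids} by multiplying the corresponding identities of \Cref{BrianchonForMatroids} by the indicator function $\bOne(\Delta(r,E))$ of the hypersimplex, and then identifying the intersected cones with the desired Schubert (or half-open Schubert) polytopes.

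First I would observe that every matroid polytope $\Delta(M)$ of a rank $r$ matroid on $E$ is contained in the hypersimplex $\Delta(r,E)$, since its vertices are $0/1$-vectors with coordinate sum $r$. Therefore $\bOne(\Delta(M))\cdot\bOne(\Delta(r,E))=\bOne(\Delta(M))$, and multiplying both identities of \Cref{BrianchonForMatroids} by $\bOne(\Delta(r,E))$ leaves the left-hand side unchanged. Passing the multiplication through the sums, we need only identify, for each chain $S_\bullet\in\Chains(2^E)$, the intersection of the unbounded region from \Cref{BrianchonForMatroids} with $\Delta(r,E)$.

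For \eqref{BoundedBrianchonInward}, intersecting $\bigl\{z\in(\RR^E)^\vee:\sum_{j\in S_i}z_j\le\rank_M(S_i)\text{ for }1\le i\le k-1,\ \sum_j z_j=r\bigr\}$ with $\Delta(r,E)$ yields
\[ \Delta(r,E)\cap\bigcap_{i=1}^{k-1}\bigl\{\textstyle\sum_{j\in S_i}z_j\le a_i\bigr\}, \]
where $a_i=\rank_M(S_i)$; by the description of Schubert matroid polytopes recalled in \Cref{SchubertMatroidsSection} this is exactly $\Delta(\Omega_{S_\bullet,M})$. For \eqref{BoundedBrianchonOutward}, intersecting the flipped cone $\bigl\{z:\sum_{j\in S_i}z_j>\rank_M(S_i),\ \sum_j z_j=r\bigr\}$ with $\Delta(r,E)$ yields $\Delta(r,E)\cap\bigcap_{i=1}^{k-1}\{\sum_{j\in S_i}z_j>a_i\}$; since the strict inequalities $\sum_{j\in S_i}z_j>a_i$ imply the weak ones $\sum_{j\in S_i}z_j\ge a_i$ defining $\Delta(\Omega^{S_\bullet,M})$, this intersection equals $\halfopenpolytope^{S_\bullet,M}$ by definition.

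I do not expect a serious obstacle here: everything reduces to unwinding definitions once the Brianchon--Gram identities of the previous section are in hand. The only minor point that deserves care is the outward case, where one must check that the flipped tangent cone to $\Delta(M)$ at $F(S_\bullet)$ is genuinely simplicial inside the affine hyperplane $\{\sum_j z_j=r\}$ so that \Cref{BrianchonGramOut} applies. This follows from counting: the minimal face of the tangent cone (inside that hyperplane) has codimension exactly $|S_\bullet|-1$, equal to the number of defining halfspaces coming from $S_1,\ldots,S_{k-1}$, so the cone is simplicial and the flip is well-defined.
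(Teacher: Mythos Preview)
Your proposal is correct and follows exactly the paper's approach: the paper's proof is the single sentence ``These are the formulae in \Cref{BrianchonForMatroids} with the indicator functions restricted to $\Delta(r,E)$,'' and you have simply spelled out what that restriction means termwise. Your closing remark about simpliciality of the tangent cones is a concern for \Cref{BrianchonForMatroids} rather than for this proposition, so it is not needed here (that proposition is already in hand), but it does no harm.
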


\begin{proof}
These are the formulae in Proposition~\ref{BrianchonForMatroids} with the indicator functions restricted to $\Delta(r, E)$.
\end{proof}

Formula~\eqref{BoundedBrianchonInward} is due to~\cite{DerksenFink}. Formula~\eqref{BoundedBrianchonOutward}, which will be more useful to us, is new to this paper. 

\section{Simplifying the sums to flats} \label{EightFormulaeSection}

Proposition~\ref{BoundedBrianchonForMatroids} gives two ways to express $\bOne(\Delta(M))$ as a sum over $\Chains(2^E)$. 
In this section, we will show how to rewrite  Proposition~\ref{BoundedBrianchonForMatroids} to sum over the smaller collection $\Chains(\mathcal{F}(M))$,
where $\mathcal{F}(M)$ is the set of flats of~$M$.
We recall the convention  from Section~\ref{DFRelations} that chains in $\Chains(\mathcal{P})$ always contain the minimum and maximum elements of $\mathcal{P}$.

Let $\mu_{\mathcal P}$ denote the M\"obius function on a poset $\mathcal P\subseteq 2^E$,
with bottom and top $\hat0,\hat1$.
We extend $\mu$ to chains multiplicatively.
If a chain $S_\bullet$ does not contain both $\hat0$ and $\hat1$, set $\mu_{\mathcal P}(S_\bullet)=0$.
Otherwise set
\[\mu_{\mathcal P}(S_\bullet) := \prod_{i=0}^{|S_\bullet|}\mu_{\mathcal P}(S_i,S_{i+1}).\]

We now give a simplification of each of the two right hand sides in Proposition~\ref{BoundedBrianchonForMatroids}. 
Of the two propositions, Proposition~\ref{DFFlatsOuter} will be the more useful one for us.

\begin{proposition}\label{DFFlatsOuter}
Let $M$ be a loop-free matroid.
We have the following formula for $\bOne(\Delta(M))$, where the sums run over chains of flats. 
\begin{equation}
\bOne(\Delta(M)) = \sum_{(F_0, F_1, \ldots, F_k) \in \Chains(\mathcal{F}(M))} (-1)^{k-1} \bOne(\halfopenpolytope(\Omega^{F_\bullet,M})) . \label{eqn:DFFlatsOuter}
\end{equation}
\end{proposition}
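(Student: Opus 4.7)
The plan is to verify \eqref{eqn:DFFlatsOuter} pointwise by comparing its right-hand side to that of \eqref{BoundedBrianchonOutward}, reducing to an identity of M\"obius functions. Both sides of \eqref{eqn:DFFlatsOuter} are indicator functions supported in $\Delta(r,E)$, so fix $x \in \Delta(r,E)$ and put
\[\cA(x) := \{ S \subseteq E : \textstyle\sum_{j\in S} x_j > \rank_M(S)\}.\]
One checks $\emptyset, E \notin \cA(x)$, and that $x \in \halfopenpolytope^{S_\bullet,M}$ iff every intermediate entry of $S_\bullet$ belongs to $\cA(x)$. Consequently the right-hand sides of \eqref{BoundedBrianchonOutward} and of \eqref{eqn:DFFlatsOuter} evaluated at $x$ are alternating sums over chains from $\emptyset$ to $E$ in the bounded posets
\[ P(x) := \{\emptyset\} \cup \cA(x) \cup \{E\}, \qquad P_F(x) := \{\emptyset\} \cup \bigl(\cA(x) \cap \cF(M)\bigr) \cup \{E\}\]
respectively. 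By Philip Hall's theorem each such sum equals $-\mu(\emptyset,E)$ of the corresponding poset. Since the first sum equals $\bOne(\Delta(M))(x)$ by \eqref{BoundedBrianchonOutward}, it suffices to show $\mu_{P(x)}(\emptyset,E) = \mu_{P_F(x)}(\emptyset,E)$.

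To prove this I introduce the map $\phi \colon P(x) \to P_F(x)$ given by $\phi(S) := \cl_M(S)$ for $S \in \cA(x)$, extended by $\phi(\emptyset)=\emptyset$ and $\phi(E)=E$. This is well-defined: if $S \in \cA(x)$ then $\rank_M(S) < r$ (otherwise $\sum_{j\in S} x_j \le r = \rank_M(S)$ in $\Delta(r,E)$), so $\cl_M(S) \ne E$, and the monotonicity
\[\textstyle\sum_{j \in \cl_M(S)} x_j \ \ge\ \textstyle\sum_{j \in S} x_j \ >\ \rank_M(S) \ =\ \rank_M(\cl_M(S))\]
places $\cl_M(S) \in \cA(x) \cap \cF(M)$. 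The map $\phi$ is extensive, order-preserving, and idempotent, hence a closure operator on $P(x)$ whose image is exactly $P_F(x)$.

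The conclusion follows from Quillen's fiber lemma applied to the restriction $\phi \colon \cA(x) \to \cA(x) \cap \cF(M)$. For any $F \in \cA(x) \cap \cF(M)$, the preimage $\phi^{-1}\!\bigl((\cA(x) \cap \cF(M))_{\le F}\bigr)$ equals $\cA(x)_{\le F}$, since $S \subseteq F$ is equivalent to $\cl_M(S) \subseteq F$ when $F$ is a flat. This order ideal has top element $F$, so its order complex is a cone and hence contractible. Quillen's lemma then furnishes a homotopy equivalence between the order complexes of $\cA(x)$ and of $\cA(x) \cap \cF(M)$, and hence equal reduced Euler characteristics, which by Philip Hall's formula is exactly the M\"obius function identity we need. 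The main obstacle is recognizing that the right framework for the cancellation is M\"obius theory on this pair of posets; once set up, the matroid-theoretic content reduces to the one-line monotonicity above and the remainder is standard poset topology.
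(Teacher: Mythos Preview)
Your proof is correct and takes a genuinely different route from the paper's. The paper argues by a sign-reversing involution directly on the index set of~\eqref{BoundedBrianchonOutward}: it fixes a total order on~$E$, and for a chain $S_\bullet$ containing a non-flat it toggles the set $S_{k-1}\cup\{x\}$ in or out (where $x$ is the $\prec$-maximal element lying in some $\cl_M(S_j)\setminus S_j$, and $k$ is minimal with $x\in S_k$), checking that this does not change the half-open polytope $\halfopenpolytope^{S_\bullet,M}$. All non-flat chains then cancel in pairs.

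You instead evaluate pointwise at $x\in\Delta(r,E)$, recognise both sums as $-\mu(\hat0,\hat1)$ of the bounded posets $P(x)$ and $P_F(x)$ via Philip Hall, and then invoke the closure-operator homotopy equivalence (or Quillen's fibre lemma) to match the M\"obius functions. This is cleaner conceptually and makes the role of the matroid closure transparent; it also generalises immediately to any closure operator on $\cA(x)$ satisfying the one monotonicity you isolate. The paper's toggling argument, by contrast, is more self-contained (no imported poset topology), and its style is reused for the harder companion result, Proposition~\ref{DFFlatsInner}, where a M\"obius weight appears and the toggling bookkeeping is more delicate. A minor remark: invoking Quillen is more than you need, since a closure operator on a finite poset already gives a strong deformation retraction onto its image; either way the Euler-characteristic consequence is the same.
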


\begin{proof}
We prove this formula from eq.~\ref{BoundedBrianchonOutward} by a \emph{toggling} argument.
We will define a partial involution $\iota$ on~$\Chains(2^E)$ which inserts or removes one element from a chain $S_\bullet$
without changing 
$\halfopenpolytope(\Omega^{S_\bullet,M})$.
Then all terms in \eqref{BoundedBrianchonOutward} 
whose indexing chain is in the domain of $\iota$ will cancel pairwise, and the remaining terms will be \eqref{eqn:DFFlatsOuter}.

Fix a total order $\prec$ on~$E$. We do not use $\prec$ as an argument to define Schubert matroids, only to choose what to toggle.

The domain of $\iota$ will be $\Chains(2^E)\setminus\Chains(\mathcal F(M))$.
Let $S_\bullet$ be a chain in the domain, i.e.\ not consisting entirely of flats of~$M$.
Let $x$ be the $\prec$-maximal element of~$E$ contained in $\cl_M(S_j)\setminus S_j$ for some~$j$,
where $\cl_M$ is the closure operation of~$M$.
Let $k$ be minimal such that $x\in S_k$.  
If $S_k=S_{k-1}\cup\{x\}$ then we define $\iota(S_\bullet)$ to be obtained from $S_\bullet$ by deleting $S_k$.
Otherwise we define $\iota(S_\bullet)$ to be given by inserting the new set $S_{k-1}\cup\{x\}$.
These are visibly inverses.
So we show that $\halfopenpolytope(S_\bullet,M)=\halfopenpolytope(\iota(S_\bullet),M)$. For concreteness, assume that we are in the case $S_k=S_{k-1}\cup\{x\}$, so $\iota(S_{\bullet})$ has one fewer set than $S_{\bullet}$ does.

The polytopes  $\halfopenpolytope(S_\bullet,M)$ and $\halfopenpolytope(\iota(S_\bullet),M)$ are both defined as the intersection of $\Delta(r,E)$ with certain open halfspaces. 
The difference is that the halfspace $\{ \sum_{j \in S_k} z_j > \text{rank}_M(S_k) \}$ is missing in the intersection defining  $\halfopenpolytope(\iota(S_\bullet),M)$. So what we need to show is that the inequality $\sum_{j \in S_k} z_j > \text{rank}_M(S_k)$  already holds everywhere on $\halfopenpolytope(\iota(S_\bullet),M)$, so this condition is redundant.
Indeed, on $\halfopenpolytope(\iota(S_\bullet),M)$, we have $\sum_{j \in S_{k-1}} z_j  > \text{rank}_M(S_{k-1})$. But, since $S_k = S_{k-1} \cup \{ x \}$ and $x \in \cl_M(S_{k-1})$, we have $\text{rank}_M(S_{k-1}) = \text{rank}_M(S_k)$, so $\sum_{j \in S_{k-1}} z_j  > \text{rank}_M(S_{k})$ on $\halfopenpolytope(\iota(S_\bullet),M)$. We have $\sum_{j \in S_{k}} z_j  \geq 
\sum_{j \in S_{k-1}} z_j$ on $\Delta(r,E)$, so we see that the inequality  $\sum_{j \in S_k} z_j > \text{rank}_M(S_k)$  already holds on $\halfopenpolytope(\iota(S_\bullet),M)$, as we want.
\end{proof}

The previous proposition used the half-open polytope from eq.~\ref{BoundedBrianchonOutward}.
We now give a variant using the closed polytopes in eq.~\ref{BoundedBrianchonInward}; this formula is more complicated (because of the M\"obius functions) and less useful.

\begin{proposition}\label{DFFlatsInner}
Let $M$ be a loop-free matroid.
We have the following formula for $\bOne(\Delta(M))$, where the sums run over chains of flats. 
\begin{equation}
 \bOne(\Delta(M)) = \sum_{(F_0, F_1, \ldots, F_k) \in \Chains(\mathcal{F}(M))} (-1)^k\,\mu_{\mathcal F(M)}(F_\bullet)\, \bOne(\Delta(\Omega_{F_\bullet,M})) . \label{eqn:DFFlatsInner}
 \end{equation}
\end{proposition}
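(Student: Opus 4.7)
The plan is to start from the already-established \Cref{BoundedBrianchonInward} and reorganize its sum over $\Chains(2^E)$ according to the $M$-closure data of each chain.

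The first step is a polytope-preservation observation: if $S_i \subsetneq S_{i+1}$ are consecutive sets in a chain $S_\bullet$ with $\cl_M(S_i) = \cl_M(S_{i+1})$, then the rank constraint for $S_i$ is implied on $\Delta(r,E)$ by the (equal-rank) constraint for $S_{i+1}$, since the $z_j$ are nonnegative. Thus $\Delta(\Omega_{S_\bullet, M})$ is unchanged by deleting $S_i$, and depends only on the ``essential chain'' $T_\bullet$ of $S_\bullet$ --- the subchain retaining the maximum element of each closure class. Partitioning $\Chains(2^E)$ by essential chain $T_\bullet$ (with $F_j := \cl_M(T_j)$), the chains $S_\bullet$ essentializing to $T_\bullet$ are exactly those obtained by inserting, in each Boolean interval $(T_{j-1}, T_j)$, a chain of ``extras'' $U$ with $\cl_M(U) = F_j$. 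The signed count of these chains factors across intervals, and by Hall's theorem yields
\[\sum_{S_\bullet \text{ ess.\ to } T_\bullet} (-1)^{n-|S_\bullet|} = (-1)^{n-|T_\bullet|} \prod_{j=1}^{|T_\bullet|} \bigl(-\mu_{P_j}(T_{j-1}, T_j)\bigr),\]
where $P_j := \{T_{j-1}, T_j\} \cup \{U \in 2^E : T_{j-1}\subsetneq U \subsetneq T_j,\ \cl_M(U) = F_j\}$.

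The proof then reduces to a two-part combinatorial lemma on the Möbius functions $\mu_{P_j}$.
\emph{(a)} If $T_\bullet$ is not a chain of flats --- i.e., some $T_j \subsetneq F_j$ --- then at least one factor $\mu_{P_j}$ is zero, killing the contribution of $T_\bullet$.
\emph{(b)} If $T_\bullet = F_\bullet$ is a chain of flats, the substitution $U \mapsto U \setminus F_{j-1}$ identifies $P_j$ with the ``spanning-subset poset'' $\{\emptyset, F_j\setminus F_{j-1}\} \cup \{\text{proper spanning subsets}\}$ of the minor $N_j := M|F_j/F_{j-1}$; applying a classical Möbius identity (provable via Weisner's theorem on $\mathcal F(N_j)$) gives
\[\mu_{P_j}(F_{j-1}, F_j) = (-1)^{|F_j| - |F_{j-1}| - 1}\,\mu_{\mathcal F(M)}(F_{j-1}, F_j).\]
Substituting these into the factorization and using that $\sum_j (|F_j| - |F_{j-1}|) = n$ collapses the signs to $(-1)^{|F_\bullet|}$, producing the claimed coefficient $(-1)^{|F_\bullet|}\mu_{\mathcal F(M)}(F_\bullet)$.

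The main obstacle is the vanishing statement (a). The cleanest route is probably a sign-reversing involution on the chains in $P_j$ contributing to Hall's sum: when $T_j \subsetneq F_j$, there is a ``missing'' element (for instance, the flat $F_j$ itself may lie in the Boolean interval above $T_j$ yet be excluded from $P_{j+1}$ because its closure is $F_j \ne F_{j+1}$) whose absence can be exploited to pair chains and force cancellation. Part (b), while classical, still demands careful sign-tracking and is where one explicitly identifies the relevant matroid minor.
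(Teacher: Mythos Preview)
Your outline is structurally the same proof as the paper's, with the two main steps reversed: you group $\Chains(2^E)$ by essential chain first and then argue that non-flat essential chains contribute zero, whereas the paper first runs a toggle on $\Chains(2^E)$ to kill exactly those contributions and only afterwards groups the survivors by their chain of closures $F_\bullet$. Your part (b) is the paper's per-interval computation via Rota's crosscut theorem; Weisner and crosscut give the same identity here.

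The gap is part (a). Your claim that some factor $\mu_{P_i}$ vanishes whenever $T_\bullet$ is not a chain of flats is correct, but the hint you give does not work: one cannot ``toggle'' the set $F_j$ in and out of Hall chains in $P_{j+1}$, since $F_j$ is not an element of $P_{j+1}$, and in any case $F_j$ need not even lie in the Boolean interval $[T_j,T_{j+1}]$. The involution that works toggles a single ground-set element, and it is precisely the paper's toggle localized to one interval. Pick $x\in F_j\setminus T_j$ and let $l$ be least with $x\in T_l$; then $x\in\cl_M(T_{l-1})$, so adding or removing $x$ preserves closure for every interior $U\in P_l$. The key asymmetry is that $T_{l-1}\cup\{x\}\notin P_l$ (its closure is only $F_{l-1}\neq F_l$) while $T_l\setminus\{x\}\in P_l$. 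On a Hall chain $T_{l-1}=C_0<\cdots<C_k=T_l$, find the first $C_i$ containing $x$ and either delete $C_{i-1}$ if $C_i=C_{i-1}\cup\{x\}$ or insert $C_i\setminus\{x\}$ otherwise; the asymmetry above guarantees this is a fixed-point-free sign-reversing involution, so $\mu_{P_l}(T_{l-1},T_l)=0$.
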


\begin{remark}
Hampe \cite[Theorem~3.12]{Hampe} is a variant of Proposition~\ref{DFFlatsInner}, where we sum only over cyclic flats.
\end{remark}

\begin{proof}
As in the previous proof, we use a toggling argument. This time, the domain of~$\iota$ will consist of those chains 
which do not contain the closure $\cl_M(S_i)$ of at least one of their members $S_i$.
Suppose that $S_{\bullet}$ is a chain which does not contain $\cl_M(S_i)$ for some $S_i$. 
Let $j$ be the index where $\text{rank}_M(S_i) = \text{rank}_M(S_j) < \text{rank}_M(S_{j+1})$. Then $\cl_M(S_i) = \cl_M(S_j)$, and there must be some $x \in \cl_M(S_j)$ which is not in $S_j$. Thus, there is some index $j$ and some element $x \in E$ such that $x \in \cl_M(S_j)\setminus S_j$ and $\rank_M(S_{j+1})>\rank_M(S_j)$.
Let $x$ be the $\prec$-maximal such element of $x$ such that such a $j$ exists and let $k$ be minimal such that $x \in S_k$.

If $S_k=S_{k-1}\cup\{x\}$, then we define $\iota(S_\bullet)$ by deleting $S_{k-1}$
(not $S_k$ as before).
Otherwise we take $\iota(S_\bullet)$ to be given by inserting $S_k\setminus\{x\}$.
Observe that, in the former case, $\rank_M(S_k)=\rank_M(S_{k-1})$;
therefore the set we delete is not the $S_j$ that witnessed the choice of~$x$,
and the elements that are candidates to be~$x$ are unchanged by these insertions or removals.
With this, the insertion and removal are seen to be inverses of each other.

The check that $\Omega_{S_\bullet, M}=\Omega_{\iota(S_\bullet), M}$ runs as it did in Proposition~\ref{DFFlatsOuter}.

Given a chain of sets  $S_{\bullet}$, let $F_{\bullet}$ be the chain of flats $\cl_M(S_0) \subseteq \cl_M(S_1) \subseteq \cdots \subseteq \cl_M(S_k)$, with duplicate entries deleted. 
We will write $F_{\bullet} \subseteq S_{\bullet}$ as shorthand to mean $\{ \cl_M(S_0), \cl_M(S_1), \cdots, \cl_M(S_k) \} \subseteq \{ S_0, S_1, \ldots, S_k \}$.
Our toggling argument shows that
\[ \bOne(\Delta(M))  = \sum_{\substack{ S_{\bullet} \in \Chains(2^E) \\ F_{\bullet} \subseteq S_{\bullet} }} (-1)^{n-|S_{\bullet}|}\, \bOne(\Delta(\Omega(S_{\bullet}, M))) . \]

We claim that, whenever $F_{\bullet} \subseteq S_{\bullet}$, then we have  $\Omega(S_{\bullet}, M) = \Omega(F_{\bullet}, M)$. 
Recall that $B$ is a basis of  $\Omega(S_{\bullet}, M)$ if and only if $|B \cap S_j| \leq \text{rank}_M(S_j)$ for all $j$. We need to show that the conditions corresponding to those $S_j$ which are not flats are redundant.
Indeed, suppose that $S_j$ is not a flat, and let $k$ be the index such that $\cl_M(S_j) = S_k$. Then $\text{rank}_M(S_j) =  \text{rank}_M(S_k)$. However, since $S_j \subseteq S_k$, we have $|B \cap S_j| \leq |B \cap S_k|$, so the inequality coming from $S_k$ is more restrictive. This concludes the verification that  $\Omega(S_{\bullet}, M) = \Omega(F_{\bullet}, M)$. 
Thus, we can rewrite our sum as
\[\sum_{\substack{ S_{\bullet} \in \Chains(2^E) \\ F_{\bullet} \subseteq S_{\bullet} }} (-1)^{n-|S_{\bullet}|}\, \bOne(\Delta(\Omega(F_{\bullet}, M))) . \]

The coefficent of  $\bOne(\Delta(\Omega(F_{\bullet}, M)))$ in the above sum is the sum of $(-1)^{n-|S_{\bullet}|}$ where the sum is over all chains of sets $S_{\bullet}$ such that $F_{\bullet} \subseteq S_{\bullet}$ and $\cl_M(S_j) \in F_{\bullet}$ for each set $S_j$. 
To obtain such a chain of sets $S_{\bullet}$, one starts with the chain of flats $F_{\bullet}$ and then, between $F_{i-1}$ and $F_i$, inserts a chain of sets $S$ such that $F_{i-1} \subsetneq S \subsetneq F_i$ and $\cl_M(S) = F_i$. In other words, $S \setminus F_{i-1}$ should be a proper spanning set in the matroid $M|{F_i} / F_{i-1}$. 
The insertions between each $F_{i-1}$ and $F_i$ can be chosen independently, so we get that the coefficient of $\bOne(\Delta(\Omega(F_{\bullet}, M)))$ is the product over the contributions from each $(F_{i-1}, F_i)$ pair. 
Splitting the factors of $-1$ according to
\[n-|S_\bullet| = \sum_{i=1}^{|F_\bullet|}|F_i|-|F_{i-1}|-\#\{T\in S_\bullet:F_{i-1}\subsetneq T\subseteq F_i\},\]
we want to compute
\[ \sum_{\substack{F_{i-1} \subsetneq T_1 \subsetneq T_2 \subsetneq \cdots T_q \subsetneq F_i \\ \text{Span}(T_j) = F_i }} (-1)^{|F_i|-|F_{i-1}|-(q+1)} . \]
Let $T_{q+1}$ refer to $F_i$.
If $\text{Span}(T_1)=F_i$ then $T_1\subseteq T_j\subseteq F_i$ implies $\text{Span}(T_j)=F_i$ for all~$j$. So we may rewrite the last sum as
\begin{align*}
&\mathrel{\phantom=}
\sum_{\substack{F_{i-1} \subsetneq T_1 \subseteq F_i \\ \text{Span}(T_1) = F_i }} (-1)^{|F_i|-|F_{i-1}|-1} 
  \sum_{T_1 \subsetneq T_2 \subsetneq \cdots T_q \subsetneq F_i} (-1)^{q} \\
&= \sum_{\substack{F_{i-1} \subsetneq T_1 \subseteq F_i \\ \text{Span}(T_1) = F_i }} (-1)^{|F_i|-|F_{i-1}|-1}
  \cdot (-1)^{|F_i|-|T_1|} \\
&= -\sum_{\substack{F_{i-1} \subsetneq T_1 \subseteq F_i \\ \text{Span}(T_1) = F_i }} (-1)^{|T_1|-|F_{i-1}|}.
\end{align*}
By Rota's crosscut theorem \cite[Theorem 3.1.9]{KungRotaYan}, 
this is $-\mu(M|{F_i} / F_{i-1})$. Putting all the pieces together, we get formula~\eqref{eqn:DFFlatsInner}.
\end{proof}

For any valuative matroid invariant $v$, we can apply $v$ to both sides of the formulas in Proposition~\ref{DFFlatsOuter} or~\ref{DFFlatsInner} to obtain a formula for $v$ on an arbitrary matroid in terms of the values of $v$ on Schubert matroids. We will now discuss what happens when we apply this strategy to the $\omega$ invariant.

\section{Ferroni paths}

We want to understand the values the $\omega$ invariant takes on Schubert matroids. A formula for this was given by Ferroni~\cite{Ferroni}. 
We will give a different formulation of Ferroni's result; in the proof of Theorem~\ref{FerroniClosed} we explain how it relates to Ferroni's original formulation.

Let $\emptyset = S_0 \subset S_1 \subset \cdots \subset S_k = E$ be a flag of sets and let $(a_0, a_1, \ldots, a_k)$ be a sequence of nonnegative integers with $a_0 = 0$, $a_k = r$ and $a_i \leq a_{i+1} \leq a_i + |S_{i+1} \setminus S_i|$.
Define $\verts(S_{\bullet}, a_{\bullet})$ to be the set of lattice points $\{ (|S_j|-a_j, a_j) : 1 \leq j \leq k-1 \}$. 
Fix a matroid structure on $E$.
For any subset $S$ of $E$, we define $p_M(S)$ to be the lattice point $(|S| - \rank(S), \rank(S))$, so, for any chain $S_{\bullet} = (S_0, S_1, \ldots, S_k)$, we have $$\verts(S_{\bullet}, a(S_{\bullet})) = (p_M(S_1), p_M(S_2), \ldots, p_M(S_{k-1})).$$
If the matroid is clear from context, we drop the subscript $M$ in the notation. 

A \newword{Ferroni path} is a lattice path in~$\mathbb Z^2+(\frac12, \frac12)$ whose steps are $(1,1)$ or~$(1,0)$.
Ferroni paths are the case of the Delannoy paths of~\cite{Ferroni} without $(0,1)$ steps.
We deviate from \cite{Ferroni} in taking the vertices of Delannoy paths to be half-integer points, while taking the points of $\verts(S_{\bullet}, a_{\bullet})$ to be integer points.
We do this in the interests of symmetry and the ease of stating the conventions distinguishing the next two theorems.

\begin{theorem}[{\cite[Theorem 3.4]{Ferroni}}]\label{FerroniClosed}
For $S_{\bullet}$ and $a_{\bullet}$ as above, $\omega(\Omega_{\leq, A})$  equals the number of Ferroni paths from $(\frac12, \frac12)$ to~$(n-r-\frac12,r-\frac12)$
that lie strictly below the points $\verts(S_{\bullet}, a_{\bullet})$. 
\end{theorem}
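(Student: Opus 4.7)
My plan is to derive the statement from Ferroni's \cite[Theorem 3.4]{Ferroni}, of which it is essentially a repackaging into cleaner coordinates. I would start by recalling Ferroni's original formulation, which expresses $\omega(\Omega_{\leq,A})$ as the cardinality of a set of Delannoy paths (with steps in $\{(1,0),(0,1),(1,1)\}$) satisfying boundary conditions and staircase constraints derived from the indexing pair $(S_\bullet,a_\bullet)$.

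The first reduction is to show that paths containing a $(0,1)$ step do not contribute. If a Delannoy path has $h$ steps of type $(1,0)$, $v$ of type $(0,1)$, and $d$ of type $(1,1)$, then the prescribed horizontal and vertical displacements give $h+d=n-r-1$ and $v+d=r-1$. Ferroni's total step-count condition implicit in his enumeration then forces $v=0$, so the surviving paths are exactly those with step set $\{(1,0),(1,1)\}$, i.e.\ Ferroni paths. (Alternatively, one can give an involutive cancellation argument pairing a path containing a $(0,1)(1,0)$ substring against its modification replacing this by $(1,1)$, provided one checks that the strict-below staircase constraint is preserved under the swap.)

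The second reduction is the coordinate translation. Shifting everything by $(\tfrac12,\tfrac12)$ moves path vertices to half-integer points while leaving the constraint points $(|S_j|-a_j,a_j)$ at integer positions. Under this shift, Ferroni's weak inequality ``path passes weakly below the integer lattice point $p_M(S_j)$'' becomes our strict inequality ``path passes strictly below $p_M(S_j)$,'' since a half-integer point $(x+\tfrac12,y+\tfrac12)$ is coordinatewise strictly less than an integer point $(X,Y)$ iff $x\le X-1$ and $y\le Y-1$. The main obstacle I anticipate is matching Ferroni's boundary conventions in corner cases, particularly when several $p_M(S_j)$ coincide or sit on a common diagonal, so that the strict/weak inequalities and endpoint conventions line up point-for-point; once the dictionary is pinned down, the equivalence is purely a routine coordinate change.
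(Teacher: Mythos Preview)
Your proposal is broadly on the right track but differs in emphasis from the paper's proof, and one of your reductions rests on a questionable reading of Ferroni's statement.

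The paper's proof is shorter and more geometric. It does not discuss a Delannoy-to-Ferroni reduction at all; it takes as given that Ferroni's Theorem~3.4 already counts Ferroni paths (those with steps $(1,0)$ and $(1,1)$) lying strictly below a certain staircase path $\path(A)$ built from~$A$. The single substantive observation in the paper's proof is that the points of $\verts(S_\bullet,a_\bullet)$ are exactly the lower-right corners of $\path(A)$, and that a monotone path lies strictly below a staircase if and only if it lies strictly below each of that staircase's lower-right corners. That is the whole argument.

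Your first reduction (eliminating $(0,1)$ steps by a step-count argument) may be doing work that is already packaged inside Ferroni's theorem as cited; if Ferroni's formula for the full $g$-polynomial is in terms of Delannoy paths graded by number of diagonal steps, then extracting the $t^r$ coefficient does force $d=r-1$ and hence $v=0$ as you say, but you should verify this against Ferroni's actual statement rather than leave it ``implicit.'' Your second reduction mischaracterizes the translation: Ferroni's constraint is ``strictly below a staircase path,'' not ``weakly below a set of lattice points,'' so the weak-to-strict conversion via the $(\tfrac12,\tfrac12)$ shift is not the right framing. Moreover, ``path strictly below a point'' is not the same as ``a vertex of the path is coordinatewise less than the point,'' so your iff statement about half-integer versus integer points does not directly capture the condition. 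The cleaner route is the one the paper takes: identify $\verts(S_\bullet,a_\bullet)$ as the lower-right corners of $\path(A)$ and invoke the obvious staircase-versus-corners equivalence.
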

By Corollary~\ref{OmegaValCoval}, this formula also gives the value of $\omega^{\circ}(\Omega_{\leq, A})$. 

\begin{proof}
Ferroni works with $\Omega_{\leq, A}$ rather than $\Omega_{S_{\bullet}, a_{\bullet}}$, and he counts Ferroni paths lying below a certain piecewise linear path from $(0,0)$ to $(n-r, r)$ rather than lying below a set of vertices. 
We explain how to translate both issues.

We recall the recipe that, given $(S_{\bullet}, a_{\bullet})$, computes a pair $(\leq, A)$ such that $\Omega_{S_{\bullet}, a_{\bullet}} = \Omega_{\leq, A}$: We choose any total order $\leq$ for which the $S_i$ are initial segments.
We take $A_i$ to consist of the first $a_i - a_{i-1}$ elements of $S_i \setminus S_{i-1}$ and let $A = \bigsqcup_i A_i$. 

Ferroni defines a path $\path(A)$ from $(0,0)$ to $(n-r, r)$ by having the $i$-th step of $\path(A)$ travel in direction $(0,1)$ if $i \in A$ and in direction $(1,0)$ otherwise; his result is that $\omega(\Omega_{\leq, A})$ is the number of Ferroni paths strictly below $\path(A)$.  The points $\verts(S_{\bullet}, a_{\bullet})$ are the lower right corners of path $\path(A)$;  a Ferroni path $F$ thus lies below $\path(A)$ if and only if it lies below $\verts(S_{\bullet}, a_{\bullet})$. 
\end{proof}

\begin{eg}\label{FerroniEx}
We consider the rank $4$ Schubert matroid on the ground set $[10]$ described as $\Omega_{(\emptyset, [2], [7], [10]), (0,1,3,4)} = \Omega_{\leq, \{1,3,4,8\}}$, where $\leq$ is the standard order on $[10]$. Then \[\verts((\emptyset, [2], [7], [10]), (0,1,3,4))  = \{  (1,1), (4,3) \}.\]
Figure~\ref{FerroniExFig} shows these vertices as dots. 
The Ferroni paths travel along the dashed lines. 
Of the ten Ferroni paths from $(\frac12, \frac12)$ to $(5\frac12,3\frac12)$, 
just three are strictly below this Young path, 
namely the three that pass through $(1\frac12,\frac12)$ and $(4\frac12,2\frac12)$.
This shows that $\omega(\Omega_{\leq, \{1,3,4,8\}}) = 3$. 
\end{eg}
\begin{figure}[htb]
\includegraphics{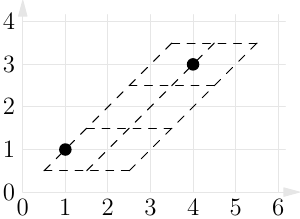}
\caption{The Ferroni paths  and lattice points $\verts(S_{\bullet}, a_{\bullet})$ from  Example~\ref{FerroniEx}.}\label{FerroniExFig}
\end{figure}

\begin{cor} \label{FerroniInward}
Let $M$ be any rank $r$ matroid on $E$. Then
\begin{align*}
\omega^{\circ}(M) =& \sum_{S_{\bullet} \in \Chains(\mathcal{S}(M))} \!\!\! \!\!\! \!\!\! (-1)^{|S_{\bullet}|-1}\, {}^\#\{ \textup{Ferroni paths strictly below $\verts(S_{\bullet}, a_M(S_{\bullet}))$} \} \\
=& \sum_{F_{\bullet} \in \Chains(\mathcal{F}(M))} \!\!\! \!\!\! \!\!\! (-1)^{|F_{\bullet}|-1}\,\mu_{\mathcal F(M)}(F_\bullet) {}^\#\{ \textup{Ferroni paths strictly below $\verts(F_{\bullet}, a_M(F_{\bullet}))$} \} .
\end{align*}
\end{cor}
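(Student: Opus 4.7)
The plan is to apply the invariant $\omega^\circ$ to each of the two identities in \Cref{DFFlatsOuter} and \Cref{DFFlatsInner}. Since $\omega$ is covaluative, $\omega^\circ$ is valuative in the sense of \cref{ValuativeEquation}; by the factoring result \cite[Theorem 3.5]{DerksenFink} cited at the start of \Cref{GroupOfPolyhedraSection}, it extends to a group homomorphism $\hat{\omega^\circ}\colon\IndicatorGroup(\RR^E)\to\ZZ$. Both identities are equalities in $\IndicatorGroup(\RR^E)$, so we may evaluate $\hat{\omega^\circ}$ termwise.

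For the M\"obius-weighted formula, I would apply $\hat{\omega^\circ}$ to \Cref{DFFlatsInner}. Each summand becomes $\omega^\circ(\Omega_{F_\bullet,M})$. By \Cref{SchubertLoopColoopConnected} every Schubert matroid is either connected or contains a loop or coloop, so \Cref{OmegaValCoval} lets us replace $\omega^\circ$ by $\omega$ on these summands (both sides vanishing in edge cases). Ferroni's formula, \Cref{FerroniClosed}, then rewrites each $\omega(\Omega_{F_\bullet,M})$ as the stated count of Ferroni paths strictly below $\verts(F_\bullet,a_M(F_\bullet))$, giving the second identity.

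For the unweighted formula, the same strategy applies to \Cref{DFFlatsOuter}, except that now the summands are the half-open indicators $\bOne(\halfopenpolytope^{F_\bullet,M})$. Here the main obstacle is evaluating $\hat{\omega^\circ}$ on these, since $\halfopenpolytope^{F_\bullet,M}$ is not itself a matroid polytope. I would decompose it by inclusion-exclusion over the removed faces $\Delta(\Omega^{F_\bullet,M})\cap\{\sum_{j\in F_i}x_j=a_i\}$, using the observation that for any subset of indices $I\subseteq\{1,\dots,k-1\}$ the intersection of the corresponding faces is the matroid polytope of a direct sum of smaller Schubert matroids, obtained by splitting the chain $F_\bullet$ at the indices in $I$. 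Each of these is then handled by combining \Cref{multiplicative} with Ferroni's formula, producing a product of Ferroni path counts.

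The technical heart of the argument is then a combinatorial identity: showing that the signed sum over $I$ of products of Ferroni path counts on the pieces collapses, via a path-splitting/gluing bijection indexed by where a path crosses each $\verts$-point, into the single unweighted count of Ferroni paths strictly below $\verts(F_\bullet,a_M(F_\bullet))$ appearing in the corollary. I expect this telescoping to be the only nontrivial step; once it is in hand, collecting signs turns \Cref{DFFlatsOuter} into the first stated identity.
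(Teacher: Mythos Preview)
Your treatment of the second (M\"obius-weighted) formula is exactly what the paper does: apply $\omega^\circ$ to \Cref{DFFlatsInner}, invoke \Cref{OmegaValCoval} on each Schubert summand, and read off the path count from \Cref{FerroniClosed}.

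For the first formula, however, you have picked the wrong input identity, and the approach does not land where you expect. The paper obtains the first line by applying $\omega^\circ$ to \eqref{BoundedBrianchonInward}, which is already a sum of \emph{closed} Schubert matroid polytopes $\Delta(\Omega_{S_\bullet,M})$; then \Cref{OmegaValCoval} and \Cref{FerroniClosed} give the ``strictly below'' count immediately, with no further combinatorics. You instead start from \Cref{DFFlatsOuter}, whose summands are the half-open polytopes $\halfopenpolytope^{F_\bullet,M}$. Two things go wrong. First, \Cref{DFFlatsOuter} is indexed by chains of flats, not by $\Chains(\mathcal S(M))$, so the index set already does not match. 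Second, and more substantively, evaluating $\omega^\circ$ on $\bOne(\halfopenpolytope^{S_\bullet,a})$ does \emph{not} produce a ``strictly below'' count: it produces the count of Ferroni paths lying \emph{weakly above} $\verts(S_\bullet,a)$. (This is exactly the content of \Cref{FerroniOpen}, and it is what feeds into the companion result \Cref{FerroniOutward}.) So the combinatorial telescoping identity you anticipate---that the signed inclusion-exclusion over boundary faces collapses to the ``strictly below'' count---is false as stated; carried out correctly it collapses to the ``weakly above'' count instead. Intuitively, the strict inequalities $\sum_{j\in S_i} z_j > a_i$ defining $\halfopenpolytope$ push the relevant lattice points, and hence the admissible Ferroni paths, to the \emph{above} side of each vertex, not the below side.

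In short: for the first formula, use \eqref{BoundedBrianchonInward} directly and skip the half-open detour entirely.
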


\begin{proof}
Apply the covaluation $\omega^{\circ}$ to both sides of equations~\eqref{BoundedBrianchonInward} and~\eqref{eqn:DFFlatsInner} respectively. By Corollary~\ref{OmegaValCoval}, we don't have to distinguish between $\omega$ and $\omega^{\circ}$ on the right hand side, since all matroids on that side are Schubert matroids.
\end{proof}

To give the corresponding formulas when we apply $\omega^{\circ}$ to equations~\eqref{BoundedBrianchonOutward} and~\eqref{eqn:DFFlatsOuter}, 
we need to work with the half open polytopes 
$\halfopenpolytope(\Omega^{F_\bullet,M})$.
Since $\omega^{\circ}$ is valuative, it extends to a linear functional on the subspace of $\IndicatorGroup(V^\vee)$ spanned by indicator functions of matroid polytopes; we denote this extension by $\omega^{\circ}$ as well.
\begin{theorem} \label{FerroniOpen} 
Let $S_{\bullet}$ and $a_{\bullet}$ be as above and let  $\halfopenpolytope^{S_\bullet,a}$ be the half open polytope introduced in Section~\ref{DFRelations}. Then $\omega^{\circ}(\bOne(\halfopenpolytope^{S_\bullet,a}))$ is the number of Ferroni paths lying weakly above $\verts(S_{\bullet}, a_{\bullet})$. 
\end{theorem}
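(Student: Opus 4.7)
The plan is to expand $\bOne(\halfopenpolytope^{S_\bullet,a})$ by inclusion--exclusion on its missing facet hyperplanes, apply $\omega^\circ$ using multiplicativity, and then recognise the result combinatorially via \Cref{FerroniClosed} through a path-concatenation bijection. Since $\halfopenpolytope^{S_\bullet,a}=\Delta(\Omega^{S_\bullet,a})\setminus\bigcup_{i=1}^{k-1}H_i$ with $H_i=\Delta(\Omega^{S_\bullet,a})\cap\{\sum_{j\in S_i}z_j=a_i\}$, inclusion--exclusion yields
\[\bOne(\halfopenpolytope^{S_\bullet,a}) \;=\; \sum_{U\subseteq\{1,\ldots,k-1\}}(-1)^{|U|}\,\bOne\!\Big(\bigcap_{i\in U}H_i\Big).\]
Fix $U$ and list $U\cup\{0,k\}=\{u_0<u_1<\cdots<u_{\ell+1}\}$. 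The equalities $\sum_{j\in S_{u_s}}z_j=a_{u_s}$ force a direct-sum decomposition, and the intersection is the matroid polytope of $N^U=\bigoplus_{s=0}^{\ell}N_s$, where each $N_s=\Omega^{S_\bullet^{(s)},a_\bullet^{(s)}}$ is the Schubert matroid on $E_s=S_{u_{s+1}}\setminus S_{u_s}$ obtained by restricting the chain and its ranks to this block.

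Applying $\omega^\circ$ and using \Cref{multiplicative} gives $\omega^\circ(\Delta(N^U))=(-1)^{c(N^U)-1}\prod_s\omega(N_s)$. The crucial observation is that, by \Cref{SchubertLoopColoopConnected}, every Schubert matroid is either connected or has a loop or a coloop, and $\omega$ vanishes in the latter two cases. So whenever $\prod_s\omega(N_s)\ne 0$, every $N_s$ is connected, $c(N^U)=|U|+1$, and the two signs cancel: $(-1)^{|U|}(-1)^{c(N^U)-1}=1$. Therefore
\[\omega^\circ(\halfopenpolytope^{S_\bullet,a}) \;=\; \sum_{U\subseteq\{1,\ldots,k-1\}}\prod_{s=0}^{\ell}\omega(N_s).\]

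To interpret the right-hand side, apply \Cref{FerroniClosed} to each $N_s$ after the central reflection $(x,y)\mapsto(|E_s|-r_s-x,\,r_s-y)$ of its subrectangle (which converts $\Omega^{\cdot,\cdot}$ into $\Omega_{\cdot,\cdot}$ and exchanges ``strictly below'' with ``strictly above''). This presents $\omega(N_s)$ as the number of Ferroni paths in the subrectangle with corners $(|S_{u_s}|-a_{u_s},a_{u_s})$ and $(|S_{u_{s+1}}|-a_{u_{s+1}},a_{u_{s+1}})$ that are strictly above the vertices of $\verts(S_\bullet,a_\bullet)$ indexed by $\{u_s+1,\ldots,u_{s+1}-1\}$. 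An $(\ell+1)$-tuple of such subpaths concatenates uniquely, by inserting a diagonal step through each integer lattice point $(|S_{u_s}|-a_{u_s},a_{u_s})$ for $1\le s\le\ell$, into a Ferroni path in the full rectangle that passes through every vertex indexed by $U$ and is strictly above every vertex indexed by $\{1,\ldots,k-1\}\setminus U$. Conversely, any Ferroni path weakly above all of $\verts(S_\bullet,a_\bullet)$ decomposes uniquely in this fashion, with $U$ the set of vertices it actually touches: the decomposition ``weakly above $=$ strictly above $\sqcup$ passes through'' is disjoint because a Ferroni path can meet an integer lattice point only at the midpoint of a diagonal step. Summing over $U$ gives the claimed count.

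The main obstacle is the sign bookkeeping: the identity $(-1)^{|U|}(-1)^{c(N^U)-1}=+1$ must hold on every nonzero summand, and this hinges on the trichotomy of \Cref{SchubertLoopColoopConnected} ensuring that failure of $N_s$ to be connected already forces $\omega(N_s)=0$. The subsequent path-concatenation bijection is then straightforward.
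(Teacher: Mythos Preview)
Your proof is correct, but it takes a genuinely different route from the paper's. The paper expands $\bOne(\halfopenpolytope^{S_\bullet,a})$ via a different inclusion--exclusion: writing $\halfopenpolytope^{S_\bullet,a}=\Delta(r,E)\cap\bigcap_i\{z_{S_i}>a_i\}$ and expanding each open halfspace as ``all minus closed'', one obtains
\[
\bOne(\halfopenpolytope^{S_\bullet,a})=\sum_{J\subseteq[k-1]}(-1)^{|J|}\,\bOne\bigl(\Delta(\Omega_{S|_J,\,a|_J})\bigr),
\]
a sum over \emph{lower} Schubert polytopes indexed by subchains. One then applies $\omega^\circ$ (equal to $\omega$ on Schubert matroids by \Cref{OmegaValCoval}), invokes \Cref{FerroniClosed}, and reverses the order of summation: each Ferroni path $F$ is counted with weight $\sum_{J\cap K=\emptyset}(-1)^{|J|}$ where $K$ is the set of vertices weakly below~$F$, and this collapses to $1$ exactly when $K=[k-1]$.

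Your approach instead decomposes along the \emph{faces} of $\Delta(\Omega^{S_\bullet,a})$, giving direct sums of smaller Schubert matroids. This is more geometric but costs more: you need the trichotomy of \Cref{SchubertLoopColoopConnected} to control the sign $(-1)^{c(N^U)-1}$, the central reflection to convert $\Omega^{\cdot}$ to $\Omega_{\cdot}$, and a path-concatenation bijection at the end. The paper's route sidesteps all three: signs are handled by \Cref{OmegaValCoval} in one stroke, no reflection is needed, and the combinatorics reduces to a single binomial-style collapse rather than a bijection. On the other hand, your argument makes the multiplicative structure visible and yields a pleasant interpretation of the individual summands as counting paths that touch a prescribed subset of vertices.
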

By lying weakly above, we mean that the Ferroni path may pass through the points in $\verts(S_{\bullet}, a_{\bullet})$ but may not pass strictly below these points.  

Comparing the last two results explains our use of just vertices instead of a Young path to bound the Ferroni paths:
both invoke $\verts(S_{\bullet}, a_{\bullet})$, but they cannot be stated in terms of the same bounding Young path.
To state Corollary~\ref{FerroniInward} using a path we would have needed its lower right corners to occur in the positions  $\verts(S_{\bullet}, a_{\bullet})$, 
whereas a path that suits Theorem~\ref{FerroniOpen} would instead need
to have upper left corners at $\verts(S_{\bullet}, a_{\bullet})$.

\begin{proof}
Let $S_{\bullet}$ have length $k$, so $S_0 = \emptyset$ and $S_k = [n]$.
Recall that 
\[ \halfopenpolytope^{S_{\bullet}, a_{\bullet}} = \Delta(r, E) \cap \bigcap_{i=1}^{k-1} \left\{ z : \sum_{e \in S_i} z_e > a_e  \right\} . \] 
For each subset $J = \{ j_1 < j_2 < \cdots < j_c \}$ of $[k-1]$, let $S|_J$ be the flag of sets $S_0 \subset S_{j_1} \subset S_{j_2} \subset \cdots \subset S_{j_c} \subset S_k$ and let $a|_J$ be the sequence $(a_0, a_{j_1}, a_{j_2}, \ldots, a_{j_c}, a_k)$. Then
\[ \bOne( \halfopenpolytope^{S_{\bullet}, a_{\bullet}} ) = \sum_{J \subseteq [k-1]} (-1)^{|J|} \bOne(\Omega_{S|_J, a|_J}) .\]
Applying $\omega^{\circ}$ to both sides of this equation, we need to count Ferroni paths lying strictly below $\verts(S|_J, a|_J)$, weighted by $(-1)^{|J|}$. 

Given a Ferroni path $F$, let $K$ be the set of indices $i$ for which $(|S|_i - a_i, a_i)$ lies weakly below $F$. So $F$ contributes to the count for those $J$ where $J \cap K = \emptyset$, and the total contribution of $F$ is $\sum_{J \subseteq [k-1] \setminus K} (-1)^{|J|}$. This is $1$ if $K = [k-1]$ and $0$ otherwise. So we only obtain a contribution if all the points of $\verts(S_{\bullet}, a_{\bullet})$ are weakly below $F$, or in other words $F$ is weakly above $\verts(S_{\bullet}, a_{\bullet})$, and this contribution is $1$.
\end{proof}

We now can apply  $\omega^{\circ}$ to equations~\eqref{BoundedBrianchonOutward} and~\eqref{eqn:DFFlatsOuter} to obtain:
\begin{cor} \label{FerroniOutward}
Let $M$ be any rank $r$ matroid on $E$. Then
\begin{align*}
 \omega^{\circ}(M) =& \sum_{S_{\bullet} \in \Chains(\mathcal{S}(M))} \!\!\! \!\!\! \!\!\! (-1)^{|S_{\bullet}|-1}\, {}^\#\{ \textup{Ferroni paths weakly above $\verts(S_{\bullet}, a_M(S_{\bullet}))$} \} \\
 = &\sum_{F_{\bullet} \in \Chains(\mathcal{F}(M))} \!\!\! \!\!\! \!\!\! (-1)^{|F_{\bullet}|-1}\, {}^\#\{ \textup{Ferroni paths weakly above $\verts(F_{\bullet}, a_M(F_{\bullet}))$} \} .
 \end{align*}
\end{cor}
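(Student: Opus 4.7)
The plan is to apply the valuation $\omega^{\circ}$ directly to both the outward Brianchon--Gram formula \eqref{BoundedBrianchonOutward} and its refinement over chains of flats \eqref{eqn:DFFlatsOuter}. As noted just before \Cref{FerroniOpen}, $\omega^{\circ}$ extends linearly to the subspace of $\IndicatorGroup((\RR^E)^{\vee})$ spanned by indicator functions of matroid polytopes; and each half-open polytope $\halfopenpolytope^{S_\bullet,M}$ lives in that subspace, since it is expressible as a signed sum of closed Schubert matroid polytopes (precisely the inclusion-exclusion used in the proof of \Cref{FerroniOpen}). So the evaluation is legitimate.

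Carrying out the evaluation, the left-hand side $\omega^{\circ}(\bOne(\Delta(M)))$ becomes $\omega^{\circ}(M)$ by definition of the extension. For each term on the right-hand side, \Cref{FerroniOpen} identifies $\omega^{\circ}(\bOne(\halfopenpolytope^{S_\bullet,M}))$ with the number of Ferroni paths lying weakly above $\verts(S_\bullet, a_M(S_\bullet))$, and similarly $\omega^{\circ}(\bOne(\halfopenpolytope^{F_\bullet,M}))$ with the number of Ferroni paths lying weakly above $\verts(F_\bullet, a_M(F_\bullet))$ when $F_\bullet$ is a chain of flats. Assembling these with the signs $(-1)^{|S_{\bullet}|-1}$ and $(-1)^{|F_{\bullet}|-1}$ coming from the two Brianchon--Gram formulas yields exactly the two formulas claimed. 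This mirrors, step for step, the proof of \Cref{FerroniInward}, with \eqref{BoundedBrianchonOutward} and \eqref{eqn:DFFlatsOuter} replacing \eqref{BoundedBrianchonInward} and \eqref{eqn:DFFlatsInner}, and \Cref{FerroniOpen} replacing \Cref{FerroniClosed}; in particular there is no need to involve M\"obius functions, since no collapsing of the outer formula analogous to the proof of \Cref{DFFlatsInner} is required beyond what is already packaged in \Cref{DFFlatsOuter}.

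Because every ingredient has already been established in the preceding sections, I do not anticipate any serious obstacle. The only point at which some care is warranted is the bookkeeping of signs: the factor $(-1)^{|S_{\bullet}|-1}$ in the stated formulas must exactly match the signs emerging from combining \eqref{BoundedBrianchonOutward} and \eqref{eqn:DFFlatsOuter} with \Cref{FerroniOpen} (which produces a positive count, not an alternating one). Provided these are consistent with the conventions fixed earlier, the proof reduces to a one-line invocation of these earlier results, and can reasonably be written in two sentences in the spirit of the proof of \Cref{FerroniInward}.
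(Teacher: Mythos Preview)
Your proposal is correct and matches the paper's approach exactly: the paper's proof is the single sentence ``We now can apply $\omega^{\circ}$ to equations~\eqref{BoundedBrianchonOutward} and~\eqref{eqn:DFFlatsOuter} to obtain [the corollary],'' which is precisely the argument you describe, with \Cref{FerroniOpen} supplying the value of $\omega^{\circ}$ on each half-open summand. Your additional remarks about why $\omega^{\circ}$ legitimately extends to the half-open polytopes and about the sign bookkeeping are sound elaborations of points the paper leaves implicit.
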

 
\section{Crowded sets and cancellation of summands}
Define the \newword{crowding} of a set $S$ in a matroid $M$ as
\[\stress_M(S) = |S|-2\rank_M(S).\]
We will omit the subscript $M$ when it is clear from context.

Writing $r=\rank M$, our standing assumption $n\ge 2r$ implies that $\stress_M(E)=n-2r\ge0$.
We say that $S$ is \newword{crowded} (in~$M$) if $\stress_M(S) \geq 0$ and \newword{uncrowded} otherwise.
Reader beware: in our terminology, a set of crowding zero \textbf{is} crowded.
Write $\StressedSets(M)$ for the poset of crowded sets in~$M$ and $\StressedFlats(M)$ for the poset of crowded flats.

\begin{lemma} \label{StressSupermodular}
The function $\stress$ is supermodular, meaning that, for any subsets $I$ and $J$ of $E$, we have
\[ \stress(I \cup J) + \stress(I \cap J) \geq \stress(I) + \stress(J) .\]
\end{lemma}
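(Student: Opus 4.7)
The plan is to expand both sides in terms of the definitions and observe that the desired inequality reduces to a well-known property of matroid rank. Specifically, I would write
\[
\stress(I \cup J) + \stress(I \cap J) - \stress(I) - \stress(J) = \bigl(|I\cup J| + |I\cap J| - |I| - |J|\bigr) - 2\bigl(\rank(I\cup J) + \rank(I\cap J) - \rank(I) - \rank(J)\bigr).
\]
The first parenthesized quantity vanishes because cardinality is modular on subsets of~$E$. The second parenthesized quantity is $\le 0$ by submodularity of the matroid rank function. Hence the right hand side is $\ge 0$, which is exactly the claim.

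The only subtle point to highlight is that this works because the coefficient $-2$ in front of $\rank$ flips the sign of the submodular inequality into the desired supermodular one. I would emphasize that no hypothesis on $I$ or~$J$ is needed, and that the proof does not depend on the standing assumption $n \ge 2r$; supermodularity of $\stress$ is a purely formal consequence of modularity of $|\cdot|$ plus submodularity of $\rank$. There is no substantive obstacle here; the proof is a two-line computation.
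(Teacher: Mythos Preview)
Your proof is correct and essentially identical to the paper's: both expand the definition, cancel the cardinality terms via inclusion--exclusion (modularity of $|\cdot|$), and reduce to submodularity of the rank function.
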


\begin{proof}
We want to show that
\[ |I \cup J| - 2 \rank(I \cup J) + |I \cap J| - 2 \rank(I \cap J) \geq |I| - 2 \rank(I) + |J| - 2 \rank(J) . \]
By inclusion-exclusion, the cardinality terms cancel, so the inequality simplifies to
\[ \rank(I) + \rank(J) \geq \rank(I \cup J) + \rank(I \cap J) , \]
the standard fact that $\rank$ is submodular \cite[Lemma 1.3.1]{Oxley}.
\end{proof}

We will be examining sets of high crowding in~$M$. 
Often we can restrict our attention to flats rather than to all sets:
\begin{lemma} \label{StressClosure}
Let $S$ be a subset of $E$ and let $F$ be the closure of $S$ in the matroid $M$. Then $\stress(F) \geq \stress(S)$, with strict inequality if $F \neq S$.
\end{lemma}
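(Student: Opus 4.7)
The plan is essentially a one-line unwinding of the definition of crowding together with the defining property of matroid closure. Recall that $\cl(S) \supseteq S$ and $\rank(\cl(S)) = \rank(S)$, since adding closure elements (which are by definition rank-preserving) cannot change the rank. Therefore
\[
\stress(F) - \stress(S) = \bigl(|F| - |S|\bigr) - 2\bigl(\rank(F) - \rank(S)\bigr) = |F| - |S|.
\]
Since $F \supseteq S$, this difference is nonnegative, giving $\stress(F) \geq \stress(S)$. Moreover, the difference equals $|F|-|S|$, which is strictly positive exactly when $F \neq S$, yielding the strict inequality in the second assertion.

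There is no real obstacle here; the proof is two lines once one observes that the rank contribution drops out. If anything, the only thing worth emphasizing is that we are using the property of closure, namely that $\rank(\cl(S))=\rank(S)$, which is the defining feature of the closure operator of a matroid (see, e.g., \cite[Lemma 1.4.2]{Oxley}).
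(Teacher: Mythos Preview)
Your proof is correct and follows exactly the same approach as the paper's: both use that $\rank(F)=\rank(S)$ and $|F|\geq |S|$, with strict inequality in the latter when $F\neq S$.
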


\begin{proof}
We have $\rank(F) = \rank(S)$ and $|F| \geq |S|$.
\end{proof}

The importance of crowding is that uncrowded sets and sets of high crowding compared to~$E$ impose very strong conditions on Ferroni paths,
depicted in Figure~\ref{StressAndFerroniFig} and stated in the next proposition.
\begin{prop} \label{StressAndFerroni}
Let $M$ be a rank $r$ matroid on $E$, with $|E|=n$, and let $S \subseteq E$. 
If $S$ is uncrowded, then all Ferroni paths from $(\frac12, \frac12)$ to $(n-r-\frac12, r-\frac12)$ pass strictly below $p_M(S)$. 
If $\stress(S)=n-2r$, then all Ferroni paths pass weakly above $p_M(S)$, and if $\stress(S)>n-2r$, then all such Ferroni paths pass strictly above $p_M(S)$.
\end{prop}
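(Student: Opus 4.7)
The plan is to sandwich every Ferroni path between two linear functions of~$x$ coming from the slope constraints, and then read off each of the three assertions as a one-line comparison. Writing $a = |S|-\rank_M(S)$ and $b = \rank_M(S)$, we have $p_M(S) = (a,b)$ and $\stress_M(S) = a - b$. I would regard a Ferroni path as the continuous piecewise linear function $y:[\tfrac12, n-r-\tfrac12]\to\mathbb R$ obtained by linearly interpolating its vertices, so that a diagonal step passes through the integer midpoint of its bounding lattice vertices. The slope of $y$ is everywhere $0$ or~$1$, with $y(\tfrac12) = \tfrac12$ and $y(n-r-\tfrac12) = r-\tfrac12$.

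The two slope-based bounds do all the work. Because the slope is at most~$1$, tracing forward from the left endpoint yields
\[ y(x)\le x. \]
Because the slope is at least~$0$, and from $(x,y(x))$ the path must still climb to $r-\tfrac12$ over a width $n-r-\tfrac12-x$, tracing backward from the right endpoint yields
\[ y(x)\ge x + 2r - n. \]
These are the only facts required.

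To conclude, I would substitute $x = a$ into each bound. If $S$ is uncrowded then $a - b = \stress_M(S)<0$, hence $y(a)\le a< b$, so the path lies strictly below $p_M(S)$. If $\stress_M(S) = n-2r$ then $a + 2r - n = b$, so $y(a)\ge b$: weakly above. If $\stress_M(S) > n-2r$ then, because both sides are integers, $a - b\ge n-2r+1$, so $y(a)\ge b+1$: strictly above.

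The only (minor) obstacle is the boundary bookkeeping when $a\notin\{1,\ldots,n-r-1\}$, so that $p_M(S)$ lies at or past an endpoint of the path. I would handle this either by observing that the relevant case of the proposition becomes vacuous (e.g.\ for $S=\emptyset$ with $n>2r$ no case applies), or by extending $y$ through one implicit diagonal step at each end, setting $y(0)=0$ and $y(n-r)=r$; under this extension the bounds $y(x)\le x$ and $y(x)\ge x+2r-n$ still hold, and the three comparisons above go through verbatim.
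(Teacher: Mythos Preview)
Your proof is correct and takes essentially the same approach as the paper: both arguments reduce to the observation that every Ferroni path lies between the lines $y=x$ and $y=x-(n-2r)$, and then compare $p_M(S)$ to these lines. The paper phrases this by explicitly naming the highest and lowest Ferroni paths, whereas you extract the same two inequalities directly from the slope constraints; aside from that cosmetic difference (and the harmless slip that the lower bound really comes from slope $\le 1$, not slope $\ge 0$), the proofs are the same.
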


\begin{proof}
We use $(x,y)$ to denote coordinates in the plane where we draw Ferroni paths.

The highest possible Ferroni path is the one which consists of $r-1$ segments in direction $(1,1)$, followed by $n-2r$ segments in direction $(1,0)$. The first part of this path travels along the line $x=y$. If $S$ is uncrowded, then $p_M(S)$ lies strictly above this line, and hence strictly above any Ferroni path.

The lowest possible Ferroni path is the one which consists of  $n-2r$ segments in direction $(1,0)$ followed by $r-1$ segments in direction $(1,1)$. The second part of this path travels along the line $x = y+(n-2r)$. If $\stress(S)=n-2r$, then $|S|-\rank_M(S) = \rank_M(S) + (n-2r)$, so $p_M(S)$ lies on this line and hence weakly below any Ferroni path; if $\stress(S)>n-2r$ then, similarly, $p_M(S)$ is strictly below any Ferroni path.
\end{proof}

\begin{figure}[htb]
\includegraphics[scale=0.75]{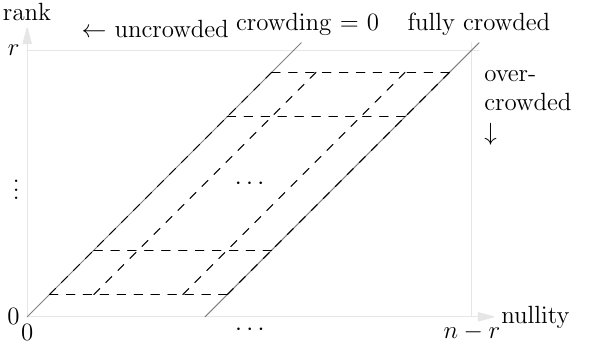}
\caption{Illustration of Proposition~\ref{StressAndFerroni}.
The Ferroni paths for a rank~$r$ matroid on $n$ elements make up the grid of dashed lines.
The crowding coordinate in the picture is $x-y$: 
that is, sets $S$ with $p_M(S)=(x,y)$ have $\stress(S)=x-y$.
The lines on which sets of crowding $0$ and $n-2r$ lie have been drawn in.
}\label{StressAndFerroniFig}
\end{figure}

We now extract consequences of Proposition~\ref{StressAndFerroni}.

\begin{prop} \label{stressedChains}
Let $M$ be a rank $r$ matroid on $E$, with $|E|=n$. 
In the sums in Corollary~\ref{FerroniOutward}, 
if we sum only over chains of crowded sets, respectively crowded flats,
we still recover $\omega^{\circ}(M)$.
That is to say,
\begin{align*}
 \omega^{\circ}(M) =& \sum_{S_{\bullet} \in \Chains(\StressedSets(M))} \!\!\! \!\!\! \!\!\! (-1)^{|S_{\bullet}|-1}\, {}^\#\{ \textup{Ferroni paths weakly above $\verts(S_{\bullet}, a_M(S_{\bullet}))$} \} \\
 = &\sum_{F_{\bullet} \in \Chains(\StressedFlats(M))} \!\!\! \!\!\! \!\!\! (-1)^{|F_{\bullet}|-1}\, {}^\#\{ \textup{Ferroni paths weakly above $\verts(F_{\bullet}, a_M(F_{\bullet}))$} \} 
 \end{align*}
\end{prop}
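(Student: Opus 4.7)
The plan is to prove \Cref{stressedChains} by showing that any chain in $\Chains(\mathcal S(M))$ or $\Chains(\mathcal F(M))$ that is not entirely crowded contributes $0$ to its respective sum in \Cref{FerroniOutward}; then restricting the indexing sets to chains of crowded sets (resp.\ flats) omits only zero terms.

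First I would confirm that the required endpoints $\hat 0 = \emptyset$ and $\hat 1 = E$ of a chain are themselves crowded, so that the index sets $\Chains(\StressedSets(M))$ and $\Chains(\StressedFlats(M))$ are non-trivial and contain exactly those chains of~$\Chains(\mathcal S(M))$ or $\Chains(\mathcal F(M))$ whose intermediate sets are crowded. Indeed, $\stress(\emptyset) = 0 \geq 0$ and $\stress(E) = n-2r \geq 0$ by the standing assumption $n \geq 2r$. (For the flats version, $E$ is always a flat and, when $M$ is loop-free as in the hypothesis of \Cref{DFFlatsOuter}, $\emptyset$ is a flat as well.)

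Next I would handle the main step: suppose $S_\bullet = (S_0, S_1, \ldots, S_k)$ is a chain with some intermediate $S_j$ (where $1 \leq j \leq k-1$) that is uncrowded in~$M$, i.e.\ $\stress_M(S_j) < 0$. By \Cref{StressAndFerroni}, every Ferroni path from $(\tfrac12,\tfrac12)$ to $(n-r-\tfrac12,r-\tfrac12)$ passes strictly below the lattice point $p_M(S_j)$. Since by definition $p_M(S_j) \in \verts(S_\bullet, a_M(S_\bullet))$, no Ferroni path can lie weakly above the entire set $\verts(S_\bullet, a_M(S_\bullet))$. Hence the count attached to $S_\bullet$ is zero, and this summand drops out of both formulas in \Cref{FerroniOutward} regardless of its sign.

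Combining these observations, every summand in \Cref{FerroniOutward} indexed by a chain containing any uncrowded set is zero. Restricting to chains whose intermediate members are crowded--equivalently, to $\Chains(\StressedSets(M))$ or $\Chains(\StressedFlats(M))$--therefore preserves the value of the sum, giving the two displayed identities. There is no serious obstacle in this argument; essentially all the geometric content is already packaged in \Cref{StressAndFerroni}, and the work here is purely a cancellation/restriction of indexing.
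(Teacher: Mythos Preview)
Your proof is correct and follows essentially the same approach as the paper: both arguments observe that any chain containing a proper nonempty uncrowded set contributes zero because, by \Cref{StressAndFerroni}, no Ferroni path can lie weakly above the corresponding point $p_M(S)$. Your version adds the helpful sanity check that $\emptyset$ and $E$ are themselves crowded, which the paper leaves implicit.
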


\begin{proof}
We claim that any chain which contains a  proper, nonempty uncrowded set contributes $0$ to the sum. Indeed, suppose that $S$ is a proper, nonempty uncrowded set. Then, by Proposition~\ref{StressAndFerroni}, there are no Ferroni paths passing weakly above $p_M(S)$, so $S_{\bullet}$ contributes $0$ to the sum in  Corollary~\ref{FerroniOutward}. 
\end{proof}

\begin{prop} \label{NoStress}
Let $M$ be a rank $r$ matroid on $E$, with $|E|=n$, and suppose that $M$ has no non-empty proper crowded flats. Then $\omega(M) = \binom{n-r-1}{r-1}$.
\end{prop}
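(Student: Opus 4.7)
The plan is to apply the flat-version of the Ferroni-path formula from \Cref{stressedChains} and show that the index set collapses to a single chain, leaving us to count all Ferroni paths.

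First I would observe that under the hypothesis, the only crowded flats of $M$ are $\emptyset$ and $E$ itself (both of which are crowded: $\emptyset$ trivially, and $E$ because $\stress(E)=n-2r\ge 0$ by our standing assumption). Hence $\Chains(\StressedFlats(M))$ contains the single chain $F_{\bullet}=(\emptyset,E)$ of length $1$. The corresponding vertex set $\verts(F_{\bullet},a_M(F_{\bullet}))$ is empty, so every Ferroni path counts. Therefore the second formula in \Cref{stressedChains} reduces to
\[\omega^{\circ}(M)= \#\{\text{Ferroni paths from $(\tfrac12,\tfrac12)$ to $(n-r-\tfrac12,r-\tfrac12)$}\}.\]
A Ferroni path of this kind takes $n-r-1$ steps, of which $r-1$ are $(1,1)$-steps and $n-2r$ are $(1,0)$-steps, so the count is $\binom{n-r-1}{r-1}$.

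Next I would upgrade this equality for $\omega^\circ$ to one for $\omega$ by showing that, apart from degenerate cases, $M$ is connected. The key step is ruling out loops, coloops, and disconnection using the no-crowded-flats hypothesis. If $e$ were a loop, then the set of all loops of $M$ is a flat with strictly positive crowding; provided $r\ge 1$ this is a proper non-empty crowded flat, a contradiction. If $e$ were a coloop, then $E\setminus\{e\}$ is a flat of crowding $n-2r+1\ge 1$, again a contradiction. Finally, if $M=M_1\oplus\cdots\oplus M_c$ with $c\ge 2$, then each ground set $E_i$ is a proper non-empty flat, so each $\stress(E_i)=n_i-2r_i<0$; but $\sum_i(n_i-2r_i)=n-2r\ge 0$, a contradiction. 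Thus $c(M)=1$ and $\omega(M)=\omega^{\circ}(M)$.

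I would separately note the trivial case $r=0$: then any element of $E$ is a loop, and $\omega(M)=0=\binom{n-1}{-1}$, so the formula holds on this side. The main content of the proof is the bookkeeping about which flats are crowded; the Ferroni path count itself is a one-line binomial computation, and there is no real obstacle apart from carefully justifying the connectivity reduction from $\omega^\circ$ to $\omega$.
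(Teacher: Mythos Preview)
Your proof is correct and follows essentially the same approach as the paper: apply the flat version of \Cref{stressedChains}, observe that only the trivial chain $(\emptyset,E)$ survives, and count all Ferroni paths to get $\binom{n-r-1}{r-1}$. Your argument is in fact more careful than the paper's, which silently identifies $\omega^\circ(M)$ with $\omega(M)$; you explicitly justify this by showing that the no-crowded-flats hypothesis forces $M$ to be loop-free, coloop-free, and connected (so $c(M)=1$). One minor reordering would make the exposition cleaner: establish loop-freeness \emph{before} invoking the flat version of \Cref{stressedChains}, since that formula (via \Cref{DFFlatsOuter}) is only derived for loop-free matroids, and since loop-freeness is what guarantees $\emptyset\in\StressedFlats(M)$ in your first paragraph.
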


\begin{proof}
By Proposition~\ref{stressedChains}, we only need to compute the contribution of the trivial chain $(\emptyset, E)$. 
The contribution of the trivial chain is the number of Ferroni paths from $(\frac12, \frac12)$ to $(n-r-\frac12, r-\frac12)$. Each such path has $(n-2r)$ horizontal steps and $(r-1)$ diagonal steps, and those steps may be ordered in any way, so there are $\binom{n-r-1}{r-1}$ such paths.
\end{proof}

\begin{remark}
We could replace ``flats" with ``sets" in Proposition~\ref{NoStress}, but then our result would only be useful in the case $n=2r$, 
since $E \setminus \{ i \}$ is crowded for all $i \in E$ when $n>2r$.
\end{remark}

We will now prove a sequence of results giving conditions under which we can combine different terms in~Proposition~\ref{stressedChains}. For this purpose, we need to know when two chains of sets have the same Ferroni paths lying above them.
Let $S_{\bullet}$ be a chain of sets including $\emptyset$ and $E$. We define the \newword{crowd hull} of $S_{\bullet}$ to be
\[ \StressHull(S_{\bullet}) = \{ S_i \in S_{\bullet} : \Stress(S_j) > \Stress(S_i)  \ \textup{for all}\  j > i \} . \]

\begin{prop} \label{StressHull}
Let $S_{\bullet}$ be a chain of sets including $\emptyset$  and $E$. The set of Ferroni paths lying weakly above $\verts(S_{\bullet})$ is the same as the set of Ferroni paths lying weakly above $\verts(\StressHull(S_{\bullet}))$. 
\end{prop}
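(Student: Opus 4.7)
The plan is to prove set equality by two containments. The forward direction is immediate: $\StressHull(S_\bullet)\subseteq S_\bullet$ gives $\verts(\StressHull(S_\bullet))\subseteq\verts(S_\bullet)$, so any Ferroni path weakly above the larger set is automatically weakly above the subset. The real content is the converse: a Ferroni path $P$ that is weakly above $\verts(\StressHull(S_\bullet))$ must also lie weakly above every $p_M(S_i)$ for $S_i\in S_\bullet\setminus\StressHull(S_\bullet)$ with $1\le i\le k-1$.

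I would argue this by contradiction. Suppose $P$ lies strictly below some such $p_M(S_i)$. By \Cref{StressAndFerroni}, no Ferroni path is strictly below a point of crowding $\ge n-2r$, so $\Stress(S_i)<n-2r$. Since $S_i\notin\StressHull(S_\bullet)$, some $j>i$ satisfies $\Stress(S_j)\le\Stress(S_i)$, and such a $j$ must lie in $\{i+1,\dots,k-1\}$ because $\Stress(S_k)=n-2r>\Stress(S_i)$. Let $j^\ast$ be the \emph{maximal} such index. I would then verify $S_{j^\ast}\in\StressHull(S_\bullet)$: for $j^\ast<l\le k-1$ maximality gives $\Stress(S_l)>\Stress(S_i)\ge\Stress(S_{j^\ast})$, and for $l=k$ we have $\Stress(E)=n-2r>\Stress(S_{j^\ast})$.

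The remaining geometric step, which I expect to be the main obstacle, is to show that $P$ is also strictly below $p_M(S_{j^\ast})$, contradicting the weak-aboveness hypothesis. Writing $(a,b)=p_M(S_i)$ and $(a',b')=p_M(S_{j^\ast})$, the chain inclusion $S_i\subsetneq S_{j^\ast}$ gives $a\le a'$ and $b\le b'$, while $\Stress(S_{j^\ast})\le\Stress(S_i)$ rearranges to $a'-a\le b'-b$. Since each step of a Ferroni path changes $x$ by $1$ and $y$ by $0$ or $1$, the $y$-coordinate of $P$ at $x=a'+\tfrac12$ is at most its $y$-coordinate at $x=a+\tfrac12$ plus $a'-a$; the strict-belowness hypothesis at $p_M(S_i)$ bounds the latter by $b-\tfrac12$, so the former is at most $b-\tfrac12+(a'-a)\le b'-\tfrac12<b'$. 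Thus $P$ is strictly below $p_M(S_{j^\ast})$, the desired contradiction. All the real work is in recognizing that the crowding inequality is exactly the slope bound allowing strict-belowness to propagate forward along $P$; the only place for slips is the half-integer bookkeeping distinguishing \emph{strictly below} from \emph{weakly above}.
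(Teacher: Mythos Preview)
Your proof is correct and follows essentially the same route as the paper's: both arguments take the largest index $j>i$ with $\Stress(S_j)\le\Stress(S_i)$, verify that $S_j\in\StressHull(S_\bullet)$, and use the geometric fact that a Ferroni path weakly above $p_M(S_j)$ is automatically weakly above $p_M(S_i)$. The paper simply asserts this last fact in one line, whereas you spell out the slope argument and explicitly dispose of the possibility $j^\ast=k$ via \Cref{StressAndFerroni}; your version is the more careful of the two.
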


\begin{proof}
The points in $\verts(\StressHull(S_{\bullet}))$ are a subset of the points in  $\verts(S_{\bullet})$, so what we need to confirm is that, if $S_i$ is a set in $S_{\bullet}$ which is not in $\StressHull(S_{\bullet})$, then the condition that paths lie above $p_M(S_i)$ is redundant. Let $S_i$ be such a set. Then there is an index $j > i$ with $\Stress(S_j) \leq \Stress(S_i)$. Choose the greatest such $j$. Then $S_j \in \StressHull(S_{\bullet})$. Any path which lies weakly above $p_M(S_j)$ must also lie above $p_M(S_i)$. 
\end{proof}

\begin{remark} \label{BetterHullRemark}
$\StressHull(S_{\bullet})$ is \textbf{not} the smallest subsequence $T_{\bullet}$ of $S_{\bullet}$ such that the Ferroni paths lying above $\verts(T_{\bullet})$ are the same as those lying above $\verts(S_{\bullet})$. The smallest such subsequence is, instead, given by
\begin{equation}
 \{ S_i \in S_{\bullet} : \Stress(S_j)  > \Stress(S_i) \ \textup{for}\  j > i  \ \textup{and} \rank(S_j) <  \rank(S_i) \ \textup{for} \ j<i \}. \label{BetterHullEqn}
 \end{equation}
In other words, if we have two sets $S_i \subset S_j$ in $S_{\bullet}$ with the same rank (and hence the same closure), we should eliminate the former one. 
However, Equation~\ref{BetterHullEqn} appears harder to work with than $\StressHull(S_{\bullet})$. 
We note that, if $S_{\bullet}$ is a chain of flats, then the condition on rank is automatically true and Equation~\ref{BetterHullEqn} is the same as $\StressHull(S_{\bullet})$.
\end{remark}

So, in Proposition~\ref{stressedChains}, we can group together all chains that have the same crowd hull. 
In many cases, the grouped sum turns out to be $0$. 
We will now introduce the notation to describe when this occurs.

Given sets $T \subset S$ in $E$, we say that $T$ is a \newword{summand} of $S$ if $\rank(S) = \rank(T) + \rank(S \setminus T)$. We rephrase this in two equivalent ways: $T$ is a summand of $S$ if and only if  $M|S \cong M|T \oplus M|(S \setminus T)$, which explains the terminology. Also, $T$ is a summand of $S$ if and only if $T$ is a union of connected components of $M|S$.

Given sets $T \subset S$ in $E$, we say that $T$ is \newword{overcrowded in $S$} if either
\begin{enumerate}
\item $\stress(T) > \stress(S)$ or
\item $\stress(T) = \stress(S)$ and $T$ is \textbf{not} a summand of $S$.
\end{enumerate}
We will say that $S$ is a \newword{crowding record} if it has no subset that is overcrowded in it.
We prove some basic lemmas about this concept: 

\begin{lemma} \label{RecordStressorStressed}
If $S$ is a crowding record, then $\stress(S) \geq 0$.
\end{lemma}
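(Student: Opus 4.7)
The plan is to observe that the empty set is always available as a witness. Take $T = \emptyset \subseteq S$. Then $\stress(\emptyset) = |\emptyset| - 2\rank(\emptyset) = 0$, so if $\stress(S) < 0$ we immediately have $\stress(\emptyset) > \stress(S)$. By clause~(1) of the definition of ``overcrowded in $S$'', this makes $\emptyset$ overcrowded in $S$, contradicting the assumption that $S$ is a crowding record.

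More explicitly, I would organize the short argument as follows. First handle the trivial case $S = \emptyset$, where $\stress(S) = 0 \geq 0$ is automatic. Then for $S \neq \emptyset$, argue by contrapositive: assume $\stress(S) < 0$ and exhibit a subset of $S$ that is overcrowded in it. The empty set does the job, since $\stress(\emptyset) = 0 > \stress(S)$ triggers clause~(1) of the definition.

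I do not anticipate any obstacle — the statement follows directly from unpacking definitions, and the only point worth noting is that clause~(1) suffices, so we do not need to discuss whether $\emptyset$ is a summand of $S$ (although it is, trivially, since $\rank(\emptyset) + \rank(S \setminus \emptyset) = 0 + \rank(S) = \rank(S)$). This lemma is presumably included as a warm-up observation supporting the name ``crowded'' for sets with $\stress \geq 0$: a crowding record is in particular crowded.
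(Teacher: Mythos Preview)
Your proof is correct and takes essentially the same approach as the paper: both use $T=\emptyset$ and observe that $\stress(\emptyset)=0$, so a crowding record cannot have negative crowding lest $\emptyset$ be overcrowded in it by clause~(1). The paper's proof is simply the one-line version ``$\stress(S)\geq\stress(\emptyset)\geq 0$'' of what you have spelled out.
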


\begin{proof}
We have $\stress(S) \geq \stress(\emptyset) \geq 0$.
\end{proof}

\begin{lemma} \label{HighlyStressedComplement}
Let $T \subset S$ with $T$ overcrowded in $S$.
Then $S \setminus T$ is uncrowded, meaning that $\stress(S \setminus T) < 0$.
\end{lemma}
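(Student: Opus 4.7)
The plan is to reduce the claim to a direct computation using supermodularity of $\stress$, which was established in \Cref{StressSupermodular}. Specifically, since $T$ and $S \setminus T$ are disjoint with union $S$ and intersection $\emptyset$, applying supermodularity (or just expanding the definition $\stress(X)=|X|-2\rank(X)$ directly) yields
\[ \stress(S) - \stress(T) - \stress(S \setminus T) = 2\bigl(\rank(T) + \rank(S \setminus T) - \rank(S)\bigr). \]
By submodularity of the rank function, the right-hand side is nonnegative, and it is zero precisely when $T$ is a summand of $S$.

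With this identity in hand, I would split into the two cases in the definition of ``overcrowded''. In the first case, where $\stress(T) > \stress(S)$, the identity gives $\stress(S \setminus T) \le \stress(S) - \stress(T) < 0$. In the second case, $\stress(T) = \stress(S)$ but $T$ is not a summand of $S$, so the inequality from the identity is strict, giving $\stress(T) + \stress(S \setminus T) < \stress(S) = \stress(T)$ and hence $\stress(S \setminus T) < 0$. Either way, $S \setminus T$ is uncrowded, which is the conclusion.

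There is no real obstacle here; the only subtlety is remembering that equality in the submodular inequality $\rank(T) + \rank(S\setminus T) \ge \rank(S)$ corresponds exactly to the summand condition, which is why both cases of the definition of ``overcrowded'' are needed and why they produce the strict inequality we want.
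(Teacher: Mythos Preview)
Your proof is correct and follows essentially the same approach as the paper: both apply supermodularity of $\stress$ (equivalently, submodularity of rank) to the pair $T$, $S\setminus T$, and both use that equality in the resulting inequality forces $T$ to be a summand of~$S$, contradicting the overcrowded hypothesis. Your version is slightly more explicit in separating the two cases of the definition, but the substance is the same.
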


\begin{proof}
By the supermodularity of crowding, we have
\[ \stress(T) + \stress(S \setminus T) \leq \stress(S) + \stress(\emptyset) = \stress(S). \]
So $\stress(S \setminus T) \leq \stress(S)-\stress(T) \leq 0$. 
We need to show that we do not have equality, so suppose that we have equality. Then we must have $\stress(T) = \stress(S)$, and we must have $\rank_M(S) = \rank_M(T) + \rank_M(S \setminus T)$. 
\end{proof}

\begin{lemma} \label{RecordSummand}
Let $S$ be a crowding record and let $X$ be a summand of $S$ (meaning that $M|S \cong M|X \oplus M|(S \setminus X)$). Then $X$ is a crowding record. In particular, $\stress(X) \geq 0$.
\end{lemma}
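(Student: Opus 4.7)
My plan is to argue by contrapositive: I will assume $Y \subseteq X$ is overcrowded in $X$ and produce from it a subset $T \subseteq S$ that is overcrowded in $S$, contradicting that $S$ is a crowding record. The natural candidate is $T = Y \cup (S \setminus X)$. The whole argument will rest on the basic identity that, because $X$ is a summand of $S$, we have $M|S = M|X \oplus M|(S \setminus X)$, and hence for any $Y \subseteq X$ the restriction splits as $M|T = M|Y \oplus M|(S \setminus X)$. Taking cardinalities and ranks this gives
\[ \stress(T) - \stress(S) = \bigl(\stress(Y) + \stress(S \setminus X)\bigr) - \bigl(\stress(X) + \stress(S \setminus X)\bigr) = \stress(Y) - \stress(X). \]

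With this identity in hand, the two cases in the definition of ``overcrowded'' become transparent. If $\stress(Y) > \stress(X)$, then $\stress(T) > \stress(S)$ and so $T$ is overcrowded in $S$, immediately contradicting the hypothesis on~$S$. If instead $\stress(Y) = \stress(X)$ and $Y$ is not a summand of $X$, then $\stress(T) = \stress(S)$, and I need to show $T$ is not a summand of $S$. The main step here, which I expect to be the only place where care is needed, is the following observation: if $T$ were a summand of $S$, then its complement $S \setminus T = X \setminus Y$ would also be a summand, giving a direct sum decomposition $M|S = M|(X \setminus Y) \oplus M|T$. Restricting this decomposition to $X$ (using $T \cap X = Y$) yields $M|X = M|(X \setminus Y) \oplus M|Y$, contradicting the assumption that $Y$ is not a summand of $X$. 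This again places $T$ in the overcrowded category for $S$, finishing the contrapositive.

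The ``in particular'' statement then follows immediately from \Cref{RecordStressorStressed}. The main obstacle is really just the restriction-of-a-direct-sum argument in case~(b); once one is comfortable that summands restrict to summands along summands, the rest is a two-line comparison of crowding numbers using the additivity $\stress(A \sqcup B) = \stress(A) + \stress(B)$ whenever $M|(A \sqcup B) = M|A \oplus M|B$.
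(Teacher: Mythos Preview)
Your proof is correct and follows essentially the same approach as the paper: both argue that if some subset of $X$ is overcrowded in $X$, then its union with $S\setminus X$ is overcrowded in $S$. The paper compresses this into a single sentence without distinguishing the two cases, whereas you have spelled out the crowding identity and the summand-restriction argument for case~(2) explicitly; your added detail is accurate and the variable names differ (the paper uses $Y$ for $S\setminus X$ and $T$ for the overcrowded subset), but the underlying idea is identical.
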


\begin{proof}
Let $Y = S \setminus X$, so that $\rank(S) = \rank(X) + \rank(Y)$. If $T$ is overcrowded in $X$, then $T \sqcup Y$ is overcrowded in $S$, a contradiction, so we see that no set is overcrowded in~$X$. In particular, $\stress(X) \geq 0$.
\end{proof}

We now start to take advantage of the concept of crowding records,
to give strengthenings of Proposition~\ref{stressedChains}.

\begin{proposition}\label{RecordStressorOnly}
In the first sum of Proposition~\ref{stressedChains}, we can instead sum only over chains of sets all of whose elements are crowding records.
\end{proposition}

We emphasize that Proposition~\ref{RecordStressorOnly} includes the condition that $E$, which is the last element of every chain, must be a crowding record. In other words, if $E$ is not a crowding record, then the sum is empty. So we deduce:

\begin{cor}\label{overStressed}
If $M$ has an overcrowded set in~$E$ --- 
that is, a set of crowding exceeding $n-2r$, or a set of crowding $n-2r$ which is not a union of connected components of~$M$ --- 
then $\omega(M)=0$.
\end{cor}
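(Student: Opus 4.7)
The plan is to derive \Cref{overStressed} as an immediate consequence of \Cref{RecordStressorOnly}. By \Cref{RecordStressorOnly}, we have
\[
\omega^{\circ}(M) = \sum_{S_{\bullet}} (-1)^{|S_{\bullet}|-1}\,{}^\#\{\text{Ferroni paths weakly above } \verts(S_{\bullet}, a_M(S_{\bullet}))\},
\]
where the sum ranges over chains of crowded sets each of whose elements is a crowding record. Every chain $S_\bullet \in \Chains(\StressedSets(M))$ begins with $\emptyset$ and ends with $E$, so in particular $E$ itself must be a crowding record for the chain to contribute.

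Now suppose $M$ has an overcrowded set in $E$, that is, a subset $T \subseteq E$ with either $\stress(T) > n-2r = \stress(E)$, or $\stress(T) = n-2r$ and $T$ not a summand of $E$ (equivalently, not a union of connected components of $M$). By the very definition of crowding record, the existence of such a $T$ witnesses that $E$ is \emph{not} a crowding record. Therefore no chain satisfies the hypotheses of the sum, the sum is empty, and $\omega^{\circ}(M)=0$. Since $\omega(M) = (-1)^{c(M)-1}\omega^{\circ}(M)$, we conclude $\omega(M)=0$.

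There is essentially no obstacle here; the entire content sits in \Cref{RecordStressorOnly} (the real work being in that proposition's proof via toggling/cancellation in the Ferroni path sum). The only thing to double-check is the dictionary between the two formulations of ``overcrowded'': the definition of $T$ overcrowded in~$S$ matches the hypothesis of the corollary applied with $S=E$, using that $\stress(E) = n-2r$ and that summands of $E$ in the matroid $M|E = M$ are exactly unions of connected components of~$M$.
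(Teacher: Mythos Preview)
Your proof is correct and takes essentially the same approach as the paper: the paper derives the corollary immediately from \Cref{RecordStressorOnly} by observing that $E$ is the last element of every chain, so if $E$ is not a crowding record the sum is empty. Your additional remarks unpacking the dictionary between ``overcrowded in $E$'' and the hypothesis of the corollary are accurate and helpful.
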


\begin{proof}[Proof of Proposition~\ref{RecordStressorOnly}]
For each set $S$ which is not a crowding record, choose once and for all an overcrowded subset $T_S$ of $S$, and choose this $T_S$ to be of highest crowding among all sets which are overcrowded in $S$.
Set $U_S = S \setminus T_S$, so $U_S$ is uncrowded by Lemma~\ref{HighlyStressedComplement}.

We will now define an involution, the \newword{toggle}, on the collection of chains of crowded sets, so that every chain that contains a set which is not a crowding record toggles to another such chain, and these two will cancel in the summation.

Let $S_{\bullet}$ be a chain of crowded sets.
If all the sets of $S_{\bullet}$ are crowding records, then we define $S_{\bullet}$ to be its own toggle. 
Otherwise $S$ contains a set which is not a crowding record.
Let $m$ be the smallest index for which $S_m$ is  which is not a crowding record.
Abbreviate $T := T_{S_m}$ and $U := S_m \setminus T$.
For each $1 \leq j \leq m$, we have $S_j \setminus S_{j-1} \subseteq S_m = T \sqcup U$.
We create a string of $m$ symbols $(\alpha_1, \alpha_2, \ldots, \alpha_m)$ in the alphabet $\{ \tt, \tu, \tv \}$ where the symbol $\alpha_j$ is 
\begin{enumerate}
\item[$\tt$] if $S_j \setminus S_{j-1} \subseteq T$.
\item[$\tu$] if $S_j \setminus S_{j-1} \subseteq U$.
\item[$\tv$] if $S_j \setminus S_{j-1}$ contains elements of both $T$ and  $U$. 
\end{enumerate}
The  reader can think of $\tv$ as standing for ``variegated".

Since $U$ is uncrowded, it is not one of the sets in $S_{\bullet}$, so $(\alpha_1, \alpha_2, \ldots, \alpha_m)$  cannot be of the form $\tu^a \tt^b$. Thus, there must either be some index $i$ where $\alpha_i=\tv$, or there must be some index $i$ where $(\alpha_i, \alpha_{i+1}) = (\tt,\tu)$. Let $i$ be the first index where one of these two things happens. 
In the first case, we define the toggle of~$S_{\bullet}$ to be the chain obtained by deleting $S_i$ from the chain of sets; 
in the second case, we define the toggle of~$S_{\bullet}$ to be obtained by inserting $S_{i-1} \cup T  \cap S_i$ in between $S_{i-1}$ and $S_i$. (We note that, since $S_{i-1} \subset S_i$, we have $\left( S_{i-1} \cup T  \right) \cap S_i = S_{i-1} \cup \left(  T  \cap S_i \right)$.)

The two non-identity cases of the toggle are clearly mutually inverse. We must verify that they preserve
\begin{enumerate}
\item The crowd hull of the chain of sets.
\item The fact that all the sets in the chain are crowded.
\item The fact that $S_m$ is   the largest index for which $S_m$ is not a crowding record. 
\end{enumerate}
The last claim is clear, since we modify the chain before $S_m$. In order to study the other two, we must study the crowding of  the set which is inserted or deleted.

Assume without loss of generality that we are in the case where $\alpha_i=\tv$. Let $R = S_{i-1} \cup T \cap S_i$ and let $S'_{\bullet}$ be the chain where we have inserted $R$ into $S_{\bullet}$. 

By our choice of $i$, the string leading up to $\alpha_i$ must be of the form $\tu^j \tt^{i-j-1} \tv$ (possibly with $j$ and/or $i-j-1$ equal to~0). Thus, $S_j \cup T = S_{i-1} \cup T$ and so $R= S_j \cup T \cap S_i$.

By the supermodularity of crowding (Lemma~\ref{StressSupermodular}), we have
\[ \stress(R) \geq \stress(S_j) + \stress(T \cap S_i) - \stress(S_j \cap T \cap S_i) .\]
Since $\alpha_1=\alpha_2=\cdots = \alpha_j = \tu$, we have $S_j \cap T = \emptyset$ and thus $\stress(S_j \cap T \cap S_i)  = \stress(\emptyset) = 0$. 
Also, since $S_j$ is crowded, we have $\stress(S_j) \geq 0$. So we have shown that $\stress(R) \geq \stress(T \cap S_i)$. 

We now apply supermodularity again:
\[  \stress(T \cap S_i) \geq \stress(T) + \stress(S_i) - \stress(T \cup S_i) \geq \stress(S_i) \]
where the second inequality is by the choice of $T$. Putting it all together, we have
\[ \stress(R) \geq \stress(S_i). \]

We use this to address the two issues left open above. Since $\stress(R) \geq \stress(S_i)$ and $R$ is inserted before $S_i$, the set $R$ does not appear in the crowd hull of $S'_{\bullet}$, so $\StressHull(S_{\bullet}) = \StressHull(S'_{\bullet})$. Also, since $S_i$ is crowded, we have $\stress(R) \geq \stress(S_i) \geq 0$, and thus $R$ is crowded, so $S'_{\bullet}$ is a chain of crowded sets.

Thus, the toggle operator is an involution on  $\Chains(\StressedSets(M))$, which preserves the crowd hull, and which switches the length of the chain by $\pm 1$ whenever there is a set which is not a crowding record in the chain. So all terms coming from chains with sets that are not crowding records cancel, and we are reduced to summing over chains of crowding records.
\end{proof}

We now prove a version of Proposition~\ref{RecordStressorOnly} for flats, instead of sets:

\begin{proposition}\label{RecordStressorFlatsOnly}
In the second sum of Proposition~\ref{stressedChains}, we can instead sum only over chains of flats all of whose elements are crowding records.
\end{proposition}

\begin{proof}
The proof is very similar to that of Proposition~\ref{RecordStressorOnly}; we only record the differences.
We will denote our chains by $F_{\bullet}$ instead of $S_{\bullet}$ as a reminder that the elements are now assumed to be flats.

Let $F$ be a flat which is not a crowding record, and let $T$ be an overcrowded subset of $F$. Then $\cl_M(T) \subseteq F$ and $\stress(\cl_M(T)) \geq \stress(T)$. We claim that we can therefore always take $T_F$ in the previous proof  to be a flat. There is a small subtlety: We have to rule out the scenario that $\stress(T) = \stress(\cl_M(T)) =  \stress(F)$, with $\cl_M(T)$  a summand of $F$, although $T$ is not such a summand. But the only way that $\stress(T) = \stress(\cl_M(T))$  is if $T = \cl_M(T)$, so this does not occur. 
Thus, we take the set $T_F$ to be a flat whenever $F$ is a flat.

Let $F_{\bullet}$ be a chain of flats, one of whose flats is not a crowding record. As before, let $m$ be the smallest index such that $F_m$ is not a crowding record, and write $T:=T_{F_m}$, $U:=F_m \setminus T$. We once again write down a string of symbols $(\alpha_1, \alpha_2, \ldots, \alpha_m)$ in the alphabet $\{ \tt, \tu, \tv \}$, which this time are defined as follows: Let $M_i$ be the matroid $M|{F_i}/F_{i-1}$, a matroid on the ground set $F_i \setminus F_{i-1}$. We define $\alpha_i$ to be:
\begin{enumerate}
\item[$\tt$] if $T \cap (F_i \setminus F_{i-1})$ spans $M_i$.
\item[$\tu$] if $T \cap (F_i \setminus F_{i-1}) = \emptyset$.
\item[$\tv$] if $T \cap (F_i \setminus F_{i-1})$ is nonempty, but does not span $M_i$.
\end{enumerate}
We note that condition $\tu$ can equivalently be stated as $F_i \setminus F_{i-1} \subseteq U$. 

Once again, since $U$ is uncrowded, it does not occur in $F_{\bullet}$, so $(\alpha_1, \alpha_2, \ldots, \alpha_m)$ must contain either a $\tt \tu$ or a $\tv$; let $i$ be the first such position, as before.
If $(\alpha_i, \alpha_{i+1}) = (\tt,\tu)$, then we define a new chain of flats by deleting $F_i$. Conversely, if $\alpha_i = \tv$, then we define a new chain of flats by inserting $\cl_M(F_{i-1} \cup T \cap F_i)$. As before, these operations are mutually inverse. (Note that $\cl_M(F_{i-1} \cup T \cap F_i)$ is the flat of $M$ whose image in $M_i$ is the span of  $T \cap (F_i \setminus F_{i-1})$.) We now must repeat the previous computations of crowding.

As before, assume that we are in the case where $\alpha_i = \tv$, so the toggling operation is to insert $R:=\cl_M(F_{i-1} \cup T \cap F_i)$ into $F_{\bullet}$. Define $j$ as before. This time, we don't know that $F_{i-1} \cup T \cap F_i= F_j \cup T \cap F_i$, but we know that these sets have the same closure, since $T \cap (F_k \setminus F_{k-1})$ spans $M_k$ for $j < k \leq i-1$. So $R = \cl_M(F_j \cup T \cap F_i)$. Now, as before, $F_j \cap T = \emptyset$, and we deduce that $\stress(F_i) \leq \stress(F_j \cup T \cap F_i)$ as before. We then have $\stress(F_j \cup T \cap F_i) \leq \stress(R)$, so $\stress(F_i) \leq \stress(R)$, and we are done as before. 
\end{proof}

Thus, we can restrict our sums to running over chains of sets or flats in which every element of the chain is a crowding record.
Note that, in such a chain, we automatically have $\stress(S_0) \leq \stress(S_1) \leq \stress(S_2) \leq \cdots \leq \stress(S_{\ell})$. 
In fact, in  Theorem~\ref{FinalCancellationResult}  we will  restrict even further, to a sum where the inequalities are strict. 

We first introduce some notation and prove a lemma: Let $S \subseteq E$ be a crowding record. Let $S^1$, $S^2$, \dots, $S^r$ be the connected components of the matroid $M|S$, so $S = \bigsqcup S^i$. 
By Lemma~\ref{RecordSummand}, we have $\stress(S^i) \geq 0$ for all $i$.  We define $Y(S) = \bigcup_{\stress(S^i) > 0} S^i$ and $Z(S):=\bigcup_{\stress(S^i) = 0} S^i$.
\begin{lemma} \label{ZSummand}
Let $H' \subset H$ be crowding records.  Then $H' \cap Z(H)$ is a summand of both $Z(H)$ and  $Z(H')$.
\end{lemma}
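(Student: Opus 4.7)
The plan is to reduce everything to a statement about each connected component of $M|H$ separately. Write $H = S^1 \sqcup \cdots \sqcup S^k$ for the decomposition into connected components in $M|H$, so that $Z(H) = \bigsqcup_{i : \stress(S^i) = 0} S^i$. The core claim I will prove is that for each such $S^i$ with $\stress(S^i) = 0$, the intersection $H' \cap S^i$ must be either $\emptyset$ or all of $S^i$; both conclusions of the lemma then follow immediately.

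To prove this core claim, I would first invoke \Cref{RecordSummand}: since $S^i$ is a summand of the crowding record $H$, it is itself a crowding record. Next, because $M|H = \bigoplus_i M|S^i$, the rank function splits along the components, giving $\rank(H') = \sum_i \rank(H' \cap S^i)$, so $H' \cap S^i$ is a summand of $H'$. Applying \Cref{RecordSummand} to the crowding record $H'$ yields $\stress(H' \cap S^i) \geq 0 = \stress(S^i)$. On the other hand, $H' \cap S^i$ is a subset of the crowding record $S^i$, so it is not overcrowded in~$S^i$. These two facts, together with the definition of overcrowdedness, force $\stress(H' \cap S^i) = 0$ and $H' \cap S^i$ to be a summand of $S^i$. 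Since $M|S^i$ is connected, its only summands are $\emptyset$ and $S^i$, completing the core claim.

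Both conclusions of the lemma will then be immediate. First, $H' \cap Z(H) = \bigsqcup_{i : S^i \subseteq H'} S^i$ is manifestly a union of connected components of $M|Z(H)$, hence a summand of $Z(H)$. Second, for each such index~$i$ the set $S^i$ is a summand of $H'$ on which $M$ restricts to a connected matroid, so $S^i$ is a connected component of $M|H'$; since $\stress(S^i) = 0$, it lies inside $Z(H')$. Thus $H' \cap Z(H)$ is a union of connected components of $M|H'$ all sitting inside $Z(H')$, i.e.\ a summand of $Z(H')$.

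The main obstacle, and the place where both crowding-record hypotheses really earn their keep, is the simultaneous use of the hypothesis on $H$ (which makes each $S^i$ a crowding record and thereby provides the ``not overcrowded'' upper bound $\stress(H' \cap S^i) \leq \stress(S^i)$) and on $H'$ (which provides the lower bound $\stress(H' \cap S^i) \geq 0$ via \Cref{RecordSummand}). These bounds pin the crowding at~$0$, at which point non-overcrowdedness supplies the nontrivial summand conclusion rather than being vacuous; without either half of this pincer one cannot exclude the possibility that $H' \cap S^i$ is a proper, nontrivial subset of~$S^i$.
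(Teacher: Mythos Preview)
Your proof is correct and follows essentially the same approach as the paper's: both arguments reduce to showing that for each connected component $C$ of $Z(H)$ (your $S^i$ with $\stress(S^i)=0$), the intersection $C\cap H'$ is $\emptyset$ or $C$, by the same pincer argument using that $C\cap H'$ is a summand of the crowding record $H'$ (so $\stress\ge 0$) and a subset of the crowding record $C$ of crowding $0$ (so $\stress\le 0$, and then a summand of the connected~$C$). Your deduction of the two stated conclusions from this core claim is also the same as the paper's, spelled out a bit more explicitly.
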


\begin{proof}
First note that, since $Z(H)$ is a summand of $H$, the intersection $Z(H) \cap H'$ must be a summand of $H'$. 

Let $C$ be any connected component of $Z(H)$; we want to show that $C \cap H'$ is either $\emptyset$ or $C$. Since $C$ is a summand of $Z(H)$, which is a summand of $H$, we know that $C \cap H'$ is a summand of $H'$. Since $H'$ is a crowding record, we deduce that $\stress(C') \geq 0$. But also, $C$ is a summand of the crowding record $H$, so by Lemma~\ref{RecordSummand}  $C$ is a crowding record, and since  $C' \subseteq C$, so we deduce that $\stress(C') \leq 0$. Thus, $\stress(C') = 0$ and, using that $C$ is a crowding record, we deduce that $C'$ is a summand of $C$. Since $C$ is connected, this means that either $C' = \emptyset$ or $C'=C$, as desired.

We noted in the first paragraph that $Z(H) \cap H'$ must be a summand of $H'$. We have now shown that $Z(H) \cap H'$ is a direct sum of matroids of crowding zero, so we deduce more strongly that $Z(H) \cap H'$ is a summand of $Z(H')$, as desired.
\end{proof}

For the next theorem, we switch our conventions on chains so that they need not contain the empty set. For a chain  $\emptyset \subseteq H_0 \subset H_1 \subset H_2 \subset \cdots \subset H_m=E$, we  adjust our definition of 
$\verts(H_{\bullet}, a_M(H_{\bullet}))$ so that it consists of $p_M(H_i)$ for all $H_i \neq \emptyset, E$.  
With this notation, we will prove: 

\begin{theorem} \label{FinalCancellationResult}
We have 
\[  \omega^{\circ}(M) = \sum_{H_{\bullet}} (-1)^{c(H_0) + m-1} \cdot  \#\{ \textup{Ferroni paths weakly above $\verts(H_{\bullet}, a_M(H_{\bullet}))$} \} \]
where $c(H_0)$ is the number of connected components of $M|{H_0}$ and the sum is over chains of sets $\emptyset \subseteq H_0 \subset H_1 \subset H_2 \subset \cdots \subset H_m=E$ obeying the following conditions: 
\begin{enumerate}
\item All the $H_a$ are crowding records.
\item $0= \stress(H_0) < \stress(H_1) < \cdots < \stress(H_m)$.
\item We have $Z(H_0) \supseteq Z(H_1) \supseteq \cdots \supseteq Z(H_m)$.
\end{enumerate}
Moreover, the same is true if we impose additionally that the $H_i$ are flats.
\end{theorem}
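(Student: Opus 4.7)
The plan is to start from the sum given by \Cref{RecordStressorOnly} (respectively \Cref{RecordStressorFlatsOnly} for the flats version), and group chains $S_\bullet$ of crowding records by their crowd hull $C_\bullet := \StressHull(S_\bullet)$. By \Cref{StressHull}, the Ferroni path count depends only on $C_\bullet$, so the sum reorganizes as
\[
\omega^\circ(M) = \sum_{C_\bullet} F(C_\bullet) \cdot N(C_\bullet), \qquad F(C_\bullet) := \sum_{\StressHull(S_\bullet) = C_\bullet} (-1)^{|S_\bullet|-1}.
\]
I expect the $H_\bullet$ in the theorem to correspond precisely to those $C_\bullet$ for which $F(C_\bullet) \neq 0$, with $F(C_\bullet) = (-1)^{c(H_0)+m-1}$.

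First I would check that every such $C_\bullet = (C_0,\ldots,C_t)$ automatically satisfies $\stress(C_0) = 0$: a short case analysis on the earliest element of $S_\bullet$ missing from its crowd hull shows $\stress(C_0) \leq 0$, and $C_0$ is a crowding record so $\stress(C_0) \geq 0$. Next, chains $S_\bullet$ with $\StressHull(S_\bullet) = C_\bullet$ arise from $C_\bullet$ by inserting crowding records $T$ in each gap $(C_i, C_{i+1})$, plus a prefix between $\emptyset$ and $C_0$ when $C_0 \neq \emptyset$. The key observation is that ``$T$ not in the new crowd hull,'' combined with the fact that $C_{i+1}$ is a crowding record containing $T$, forces $\stress(T) = \stress(C_{i+1})$. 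Hence $T$ is a summand of $C_{i+1}$ that contains every stress-positive component of $C_{i+1}$ as well as every component touching $C_i$ (since $T \supseteq C_i$); analogously, prefix insertions are stress-zero summands of $C_0$.

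Consequently $F(C_\bullet)$ factors as a sign times $F_{\text{prefix}} \cdot \prod_i F_i$, where $F_i$ is the signed chain count in the truncated Boolean lattice $B_{c_i}$ minus its top ($c_i$ being the number of stress-zero components of $M|C_{i+1}$ disjoint from $C_i$), and $F_{\text{prefix}}$ is the signed chain count in the open interval of $B_{c(C_0)}$. The order complex of $B_{c_i}$ minus its top is a cone with apex $\emptyset$, hence contractible, so $F_i = 1$ if $c_i = 0$ and $F_i = 0$ otherwise; the open interval of $B_c$ has the order complex of an $(c-2)$-sphere, giving $F_{\text{prefix}} = (-1)^{c(C_0)-1}$. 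Tracking the overall sign uniformly across Case~1 ($C_0 = \emptyset$, no prefix) and Case~2 ($C_0 \neq \emptyset$) yields $F(C_\bullet) = (-1)^{c(C_0) + t - 1}$ when all $c_i = 0$, and $F(C_\bullet) = 0$ otherwise.

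Finally I would translate $c_i = 0$ into condition~(3) of the theorem: applying \Cref{ZSummand} to $C_i \subset C_{i+1}$ implies that any stress-zero component of $M|C_{i+1}$ which meets $C_i$ lies entirely inside $C_i$, so $c_i = 0 \iff Z(C_{i+1}) \subseteq C_i \iff Z(C_{i+1}) \subseteq Z(C_i)$. Conditions~(1) and~(2) hold automatically from the crowd-hull construction, and every $H_\bullet$ satisfying~(1)--(3) is realized as the crowd hull of its own chain (with $\emptyset$ prepended when $H_0 \neq \emptyset$), establishing the required bijection. For the flats version one additionally needs the summands we insert to be flats of $M$; this holds because in a loop-free matroid, components of a flat $C_{i+1}$ are themselves flats of $M$, and their unions remain flats since closure in $M$ localizes to closure in $C_{i+1}$. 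I expect the main obstacle to be the signed-chain-count computations in the truncated Boolean lattices and the delicate parity bookkeeping between the prefix and gap contributions; the matroid-theoretic inputs about crowding records and $Z$-summands are already provided by the preceding lemmas.
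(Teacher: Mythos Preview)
Your proposal is correct and follows essentially the same route as the paper: start from \Cref{RecordStressorOnly}/\Cref{RecordStressorFlatsOnly}, group chains of crowding records by their crowd hull, observe that inserted sets between $C_i$ and $C_{i+1}$ must be summands of $C_{i+1}$ of the same crowding, and then factor the signed count of preimages as a product of Boolean-lattice chain sums whose evaluation yields the sign $(-1)^{c(H_0)+m-1}$ and the $Z$-nesting condition. The only presentational differences are that you phrase the Boolean-lattice computations topologically (order complexes, cones, spheres) where the paper argues elementarily, and you spell out the flats case (unions of components of a flat are flats, using loop-freeness) that the paper dismisses as ``virtually identical.''
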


 $\verts(H_{\bullet}, a_M(H_{\bullet}))$

\begin{proof}
We discuss the case of chains of sets; the result for flats is virtually identical.

We start with the sum in Proposition~\ref{RecordStressorOnly}, which runs over chains of crowding records, and we combine terms whose indexing chains have the same crowd hull. 
Thus, we first discuss the following question: Let $S_{\bullet}$ be a chain of crowding records (including $\emptyset$ and $E$, as usual) and let $H_{\bullet} = \StressHull(S_{\bullet})$. What can we say about $H_{\bullet}$?

Firstly, the last element of $S_{\bullet}$ is always in $\StressHull(S_{\bullet})$, so we have $H_m = E$. 
Secondly, since $H_{\bullet}$ is a subchain of $S_{\bullet}$, each element of $H_{\bullet}$ must be a crowding record. Thirdly, by the definition of the crowd hull, we have $\stress(H_0) < \stress(H_1) < \cdots < \stress(H_m)$. Finally, since all the $S_i$ are crowded, and since $\stress(S_0) = \stress(\emptyset) = 0$, we must have $\stress(H_0) = 0$. However, $H_0$ need not be $\emptyset$; rather, $H_0 = S_i$ where $i$ is the largest index such that $\stress(S_i) = 0$.

Thus, with that as motivation, we consider a chain of  crowding records  $H_0 \subset H_1 \subset H_2 \subset \cdots \subset H_m=E$  obeying  $0= \stress(H_0) < \stress(H_1) < \cdots < \stress(H_m)$, and we set out to understand all chains $S_{\bullet}$ of crowding records with $\StressHull(S_{\bullet}) = H_{\bullet}$.

Let $S_{\bullet}$ be such a chain. We renumber the elements of $S_{\bullet}$ as
\[\begin{array}{r@{\,}c@{\,}c@{\,}c@{\,}c@{\,}c@{\,}c@{\,}c@{\,}c@{\,}c@{\,}l}
&S_{00} &\subset& S_{01} &\subset& S_{02} &\subset& \cdots &\subset& S_{0 p_0} &= H_0  \\
\subset& S_{10} &\subset& S_{11} &\subset& S_{12} &\subset& \cdots &\subset& S_{1 p_1} &= H_1 \subset   \\
&&&&&&& \cdots && &   \\
\subset& S_{m0} &\subset& S_{m1} &\subset& S_{m2} &\subset& \cdots &\subset& S_{m p_m} &= H_m =E . \\
\end{array}\]
Using that all the sets $S_{ab}$ are all crowding records, but that only the $S_{a p_a}$ are in the crowd hull, we have
\[ \begin{array}{r@{\,}c@{\,}c@{\,}c@{\,}c@{\,}c@{\,}c@{\,}l}
\stress(S_{00}) &=&  \stress(S_{01}) &=& \stress (S_{02}) &=& \cdots &= \stress(S_{0 p_0}) \\
< \stress(S_{10}) &=&  \stress(S_{11}) &=& \stress (S_{12}) &=& \cdots &= \stress(S_{1 p_1}) \\
&&&&&& \cdots & \\
< \stress(S_{m0}) &=&  \stress(S_{m1}) &=& \stress (S_{m2}) &=& \cdots &= \stress(S_{m p_m}) .\\
\end{array} \]
Since $S_{a p_a}$ is a crowding record, each $S_{ab}$ must be a summand of $S_{a p_a}$ and, since they have the same crowding, $S_{ab}$ must be of the form $Z_{ab} \sqcup Y(H_a)$, where $Z_{ab}$ is a summand of $Z(H_a)$. 

Conversely, suppose that,  inside each $Z(H_a)$, we choose a chain of summands $Z_{a0} \subsetneq Z_{a1} \subsetneq \cdots \subsetneq Z_{a p_a}  \subseteq  Z(H_a)$, and we set $S_{ab} = Z_{ab} \cup Y(H_a)$. When will $S_{ab}$  be a chain of crowding records, containing $\emptyset$ and $E$, with crowd hull $H_{\bullet}$? We need three things:
\begin{enumerate}\renewcommand{\theenumi}{\roman{enumi}}
\item In order to get $S_{00} = \emptyset$, we must take $Z_{00} = \emptyset$.
\item In order to get $S_{ap_a} = H_a$, we must take $Z_{ap_a} = Z(H_a)$.
\item For $a \geq 1$, in order to get $H_{a-1} \subseteq S_{a0}$, we must take $Z_{a0}$ such that $H_{a-1} \subseteq Z_{a0} \cup Y(H_a)$. Since $H_{a-1} \subset H_a =  Z(H_a) \sqcup Y(H_a)$, it is equivalent to ask that $H_{a-1} \cap Z(H_a) \subseteq Z_{a0}$. 
\end{enumerate}
In short, we must have a chain of summands $\emptyset = Z_{00} \subsetneq Z_{01} \subsetneq \cdots \subsetneq Z_{0p_0} = H_0$ and, for each $a \geq 1$, we must have a chain of summands $H_{a-1} \cap Z(H_a) \subseteq Z_{a0} \subsetneq Z_{a1} \subsetneq \cdots \subsetneq Z_{ap_a} = Z(H_a)$. 
Note that the choices for different $a$ are independent, so we can count them separately and multiply.

We start with the $a=0$ factor. Let $H_0 = \bigsqcup_{i=1}^{c(H_0)} C_i$ be the decomposition of $H_0$ into connected components, so each $Z_{0b}$ must be of the form $\bigsqcup_{i \in I_b} C_i$. Also, $I_0$ must be $\emptyset$ and $I_{p_0}$ must be $[c(H_0)]$. So we are summing over chains in a boolean lattice of rank $c(H_0)$, with the requirement that we include the top and bottom element. This sum is $(-1)^{c(H_0)}$. 

Now, let $a \geq 1$.
By Lemma~\ref{ZSummand}, $H_{a-1} \cap Z(H_a)$ is a summand of $Z(H_a)$, so we can write $Z(H_a) = \left( H_{a-1} \cap Z(H_a) \right) \sqcup \bigsqcup_{i=1}^{\ell} C_i$ for various connected components $C_i$. So each $Z_{ab}$ is of the form $\left( H_{a-1} \cap Z(H_a) \right) \sqcup \bigsqcup_{i \in J_b} C_i$ for some index set $J_b$, and $J_{p_a}$ must be $\ell$, but $J_0$ is not required to be $\emptyset$. This time, we are summing over chains in a boolean lattice of rank $\ell$, but we are only required to include the top element, not the bottom. This sum is $-1$ if $\ell=0$ and is $0$ otherwise. So we only get a contribution from terms where $Z(H_a) =  H_{a-1} \cap Z(H_a)$, and that contribution is $-1$.

The condition $Z(H_a) =  H_{a-1} \cap Z(H_a)$ can be restated more nicely as $Z(H_a) \subseteq H_{a-1}$. 
But, from Lemma~\ref{ZSummand}, we already know that $H_{a-1} \cap Z(H_a)$ is a summand of $Z(H_{a-1})$, so we can restate this condition further as $Z(H_a) \subseteq Z(H_{a-1})$, as we stated it in the statement of the theorem.
\end{proof}

With Theorem~\ref{FinalCancellationResult}, we have achieved as much cancellation as is possible between chains with the same crowd hull; the distinct summands in Theorem~\ref{FinalCancellationResult} have distinct crowd hulls.
In the version of  Theorem~\ref{FinalCancellationResult} with chains of sets, we can still have two summands with the same Ferroni paths above them, because of the issue discussed in Remark~\ref{BetterHullRemark}. 
However, in the version with chains of flats, this cannot occur -- distinct summands in the version of Theorem~\ref{FinalCancellationResult} with flats always lie below different sets of Ferroni paths.

 Theorem~\ref{FinalCancellationResult} is extremely powerful in the version with flats, because the length of our chains is bounded both by the rank $r$ of $E$ and by the crowding $n-2r$ of $E$. Thus, we can get simple computations when either $r$ or $n-2r$ is small. We turn to this idea in the next two sections.

\addtocontents{toc}{\smallskip \textbf{Part 3: Special cases}}

\section{Matroids of low rank} \label{LowRankSection}

In this section, we will give formulas for $\omega(M)$ when $\rank(M) \leq 4$. We first note preliminary reductions: By Proposition~\ref{multiplicative}, we may as well assume that $M$ is connected,
which, further assuming $M$ is not a singleton loop, implies that $M$ is loop-free.
By \cite[Corollary 7.5]{KTutte} we can assume that $M$ does not have any pair of parallel elements,
i.e., as matroid theorists say, that $M$ is \newword{simple}.
Note that this last assumption means that all flats of rank one are singletons, and are therefore uncrowded.
\textbf{We make all of these assumptions for the remainder of Section~\ref{LowRankSection}.}

Under these reductions, if $\rank(M)=1$ 
then $M$ is a singleton coloop with $\omega(M)=0$;
it is however a standing assumption in \cite{KTutte} that matroids have no loops or coloops,
and collapsing parallel elements without creating coloops only reduces $M$ to $\Uniform{1}{2}$, with $\omega(\Uniform{1}{n})=\omega(\Uniform{1}{2})=1$ for $n\ge2$.
If $\rank(M)=2$, then our reductions imply $M$ is the uniform matroid $\Uniform{2}{n}$ with $n\ge3$, which has $\omega$ invariant $n-3$.

We move on to rank~3.

\begin{prop} \label{Rank3Formula}
Let $M$ be a rank~$3$ matroid on~$E$ satisfying our assumptions.
Let $L_1$, $L_2$, \dots, $L_{m}$ be the flats of rank~$2$. Then
\[ \omega(M) = \binom{|E|-4}{2} - \sum_i \binom{|L_i|-2}{2} . \]
\end{prop}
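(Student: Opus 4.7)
The plan is to apply \Cref{FinalCancellationResult} with chains of flats, together with \Cref{overStressed} for the boundary case. Since $M$ is connected, $\omega(M) = \omega^{\circ}(M)$. I would begin by classifying crowded flats under the standing assumptions of this section: every rank-$1$ flat is a singleton and hence uncrowded, so the crowded flats are $\emptyset$ (stress $0$), the rank-$2$ flats $L$ with $|L| \ge 4$ (stress $|L|-4$), and $E$ itself when $n \ge 6$ (stress $n-6$). A direct check shows that every rank-$2$ flat $L$ with $|L|\ge 4$ is automatically a crowding record: in $M|L = \Uniform{2}{|L|}$ every proper subset $T$ satisfies $\stress(T) \le 0$, with equality only at the summand $T = \emptyset$.

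Because a chain of flats with strictly increasing stresses can contain at most one rank-$2$ flat, the chains appearing in the sum from \Cref{FinalCancellationResult} fall into four types; the $Z$-nesting condition $Z(H_0) \supseteq \cdots \supseteq Z(H_m)$ is immediate in each case from connectedness of $M$ and of $M|L$. I would enumerate them as follows:
\begin{itemize}
\item $(\emptyset, E)$, valid for $n \ge 7$, contributing $+\binom{n-4}{2}$, the total number of Ferroni paths;
\item $(E)$, valid only when $n=6$, contributing $+\binom{2}{2}=1$;
\item $(\emptyset, L, E)$ for each rank-$2$ flat $L$ with $5 \le |L| \le n-3$, contributing $-\binom{|L|-2}{2}$, since the Ferroni paths weakly above the single vertex $(|L|-2,\,2)$ are precisely those whose two diagonal steps both lie among the first $|L|-2$ steps;
\item $(L, E)$ for each rank-$2$ flat $L$ with $|L|=4$ and $n>6$, contributing $-\binom{2}{2} = -\binom{|L|-2}{2}$.
\end{itemize}
Summing, and using $\binom{|L|-2}{2}=0$ for $|L|\le 3$, yields the claimed identity, provided $E$ is itself a crowding record.

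The hard part will be the remaining case where $E$ is not a crowding record: here \Cref{overStressed} gives $\omega(M)=0$, but one must also check that the right-hand side of the formula vanishes. A rank-based case analysis, using simpleness to rule out large rank-$1$ and non-flat rank-$2$ subsets, and using the absence of coloops (which forces $|L|\le n-2$ for every rank-$2$ flat in a connected rank-$3$ matroid), shows that $E$ fails to be a crowding record exactly when some rank-$2$ flat $L^\ast$ has $|L^\ast| = n-2$. A submodularity argument then bounds the intersection of two distinct rank-$2$ flats to have rank at most $1$ and, by simpleness, size at most $1$; combined with $|L_1 \cap L_2| \ge |L_1|+|L_2|-n$, this forces every rank-$2$ flat $L \ne L^\ast$ to satisfy $|L| \le 3$. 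Hence $\sum_i \binom{|L_i|-2}{2} = \binom{|L^\ast|-2}{2} = \binom{n-4}{2}$ and the formula collapses to~$0$, completing the proof.
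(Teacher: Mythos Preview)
Your proof is correct, but it takes a substantially longer route than the paper's. The paper simply applies \Cref{stressedChains} (the version summing over chains of crowded \emph{flats}), not \Cref{FinalCancellationResult}. In rank~$3$ the only crowded proper nonempty flats are the rank-$2$ flats $L$ with $|L|\ge 4$, so the relevant chains are exactly $(\emptyset,E)$ and $(\emptyset,L_i,E)$; their contributions are $+\binom{n-4}{2}$ and $-\binom{|L_i|-2}{2}$, and one observes that including the uncrowded $L_i$ as well adds only zeros. That is the whole argument.

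By reaching instead for \Cref{FinalCancellationResult}, you force yourself to verify that each flat appearing is a crowding record, split according to whether $\stress(L)=0$ or $\stress(L)>0$, handle $n=6$ separately, check the $Z$-nesting condition, and---most substantially---treat the case where $E$ itself is not a crowding record via \Cref{overStressed} followed by an ad~hoc verification that the right-hand side collapses to zero. All of these steps are carried out correctly (in particular your submodularity argument that $|L^\ast|=n-2$ forces every other line to have size $\le 3$ is sound), but none of them are needed: in \Cref{stressedChains} the chain $(\emptyset,L^\ast,E)$ with $|L^\ast|=n-2$ is a perfectly good summand, contributing $-\binom{n-4}{2}$ and cancelling the $(\emptyset,E)$ term automatically. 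The moral is that the extra cancellation built into \Cref{FinalCancellationResult} buys nothing here, because in rank~$3$ there is no way to have two comparable rank-$2$ flats in a chain anyway; the less-processed sum of \Cref{stressedChains} is already short.
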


\begin{proof}
We put $n= |E|$ and $\ell_i = |L_i|$,
and apply Proposition~\ref{stressedChains} for flats.
Since the only crowded flats are the flats of rank $2$, the only chains of crowded sets are $(\emptyset, E)$ and those of the form $(\emptyset, L_i, E)$. 
The first chain of flats does not exclude any of the $\binom{n-4}{2}$ Ferroni paths from $(\frac12,\frac12)$ to $(n-3\frac12,2\frac12)$, 
each of which has $2$ diagonal steps and $n-6$ horizontal steps.
The chain of flats  $(\emptyset, L_i, E)$ requires the Ferroni paths to pass weakly over the point $(\ell_i-2,2)$. Such a path must have both its diagonal steps in within the first $\ell_i-2$ positions, so there are $\binom{|L_i|-2}{2}$ such paths. 
In the proposition the sum includes also the uncrowded rank~2 flats,
but these contribute $\binom{2-2}2=0$ or $\binom{3-2}2=0$.
\end{proof}

\begin{cor}
If $M$ is a matroid of rank $3$, then $\omega(M) \geq 0$. 

Among loop-free $M$, equality occurs if and only if
the simplification of~$M$ is either isomorphic to $\Uniform 35$ or
has a rank~$2$ flat containing all but two of its points.
\end{cor}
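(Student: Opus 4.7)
The plan is to first apply the standing reductions of \Cref{LowRankSection} so that $M$ is simple and connected of rank~$3$, whence \Cref{Rank3Formula} gives
\[
\omega(M) \;=\; \binom{n-4}{2} \;-\; \sum_i\binom{\ell_i-2}{2},
\]
where the $\ell_i$ are the sizes of the rank-$2$ flats of $M$. I will prove $\omega(M)\geq 0$ and identify the equality cases by induction on~$n$. The base case $n=4$ forces $M=\Uniform{3}{4}$, in which both terms vanish and every $2$-subset is a rank-$2$ flat of size $n-2$, matching the stated equality.

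For the inductive step, my plan is to use the deletion identity
\[
\omega(M) \;-\; \omega(M\setminus e) \;=\; d_e + d'_e - 4,
\]
where $d_e$ counts the rank-$2$ flats of $M$ through $e$ and $d'_e$ counts those of size $\geq 3$. This follows from a direct expansion of the formula together with $\sum_{L\ni e}(\ell_L-1)=n-1$, valid whenever $M\setminus e$ is itself simple, connected, and of rank $3$. The key combinatorial step will be to show $d_e+d'_e\geq 4$ for every element $e$ in such a matroid with $n\geq 5$; the cases $d_e=2$ with $d'_e\leq 1$ and $d_e=3$ with $d'_e=0$ will be ruled out because each would force either a rank-$2$ flat of size $n-1$ (hence a coloop, contradicting connectedness) or $n\leq 4$.

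The delicate point is that $M\setminus e$ need not stay connected; this happens exactly when $M$ has a rank-$2$ flat $L$ of size $n-2$ and $e\notin L$, i.e.\ in the \emph{near-pencil} case. Here I will instead delete some $e'\in L$: a short check will show that $M\setminus e'$ is still a connected near-pencil on $n-1$ elements (with rank-$2$ flat $L\setminus e'$ of size $n-3$) and that $d_{e'}+d'_{e'}=4$ exactly, so that the induction gives $\omega(M)=\omega(M\setminus e')=0$. In the complementary case where $M$ has no rank-$2$ flat of size $n-2$, every deletion keeps $M\setminus e$ connected, and the identity yields $\omega(M)\geq\omega(M\setminus e)\geq 0$ by induction.

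For the equality characterization, the sufficiency of either condition (near-pencil or $\Uniform{3}{5}$) will follow from the analysis above together with direct computation. For the converse, suppose $\omega(M)=0$ and $M$ is not a near-pencil. Then every deletion forces $d_e+d'_e=4$ and $\omega(M\setminus e)=0$, so by induction each $M\setminus e$ is $\Uniform{3}{5}$ or a near-pencil on $n-1$ elements. I expect the main obstacle to be this last step: one must argue combinatorially that a rank-$2$ flat of $M\setminus e$ of size $n-3$ lifts to a rank-$2$ flat of $M$ of the same size (since by hypothesis $M$ has no flat of size $n-2$), and then show that the resulting family of large flats is so constrained that it can only occur when $n=5$ and $M\cong\Uniform{3}{5}$.
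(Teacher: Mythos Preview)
Your inductive deletion approach is correct in outline and genuinely different from the paper's argument. The paper instead finds four points $P=\{p_1,p_2,p_3,p_4\}$ in general position (no three collinear) and uses the double-counting identity $\binom{n-4}{2}=\sum_i\binom{\ell_i-|L_i\cap P|}{2}$, from which $\omega(M)\geq 0$ is immediate since $|L_i\cap P|\leq 2$; the equality characterisation then falls out by asking when every pair in $E\setminus P$ lies on one of the six lines through~$P$. Your deletion identity $\omega(M)-\omega(M\setminus e)=d_e+d'_e-4$ is a nice alternative that makes the monotonicity of $\omega$ under deletion transparent, and your verification that $d_e+d'_e\geq 4$ is clean. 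Two loose ends to tighten: first, at $n=5$ a near-pencil can have \emph{two} rank-$2$ flats of size $n-2=3$ (e.g.\ $\{1,2,3\}$ and $\{3,4,5\}$), and then deleting an arbitrary $e'\in L$ may disconnect $M\setminus e'$; you must take $e'$ in the intersection, or simply treat $n=5$ as a second base case. Second, your sketch of the converse equality argument is right but undersold: once you know each $M\setminus e$ is a near-pencil on $n-1$ points (the $U_{3,5}$ option forces $n=6$ and is quickly excluded by your identity), the lifted flats of size $n-3$ are so large that for $n\geq 7$ at most two can coexist, and then some $e$ in their intersection has no avoiding flat, a contradiction; $n=6$ dies directly from the formula since $\binom{2}{2}=1$ forces a flat of size~$4$. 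The paper's route is shorter and gives the equality case essentially for free; yours has the virtue of exhibiting $\omega$ as monotone under point deletion in rank~$3$.
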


Note that a loop-free simple matroid $M$ of rank~$3$ that is also coloop-free is connected,
and if $M$ has a coloop then its complement is a rank~$2$ flat containing all but two (indeed all but one) of its points.

\begin{proof}
First, note that all of our previous reductions preserve the positivity of $\omega$, so we continue to make them. We also continue the notations $n$, $L_i$ and $\ell_i$ above.

We first claim that, in light of our previous reductions, we can find four elements in $E$ no three of which are dependent. Indeed, let $x_1$, $x_2$ and $x_3$ be a basis of $M$,  let $V_{ij}$ be the span of $\{ x_i, x_j \}$, and let $W_{ij} = V_{ij} \setminus \{ x_i, x_j \}$. If there is an element $z$ which is in none of the $V_{ij}$, then $\{ x_1, x_2, x_3, z \}$ has the desired property. Also, if two or more of the $W_{ij}$ are nonempty, then we are also done. Indeed, if $y_{ij} \in W_{ij}$ and $y_{jk} \in W_{jk}$ (with $i \neq k$) then $\{ x_i, x_k, y_{ij}, y_{jk} \}$ have the desired property. So the only case in which we are not done is if there is some $V_{ij}$, say $V_{23}$, such that every point other than $x_1$ lies on $V_{23}$. 
But then $M$ is disconnected, a contradiction.

So, let $p_1$, $p_2$, $p_3$, $p_4$ be four elements of $M$ no three of which are dependent. Put $P = \{ p_1,p_2, p_3, p_4 \}$ and $Q = E \setminus P$. Note that $\binom{n-4}{2}$ is the number of unordered pairs of distinct elements in $Q$. Each such pair spans a unique rank $2$ flat of $M$, and a rank $2$ flat $L_i$ is spanned by $\binom{|L_i \setminus P|}{2}$ many such pairs. So
\[\binom{n-4}{2} = \sum_i \binom{\ell_i - |L_i \cap P|}{2} . \]
By the condition on $ \{ p_1,p_2, p_3, p_4 \}$, we have $|L_i \cap P| \leq 2$ for all $i$, so 
\[ \sum_i \binom{\ell_i - |L_i \cap P|}{2}  \geq \sum_i \binom{\ell_i - 2}{2} . \]
Thus, $\omega(M) =  \binom{n-4}{2}  - \sum_i \binom{\ell_i - 2}{2} \geq 0$, as desired.

Equality occurs exactly when every pair of elements of~$Q$
has closure meeting $P$ in two points, 
i.e.\ lies in one of the flats $\cl_M(\{p_i,p_j\})$.
Call the collection of these six flats $\mathscr P$.
If $|Q|\ge2$ then we show all pairs from $Q$ determine the same flat in~$\mathscr P$.
Otherwise there exist $q_1,q_2,q_3\in Q$ so that the rank~$2$ flats $\cl(\{q_1,q_2\})$ and $\cl(\{q_1,q_3\})$ are unequal.
Then $\cl(\{q_2,q_3\})$ is unequal to either of these (if say $\cl(\{q_1,q_3\})=\cl(\{q_2,q_3\})=L$ then $q_1,q_2\in L$ so $\cl(\{q_1,q_2\})=L$).
Of these three distinct flats in $\mathscr P$, two must have intersection containing an element of~$P$, therefore of rank~$1$.
But this intersection also contains an element of $\{q_1,q_2,q_3\}$,
contradicting simplicity.
So $M$ has a single flat in~$\mathscr P$ containing all of~$Q$ plus two of the four elements of $P=E\setminus Q$.
If instead $|Q|<2$ there is no condition; 
the only case we haven't captured above is that in which 
$Q$ has exactly one element and it lies outside all the flats in $\mathscr P$,
which is the case $M\cong\Uniform 35$.
\end{proof}

In the same mode as Proposition~\ref{Rank3Formula} we can write down formulae for any fixed rank,
although the number of subsums grows exponentially.
Here is the result for rank~$4$.

\begin{prop} \label{Rank4Formula}
Let $M$ be a rank~$4$ matroid on~$E$ satisfying our assumptions.
Let $L_1$, $L_2$, \dots\ be the flats of rank~$2$, and $P_1$, $P_2$, \dots\ those of rank~$3$. 
Then
\begin{multline*}
\omega(M) = \binom{|E|-5}{3} 
- \sum_j \binom{|P_j|-3}{3} \\
- \sum_i \binom{|L_i|-2}{2}(|E|-\tfrac{2}{3}|L_i|-\tfrac{13}3)
+ \sum_{L_i\subset P_j} \binom{|L_i|-2}{2}(|P_j|-\tfrac{2}{3}|L_i|-\tfrac{7}{3}) . 
\end{multline*}
\end{prop}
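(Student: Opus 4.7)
The plan is to imitate the proof of \Cref{Rank3Formula}, applying the flats form of \Cref{stressedChains}. Write $n = |E|$. Under the standing reductions of this section, $M$ is connected, simple, loop-free, coloop-free and of rank $4$, so in particular $n \geq 8$.

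First I classify the crowded flats. The flat $\emptyset$ has crowding $0$, and since $M$ is simple every rank-$1$ flat is a singleton of crowding $-1$, hence uncrowded. A rank-$2$ flat $L_i$ has crowding $|L_i| - 4$, a rank-$3$ flat $P_j$ has crowding $|P_j| - 6$, and $E$ has crowding $n - 8 \geq 0$. So the chains of crowded flats from $\emptyset$ to $E$ come in four shapes: $(\emptyset, E)$; $(\emptyset, L_i, E)$ with $|L_i| \geq 4$; $(\emptyset, P_j, E)$ with $|P_j| \geq 6$; and $(\emptyset, L_i, P_j, E)$ with $L_i \subset P_j$. By \Cref{StressAndFerroni}, no Ferroni path lies weakly above the lattice point of an uncrowded flat, so in the final answer we may sum freely over all rank-$2$ and rank-$3$ flats: $\binom{|L_i|-2}{2}$ vanishes for $|L_i| \leq 3$ and $\binom{|P_j|-3}{3}$ vanishes for $|P_j| \leq 5$.

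Next I count the Ferroni paths weakly above each requisite set of lattice points. Such a path runs from $(\tfrac{1}{2},\tfrac{1}{2})$ to $(n-\tfrac{9}{2},\tfrac{7}{2})$, has $n - 5$ steps, and uses exactly three diagonal steps; it passes weakly above the integer point $(a,b)$ iff at least $b$ of its first $a$ steps are diagonal. Using $p_M(L_i) = (|L_i|-2, 2)$ and $p_M(P_j) = (|P_j|-3, 3)$, the four counts are $\binom{n-5}{3}$ (trivial chain), $\binom{|L_i|-2}{2}(n-|L_i|-3) + \binom{|L_i|-2}{3}$ (single $L_i$), $\binom{|P_j|-3}{3}$ (single $P_j$), and $\binom{|L_i|-2}{2}(|P_j|-|L_i|-1) + \binom{|L_i|-2}{3}$ (length-three chain). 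The only mildly delicate case is the last, where the condition ``weakly above $p_M(P_j)$'' forces all three diagonals into the first $|P_j| - 3$ positions.

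Finally I assemble the sum. The four chain shapes contribute with signs $+, -, -, +$ given by $(-1)^{|F_\bullet|-1}$. Absorbing the cubic binomials via $\binom{|L_i|-2}{3} = \tfrac{1}{3}(|L_i|-4)\binom{|L_i|-2}{2}$ turns the $L_i$-only contribution into $-\binom{|L_i|-2}{2}\bigl(n - \tfrac{2}{3}|L_i| - \tfrac{13}{3}\bigr)$ and the $(L_i, P_j)$ contribution into $+\binom{|L_i|-2}{2}\bigl(|P_j| - \tfrac{2}{3}|L_i| - \tfrac{7}{3}\bigr)$, yielding the stated formula. Beyond this bookkeeping there is no substantive obstacle; the work is entirely in the chain enumeration and the lattice-path count.
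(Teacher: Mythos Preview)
Your proof is correct and follows essentially the same approach as the paper's own proof: apply the flats version of \Cref{stressedChains}, enumerate the four shapes of chains of crowded flats, count the Ferroni paths weakly above the relevant lattice points for each shape, and simplify via $\binom{|L_i|-2}{3}=\tfrac{1}{3}(|L_i|-4)\binom{|L_i|-2}{2}$. One small quibble: the remark ``so in particular $n\ge 8$'' does not follow from the standing assumptions (for instance $\Uniform{4}{5}$ is simple and connected); rather, one may \emph{assume} $n\ge 8$ without loss of generality, since otherwise there are no Ferroni paths and every term in both the sum and the stated formula vanishes. This does not affect the argument.
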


\begin{proof}
As in the proof of Proposition~\ref{Rank3Formula}, we apply Proposition~\ref{stressedChains} for flats.
The four terms in the proposition arise from chains of crowded flats of the shapes 
$(\emptyset,E)$, $(\emptyset,P_j,E)$, $(\emptyset,L_i,E)$, and $(\emptyset,L_i,P_j,E)$ respectively.
As before, restricting these sums to run only over crowded flats only discards summands equal to~0.

For each shape of chain we count certain Ferroni paths.
We will explain this count just for the chains $(\emptyset,L_i,P_j,E)$,
as the other arguments are simplifications of this one.
For a Ferroni path from $(\frac12,\frac12)$ to $(|E|-4\frac12,3\frac12)$
to pass weakly over $p(L_i)=(|L_i|-2,2)$ and $p(P_j)=(|P_j|-3,3)$,
its first $|L_i|-2$ steps must include two or three of the three diagonal steps,
and all three diagonal steps must be among the first $|P_j|-3$.
When exactly two of the first $|L_i|-2$ steps are diagonal,
so the third falls strictly after index $|L_i|-2$ and not later than index $|P_j|-3$,
the number of such paths is $\binom{|L_i|-2}2\cdot((|P_j|-3)-(|L_i|-2))$.
The number of paths with all three diagonals among the first $|L_i|-2$ steps is
$\binom{|L_i|-2}3 = \binom{|L_i|-2}2\cdot\frac13(|L_i|-4)$.
The sum of these two counts is the fourth summand in the proposition.
\end{proof}

\section{Matroids with $n$ close to $2r$} \label{NearMiddleMatroidSection}

\subsection{The $\omega$ invariant for $n=2r$}\label{NearMiddleMatroidSection1}
 
\begin{proposition} \label{nIs2r}
Let $M$ be a rank $r$ matroid on~$E$ with $|E|=2r$. Then $\omega(M)$ is either $1$ or $0$. More specifically, let $M_1$, $M_2$, \dots, $M_c$ be the  connected components of $M$. Then $\omega(M)$ is $1$ if all of $M_i$ satisfy $|M_i| = 2 \rank(M_i)$ and have no proper nonempty crowded subsets, and is $0$ otherwise.
\end{proposition}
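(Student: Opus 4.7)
The plan is to reduce to the connected case via multiplicativity of~$\omega$, and then invoke Proposition~\ref{NoStress} and Corollary~\ref{overStressed}.

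First I will apply Proposition~\ref{multiplicative} to write $\omega(M) = \prod_i \omega(M_i)$, so that analyzing $\omega(M)$ amounts to determining each factor. Any component $M_i$ with $|M_i| < 2\rank(M_i)$ has $\omega(M_i) = 0$ by the degree bound on $g_{M_i}(t)$. Since $\sum_i \bigl(|M_i| - 2\rank(M_i)\bigr) = |E| - 2r = 0$, the presence of any component with $|M_i| > 2\rank(M_i)$ forces another with $|M_j| < 2\rank(M_j)$, and again $\omega(M) = 0$. Both situations violate the condition in the proposition, so they are handled correctly. The remaining case is when $|M_i| = 2\rank(M_i)$ for every component, which I treat next.

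Fix such a connected $M_i$ with $n_i := |M_i| = 2r_i$ and $r_i := \rank(M_i)$. If $M_i$ has no proper nonempty crowded subset then in particular no proper nonempty crowded flat, so Proposition~\ref{NoStress} applies and gives $\omega(M_i) = \binom{n_i - r_i - 1}{r_i - 1} = \binom{r_i-1}{r_i - 1} = 1$. Conversely, suppose $S$ is a proper nonempty crowded subset of~$M_i$. I will take $F := \cl_{M_i}(S)$; Lemma~\ref{StressClosure} gives $\stress(F) \geq \stress(S) \geq 0$. The key check is that $F \neq M_i$: if $F$ were $M_i$ then $\rank S = r_i$ and the inequality $\stress(S) \geq 0$ would demand $|S| \geq 2r_i$, contradicting properness. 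Thus $F$ is a proper nonempty crowded flat, and because $M_i$ is connected $F$ is not a summand of $M_i$; so $F$ is overcrowded in $M_i$ in the sense of Section~9, and Corollary~\ref{overStressed} forces $\omega(M_i) = 0$.

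Putting these pieces together yields $\omega(M) = \prod_i \omega(M_i) \in \{0,1\}$, equal to $1$ precisely when every $M_i$ satisfies both stipulated conditions. I expect no serious obstacle: the main tools are already developed, and the only subtle point is the equivalence of ``no proper nonempty crowded subset'' with ``no proper nonempty crowded flat'' in the $n_i = 2r_i$ setting, which is supplied cleanly by the argument that the closure of a proper crowded subset cannot fill the ground set.
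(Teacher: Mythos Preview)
Your proof is correct and follows essentially the same approach as the paper: reduce to connected components via \Cref{multiplicative}, kill components with $n_i<2r_i$ by the degree bound, and then for each connected $M_i$ with $n_i=2r_i$ apply \Cref{NoStress} when there is no proper nonempty crowded subset and \Cref{overStressed} when there is one. The only difference is that the paper applies \Cref{overStressed} directly to the crowded subset $S$ (since $\stress(E_i)=0$, any proper nonempty crowded $S$ is already overcrowded in~$E_i$ by connectedness), so your passage to the closure $F=\cl_{M_i}(S)$ is an unnecessary but harmless detour.
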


\begin{proof}
By Proposition~\ref{multiplicative}, $\omega(M) = \prod \omega(M_i)$, so we are reduced to computing $\omega(M_i)$.

Let $n_i = |E_i|$ and let $r_i = \rank_M(E_i)$. Since $n = \sum n_i$ and $r = \sum r_i$, either we have $n_i = 2 r_i$ for all $i$, or else there is some $i$ with $n_i < 2 r_i$. In that latter case, $\omega(M_i)=0$ so $\omega(M)$ vanishes and we are done. Thus, we may (and do) assume that $n_i = 2 r_i$ for all $i$.

First, suppose that $M_i$ has a proper nonempty crowded subset $S$. Then $S$ is overcrowded in $M_i$, so $\omega(M_i)=0$ by Corollary~\ref{overStressed}.

On the other hand, suppose that none of the $M_i$ have any nonempty proper crowded subsets. Then Proposition~\ref{NoStress} shows that $\omega(M_i) = \binom{n_i-r_i-1}{r_i-1} = 1$ for all $i$, so $\omega(M) = 1$.
\end{proof}

\subsection{The $\omega$ invariant for $n=2r+1$}\label{NearMiddleMatroidSection2}
For this subsection we consider a rank $r$ matroid $M$ on~$E$ with $|E|=2r+1$.
We begin with preliminary reductions:

\begin{prop}
 Let $M_1$, $M_2$, \dots, $M_c$ be the  connected components of $M$, with $M_i$ of cardinality $n_i$ and rank $r_i$.
 If we have $n_i < 2 r_i$ for any index $i$, then $\omega(M)=0$. Otherwise, we can reorder the components such that $n_1 = 2 r_1 + 1$ and $n_i = 2 r_i$ for $2 \leq i \leq c$. 
 If any of $M_2$, $M_3$, \dots, $M_c$ has a  nonempty proper crowded subset, then we again have $\omega(M)=0$. 
 Finally, if $n_1 = 2 r_1 + 1$ and $n_i = 2 r_i$ for $2 \leq i \leq c$, and furthermore none of $M_2$, $M_3$, \dots, $M_c$ has a  nonempty proper crowded subset, then $\omega(M) = \omega(M_1)$.
  \end{prop}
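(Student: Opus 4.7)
The entire proof hinges on the multiplicativity $\omega(M) = \prod_{i=1}^c \omega(M_i)$ from \Cref{multiplicative}, combined with \Cref{nIs2r} applied componentwise. My plan is to dispatch the three conclusions in the order they appear.

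For the first conclusion, recall that $\omega(N) = 0$ whenever $N$ has rank greater than its corank. Hence if some $n_i < 2r_i$, then $\omega(M_i) = 0$, so $\omega(M) = 0$ by multiplicativity.

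For the second conclusion, simply note that $\sum_i(n_i - 2r_i) = |E| - 2\rank(M) = 1$. Assuming all summands are nonnegative (otherwise we are in the first case), exactly one of them equals $1$ and the rest equal $0$. Reindexing so that $M_1$ is the exceptional component gives the stated normal form.

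For the third conclusion, fix $i \ge 2$, so $n_i = 2r_i$. The component $M_i$ is connected (being a connected component of $M$, and consisting of more than one element since $n_i = 2r_i \geq 2$ if $r_i \ge 1$; if $r_i = 0$ then $M_i$ is empty and plays no role). Since $M_i$ is connected, its only summands, in the sense of \Cref{DFRelations} preceding \Cref{RecordSummand}, are $\emptyset$ and $E(M_i)$ itself. A nonempty proper crowded subset $S \subseteq E(M_i)$ therefore fails to be a union of connected components of $M_i$, so $S$ is overcrowded in $E(M_i)$ (either because $\stress(S) > 0 = n_i-2r_i$, or because $\stress(S) = 0$ and $S$ is not a summand). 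By \Cref{overStressed}, $\omega(M_i) = 0$, hence $\omega(M) = 0$.

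For the final conclusion, we have $n_i = 2r_i$ for $i \ge 2$ with no nonempty proper crowded subsets in any such $M_i$. The component $M_i$ is connected, so \Cref{nIs2r} gives $\omega(M_i) = 1$. Multiplicativity then yields $\omega(M) = \omega(M_1) \cdot \prod_{i\ge 2} 1 = \omega(M_1)$. There is no serious obstacle here; the proposition is essentially a bookkeeping corollary of \Cref{nIs2r} and \Cref{multiplicative}, with the only subtlety being the appeal to connectedness of the $M_i$ to ensure that any proper nonempty crowded subset is automatically overcrowded.
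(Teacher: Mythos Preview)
Your proof is correct and follows essentially the same route as the paper: multiplicativity of $\omega$ from \Cref{multiplicative}, the pigeonhole on $\sum_i(n_i-2r_i)=1$, and then \Cref{nIs2r} to evaluate the factors with $n_i=2r_i$. The only cosmetic difference is that in the third conclusion you unpack the case directly via \Cref{overStressed}, whereas the paper simply cites \Cref{nIs2r} (whose proof already contains that appeal to \Cref{overStressed}); both are fine.
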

  
  \begin{proof}
  We have $\omega(M) = \prod \omega(M_i)$. If $n_i < 2 r_i$ for some $i$, then $\omega(M_i)=0$ so $\omega(M)=0$.
   
  If we have $n_i \geq 2 r_i$ for all $i$, then using that $n = \sum n_i$, $r = \sum r_i$ and $n=2r+1$, we see that we have $n_j = 2 r_j+1$ for one index $j$, and $n_i = 2 r_i$ for all the other indices $i$. Without loss of generality, let $M_1$ be the component where $n_1 = 2 r_1 + 1$.
  
  From Proposition~\ref{nIs2r}, if $M_i$ has a nonempty proper crowded subset for $2 \leq i \leq c$, then $\omega(M_i)=0$ so $\omega(M)=0$. 
If none of $M_2$, $M_3$, \dots, $M_c$ has a  nonempty proper crowded subset, then $\omega(M_2) = \omega(M_3) = \cdots = \omega(M_c) =1$ by Proposition~\ref{nIs2r}, so $\omega(M) = \omega(M_1)$. 
  \end{proof}

Thus, from now on, we rename $M_1$ to $M$, which is to say \textbf{we assume from now on that $M$ is connected}. 
Moreover, if $M$ has a proper subset of positive crowding, Corollary~\ref{overStressed} implies $\omega(M)=0$.
So \textbf{we assume from now on that $\stress(S) \leq 0$ for every proper subset $S$ of $M$}.
It will also be convenient to \textbf{assume from now on that $r > 0$}; the only matroid which this excludes is the uniform matroid of rank $0$ on a singleton.

Let $T_1$, \dots, $T_p$ be the minimal nonempty crowded sets of~$M$.  We will need some lemmas about these before proceeding to our final computation:
\begin{lemma} \label{pgeq2}
There are at least two  minimal nonempty crowded sets of~$M$.
\end{lemma}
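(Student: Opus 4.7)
The plan is to exploit the fact that $M$ has no coloops (since $M$ is connected with $|E|=2r+1\ge 3$) in order to exhibit many nonempty proper crowded subsets of~$M$, enough to rule out a unique minimal one. The key observation will be that every ``co-singleton'' $E\setminus\{x\}$ is crowded.

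First I would verify this observation. Connectedness of~$M$ together with $|E|\ge 2$ forces $M$ to have no coloops, so for each $x\in E$ we have $\rank_M(E\setminus\{x\})=r$, and therefore
\[\stress(E\setminus\{x\}) \;=\; (2r+1-1)-2r \;=\; 0 \;\ge\; 0.\]
Thus every co-singleton $E\setminus\{x\}$ is a nonempty proper crowded subset of~$M$, consistent with our standing hypothesis that $\stress(S)\le 0$ for all proper $S$. With this in hand, suppose for contradiction that $p=1$, so $T_1$ is the unique minimal nonempty crowded subset. Then every nonempty crowded subset of~$M$ contains $T_1$; applying this to each $E\setminus\{x\}$ (equivalently, picking any $y\in T_1$ and noting that $E\setminus\{y\}$ is crowded yet avoids $y$) forces
\[T_1 \;\subseteq\; \bigcap_{x\in E}(E\setminus\{x\}) \;=\; \emptyset,\]
contradicting the nonemptiness of~$T_1$.

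There is essentially no obstacle: the only thing to watch is that the standing inequality $\stress(S)\le 0$ on proper $S$ is saturated (not violated) by each $E\setminus\{x\}$, so that co-singletons legitimately qualify as crowded in the paper's terminology. Everything else is immediate from the definitions.
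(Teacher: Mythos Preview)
Your proof is correct and follows essentially the same route as the paper's: both arguments rest on the observation that every co-singleton $E\setminus\{x\}$ is a nonempty proper crowded set, and then use an element of~$T_1$ to produce a crowded set not containing~$T_1$. The only cosmetic differences are that the paper phrases the argument as a direct construction of a second minimal set rather than a contradiction, and that the paper justifies crowdedness of $E\setminus\{i\}$ implicitly via $\rank_M(E\setminus\{i\})\le r$ rather than invoking the absence of coloops.
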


\begin{proof}
We first show that there is at least one such set. Since $r > 0$, we have $|E| \geq 3$ and so, for any $i \in E$, the set $E \setminus \{ i \}$ is nonempty. Then $E \setminus \{ i \}$ is a nonempty crowded subset, so $E \setminus \{ i \}$ must contain a minimal nonempty crowded subset, giving us one set $T_1$.

Now, let $j \in T_1$. Then, as above, $E \setminus \{ j \}$ must contain a minimal nonempty crowded subset, and this subset is not $T_1$, so there is at least one more.
\end{proof}

\begin{lemma}\label{2r+1Dichotomy}
Let $M$ and $T_1$, \dots, $T_p$ be as above. Then either:
\begin{enumerate}
\item[Case 1:] For every $i \neq j$, we have $T_i \cap T_j = \emptyset$ or else
\item[Case 2:] For every $i \neq j$, we have $T_i \cup T_j = E$ .
\end{enumerate}
\end{lemma}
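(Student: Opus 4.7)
The plan is to leverage the supermodularity of $\stress$ (\Cref{StressSupermodular}) together with the minimality of the $T_i$, in two stages.

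First I would verify that every $T_i$ is a proper subset of $E$ with $\stress(T_i) = 0$. Properness holds because for any $x \in E$ the set $E \setminus \{x\}$ has $\stress = 2r - 2\rank_M(E\setminus\{x\}) \geq 0$, so $E$ contains nontrivial crowded subsets and is not minimal among them. The standing assumption $\stress(S) \leq 0$ for proper $S$ then pins $\stress(T_i) = 0$. The core intermediate claim I would establish is: for any distinct $T_i, T_j$, either $T_i \cap T_j = \emptyset$ or $T_i \cup T_j = E$. Indeed, if both fail, then $T_i \cup T_j$ is proper so $\stress(T_i \cup T_j) \leq 0$, and \Cref{StressSupermodular} gives
\[\stress(T_i \cap T_j) \geq \stress(T_i) + \stress(T_j) - \stress(T_i \cup T_j) \geq 0,\]
making $T_i \cap T_j$ a nonempty crowded set strictly contained in both $T_i$ and $T_j$, contradicting minimality.

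To close out the dichotomy, assume Case 1 fails, so some pair satisfies $T_a \cup T_b = E$; I would show every other pair covers $E$ as well. For any $T_c$ with $c \notin \{a,b\}$, the core claim forces either $T_c \cap T_a = \emptyset$ (in which case $T_c \subseteq E \setminus T_a \subseteq T_b$ and minimality gives the impossible $T_c = T_b$) or $T_c \cup T_a = E$, and the same reasoning with $b$ in place of $a$ yields $T_c \cup T_b = E$. Then for two distinct $T_c, T_d$ both outside $\{T_a, T_b\}$, if they were disjoint the inclusions $E \setminus T_a \subseteq T_c$ and $E \setminus T_a \subseteq T_d$ would force $E \setminus T_a \subseteq T_c \cap T_d = \emptyset$, i.e., $T_a = E$, contradicting properness; hence $T_c \cup T_d = E$ by the core claim. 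Pairs involving $a$ or $b$ are handled by the same arguments, so Case 2 holds.

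The hard part is bookkeeping in the final stage rather than any single sharp idea: one has to keep track of asymmetries between the ``scaffold'' pair $(T_a,T_b)$ and arbitrary pairs, and ensure the minimality argument $T_c \subseteq T_b \Rightarrow T_c = T_b$ is being invoked against the correct indexing convention that distinct indices denote distinct sets. A minor sanity check worth recording is that the parity $|T_i|+|T_j| \equiv 0 \pmod{2}$ (from $|T_i|=2\rank T_i$) precludes a pair from simultaneously being disjoint and covering $E=|2r+1|$, which keeps the two cases of the dichotomy consistent.
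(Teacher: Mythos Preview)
Your proof is correct. The first stage---establishing that each $T_i$ is proper with $\stress(T_i)=0$, and that any distinct pair satisfies $T_i\cap T_j=\emptyset$ or $T_i\cup T_j=E$---is the same supermodularity-plus-minimality argument the paper gives.

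The upgrade from the pairwise claim to the global dichotomy is where you diverge. You fix a ``scaffold'' pair $(T_a,T_b)$ with $T_a\cup T_b=E$ and argue case by case, using minimality (to rule out $T_c\subseteq T_b$) and properness of $T_a$ (to rule out $E\setminus T_a=\emptyset$). The paper instead takes any hypothetical triple exhibiting both behaviours and derives a one-line contradiction from distributivity: if $T_i\cap T_j=T_i\cap T_k=\emptyset$ but $T_j\cup T_k=E$, then $T_i=T_i\cap(T_j\cup T_k)=(T_i\cap T_j)\cup(T_i\cap T_k)=\emptyset$; the dual case gives $T_i=E$. This is shorter and avoids the bookkeeping you flag as the hard part, but your route has the virtue of being entirely constructive and of making explicit which structural facts (minimality of the $T_i$, properness in~$E$) are actually doing the work. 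Your parity remark matches the paper's observation that the two alternatives for a single pair are mutually exclusive.
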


\begin{proof}
We first begin by showing that, for an individual pair of indices $(i,j)$, we either have  $T_i \cup T_j = E$ or $T_i \cap T_j = \emptyset$.
Suppose, to the contrary, that neither is true.
By the supermodularity of crowding, 
\[ \stress(T_i \cup T_j) + \stress(T_i \cap T_j) \geq \stress(T_i) + \stress(T_j) = 0 . \]
But $T_i \cup T_j$, by assumption, is not $E$, so $\stress(T_i \cup T_j) \leq 0$.
And $T_i \cap T_j$, by assumption, is nonempty, and is a proper subset of the minimal crowded set $T_i$, so $\stress(T_i \cap T_j) < 0$. 
This is a contradiction, and we have established the claim for an individual $(i,j)$. 

We also note that it is impossible to have both $T_i \cap T_j = \emptyset$ and $T_i \cup T_j = E$ for some $(i,j)$. Indeed, since $T_i$ and $T_j$ have crowding $0$, they have even cardinality, but $|E|$ is odd.

So, for every $(i,j)$, exactly one of the two conditions above holds. We now must show that it is the same choice for every pair $(i,j)$. Suppose, for the sake of contradiction, that we have three distinct indices $i$, $j$, $k$ with $T_i \cap T_j = T_j \cap T_k = \emptyset$ and $T_j \cup T_k = E$.
Then
\[ T_i \cap (T_j \cup T_k)  = T_i \cap E = T_i \ \textup{but} \ (T_i \cap T_j) \cup (T_i \cap T_k) = \emptyset \cup \emptyset , \]
contradicting the distributivity of $\cap$ over $\cup$. Similarly, if $T_i \cup T_j = T_j \cup T_k = E$ and $T_j \cap T_k = \emptyset$, we obtain a contradiction to the distributivity of $\cup$ over $\cap$.
\end{proof}

We are now ready to begin our computation of $\omega(M)$. 
We will use Theorem~\ref{FinalCancellationResult}, in the version where we sum over sets of crowding records. Our sum is over chains $H_0 \subset H_1 \subset \cdots \subset H_m = E$ where (among conditions), we have $\stress(H_0) = 0$ and $\stress(H_0) < \stress(H_1) < \cdots < \stress(H_m) = 1$. So our chains are all of length $1$, of the form $(H_0, E)$. We will shorten $H_0$ to $H$. In light of our previous reductions, the only condition on $H$ is that it is a crowding record of crowding $0$,  and the only condition on our Ferroni paths is that they pass weakly above $p_M(H)$.

\begin{lemma} \label{PathCount}
Let $H$ be a set of crowding $0$. The number of Ferroni paths passing weekly above $p_M(H)$ is $r - \rank(H)$.
\end{lemma}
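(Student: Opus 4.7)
The plan is a direct enumeration. Because $|E|=2r+1$, every Ferroni path from $(\frac12,\frac12)$ to $(r+\frac12,r-\frac12)$ has exactly $r$ steps, of which $r-1$ are diagonal (slope $1$) and exactly one is horizontal. Such a path is determined by the position $k\in\{1,2,\ldots,r\}$ of its single horizontal step, giving $r$ Ferroni paths altogether.

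Since $\stress(H)=0$ is precisely $|H|=2\rank(H)$, the point $p_M(H)$ equals $(\rho,\rho)$ where $\rho:=\rank(H)$. Because $p_M(H)$ has integer coordinates while the path visits only half-integer lattice points, the condition ``weakly above $p_M(H)$'' (in the sense of \Cref{FerroniOpen}, i.e.\ not strictly below) reduces to asking that the path's $y$-coordinate at the integer abscissa $x=\rho$ be at least $\rho$. I would compute this $y$-coordinate by a three-case analysis on the position of $k$ relative to $\rho$: if $\rho<k$, then the $\rho$-th step is one of the initial diagonals running from $(\rho-\frac12,\rho-\frac12)$ to $(\rho+\frac12,\rho+\frac12)$, giving $y=\rho$ at $x=\rho$; if $\rho=k$, the path is on its horizontal step and $y=\rho-\frac12$; and if $\rho>k$, then the $\rho$-th step is a diagonal occurring after the horizontal step, so only $\rho-2$ of the first $\rho-1$ steps were diagonal and a short tally gives $y=\rho-1$ at $x=\rho$.

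Comparing, the $y$-value at $x=\rho$ is at least $\rho$ exactly when $k>\rho$, and is strictly smaller otherwise, so the number of Ferroni paths passing weakly above $p_M(H)$ is $|\{\rho+1,\ldots,r\}|=r-\rho=r-\rank(H)$. This also matches the boundary cases: when $H=\emptyset$ and $\rho=0$ every one of the $r$ paths qualifies, while when $\rho=r$ the formula correctly gives $0$ since no Ferroni path ever attains $y=r$. There is no serious obstacle; the only subtlety is pinning down the meaning of ``weakly above'' when the reference point has integer coordinates, and after that the argument is pure step-counting.
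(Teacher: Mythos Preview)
Your proof is correct and follows essentially the same approach as the paper. Both arguments observe that, since $n=2r+1$, a Ferroni path is determined by the position of its single horizontal step; the paper simply phrases the condition as ``the path must start with $\rank(H)$ diagonal steps'' rather than carrying out your more detailed case analysis on the $y$-coordinate at $x=\rho$.
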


\begin{proof}
Since $H$ has crowding $0$, we have $|H| = 2 \rank(H)$, so $p_M(H) = (\rank(H), \rank(H))$. A Feronni path lying weakly above this point must start with $\rank(H)$ diagonal steps. The remaining $r-1-\rank(H)$ diagonal steps and single horizontal step can be shuffled in any order, giving $r-\rank(H)$ choices. 
\end{proof}

So, our formula for $\omega(M)$ simplifies to
\begin{equation}
 \sum_H (-1)^{c(H)} (r-\rank(H)) \label{SumToDoStress1}
 \end{equation}
where the sum is over $H$ a crowding record of crowding $0$.

We will now compute~\eqref{SumToDoStress1} in each of the two cases of Lemma~\ref{2r+1Dichotomy}.

\begin{prop}
Suppose that we are in Case 1 of Lemma~\ref{2r+1Dichotomy}, meaning that the $T_i$ are disjoint. Then the crowding records of crowding zero are exactly the sets of the form $\bigsqcup_{i \in I} T_i$, and we have  $\omega(M) = 0$.
\end{prop}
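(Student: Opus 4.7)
The proof plan splits naturally along the proposition's two claims.

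For the characterization of crowding records of crowding $0$, I would first establish as an auxiliary lemma that each minimal crowded set $T_i$ is itself connected in $M$: if $T_i = T_i' \sqcup T_i''$ were a nontrivial direct-sum decomposition, then both $\stress(T_i')$ and $\stress(T_i'')$ are $\le 0$ (as proper subsets of $E$) and sum to $\stress(T_i) = 0$, so both equal $0$, contradicting minimality of $T_i$. Given this, suppose $H$ is a crowding record with $\stress(H) = 0$. For each $T_i \subseteq H$, the crowding-record property applied with $\stress(T_i) = \stress(H)$ forces $T_i$ to be a summand of $M|H$, and its connectedness promotes it to a single component of $M|H$. Conversely, since $|H|$ is even and $|E| = 2r+1$ is odd, $H \subsetneq E$ and each component $C$ of $M|H$ satisfies $\stress(C) \le 0$; as stresses sum across components to $\stress(H) = 0$, every component $C$ has $\stress(C) = 0$ and therefore contains some minimal crowded $T_i \subseteq C$, which by the previous sentence equals $C$. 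So $H$ is the union of the $T_i$ it contains, and these are pairwise disjoint by Case~1 of \Cref{2r+1Dichotomy}.

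For the reverse inclusion, I would verify that any $H := \bigsqcup_{i \in I} T_i$ is a crowding record of crowding $0$. The bound $\rank(H) \le \sum_i \rank(T_i) = |H|/2$ together with $H \subsetneq E$ (by parity) forces $\stress(H) = 0$ and makes the $T_i$ the connected components of $M|H$. For any $S \subseteq H$, additivity of stress across these components gives $\stress(S) = \sum_i \stress(S \cap T_i)$; each term is $\le 0$ by minimality of $T_i$, with equality only when $S \cap T_i \in \{\emptyset, T_i\}$, in which case $S$ is a union of components of $M|H$ and hence a summand.

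Finally, to conclude $\omega(M) = 0$, I would apply formula~\eqref{SumToDoStress1}. Writing $r_i := \rank(T_i)$, each crowding record $H = \bigsqcup_{i \in I} T_i$ has $c(H) = |I|$ and $\rank(H) = \sum_{i \in I} r_i$, so
\[
\omega(M) = \sum_{I \subseteq [p]} (-1)^{|I|}\Big(r - \sum_{i \in I} r_i\Big) = r \sum_{I \subseteq [p]} (-1)^{|I|} - \sum_{i=1}^{p} r_i \sum_{I \ni i} (-1)^{|I|}.
\]
Both alternating sums vanish because $p \ge 2$ by \Cref{pgeq2}. The main obstacle I expect is in the first part: matching each $T_i \subseteq H$ with a single connected component of $M|H$ rather than a possibly larger union of components, which hinges on the connectedness of $T_i$ established above; once that is settled, the remainder is routine bookkeeping.
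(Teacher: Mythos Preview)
Your argument is correct. It follows the same overall plan as the paper's but differs in a few local choices that are worth noting.

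For showing that every crowding record $H$ of crowding zero is a disjoint union of $T_i$'s, the paper sets $T=\bigcup_{T_i\subseteq H}T_i$, observes that $T$ is a summand of~$H$ with $\stress(T)=0=\stress(H)$, and derives a contradiction from $\stress(H\setminus T)=0$ if $T\ne H$; you instead decompose $M|H$ into connected components and match each component with some $T_i$, which is why you need the preliminary lemma that each $T_i$ is connected. The paper never invokes that lemma. For the reverse inclusion, once one observes that $\rank(\bigsqcup_I T_i)=\sum_{i\in I}\rank(T_i)$ (so that $M|H=\bigoplus_{i\in I}M|T_i$), your argument and the paper's coincide; the paper reaches the crowding-zero statement by an induction using supermodularity rather than your one-step rank subadditivity plus parity, but these are equivalent.

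For the vanishing of $\omega(M)$, the paper packages the sum as $\left.\dfrac{d}{dx}\right|_{x=1}\,x^{r}\prod_{i=1}^{p}(1-x^{-r_i})$ and uses that the product has a zero of order $p\ge2$ at $x=1$; your direct expansion into two alternating sums, each vanishing because $p\ge2$, is exactly the unpacked form of the same identity. Neither route has any real advantage over the other.
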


\begin{proof}
We first note that $\bigcup_i T_i \neq E$. Indeed, the $T_i$ are disjoint and of even cardinality, and $|E|$ is odd. Similarly, the union of any subcollection of the $T_i$ is also not $E$.

We next claim that any union $\bigcup_{i \in I} T_i$ is of crowding $0$. This follows by induction on the cardinality of $I$. Suppose that we know the result for smaller index sets and choose some $j \in I$. Then we have 
\[ \stress(\bigcup_{i \in I} T_i) \geq   \stress(\bigcup_{i \in I \setminus j} T_i) + \stress(T_j) -  \stress(\emptyset) = 0+0-0 = 0 \]
where $ \stress(\bigcup_{i \in I \setminus j} T_i) =0$ by induction, and the intersection is empty by disjointness. So the union is crowded. But $E$ is the only set of positive crowding, so $\bigcup_{i \in I} T_i$ must be $\emptyset$, completing the induction.

We now claim that these unions are crowding records. Indeed, let $V \subseteq T_{i_1} \cup T_{i_1} \cup \cdots \cup T_{i_k}$ be a set of crowding $0$. Then, by minimality of the $T_i$'s and by the absence of sets of positive crowding,  we must have $\stress(V \cap T_{i_j}) = \emptyset$ or $T_{i_j}$ for each $T_{i_j}$. 

Finally, let $W$ be any crowding record, and let $T = \bigcup_{T_i \subseteq W} T_i$. By the above, $T$ is a set of crowding $0$ contained in $W$, so $T$ must be a summand of $W$. If $T \neq W$, then $T \setminus W$ is a set of crowding $0$, and this set must contain some $T_j$, contradicting the construction of $T$. So $T= W$, and we have shown that every crowding record is of the form $\bigsqcup_{i \in I} T_i$.

Having proved all of this, we compute $\omega(M)$.
Set $r_i= \rank_M(T_i)$, so $|T_i| = 2 r_i$.
So \eqref{SumToDoStress1} gives
\[ \omega(M) = \sum_{I \subseteq \{1,2,\ldots, p \}} (-1)^{|I|-1} \sum_{i \in I} (r-r_i) . \]
We can compute this as
\[ \left. \frac{d}{dx}  \right|_{x=1} x^r  \prod_{i = 1}^p (1-x^{-r_i}) . \]
Since $p \geq 2$ (Lemma~\ref{pgeq2}), this is $0$.
\end{proof}

We now consider Case 2:

\begin{prop}
Suppose that we are in Case 2 of Lemma~\ref{2r+1Dichotomy}, meaning that  $T_i \cup T_j = E$ for all $i \neq j$. Then the crowding records of crowding $0$ are $\emptyset$ and the $T_i$.  Moreover, $p$ is odd, and $\omega(M) = \tfrac{p-1}{2}$.
\end{prop}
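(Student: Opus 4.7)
The plan is to use formula~\eqref{SumToDoStress1} directly: since our standing hypotheses make $\omega(M) = \sum_H (-1)^{c(H)}(r-\rank(H))$ where $H$ ranges over crowding records of crowding $0$, everything reduces to (i) classifying such $H$, and (ii) evaluating the resulting sum.

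First I would show that the crowding records of crowding $0$ are exactly $\emptyset$ and the minimal crowded sets $T_1, \ldots, T_p$. Both $\emptyset$ and each $T_i$ are crowding records: for $T_i$, any proper subset has $\stress<0$ by minimality, so nothing can be overcrowded in $T_i$. Conversely, suppose $H\ne\emptyset$ is a crowding record of crowding~$0$. Since $\stress(E)=1\ne 0$, we have $H\subsetneq E$. The set $H$ contains some $T_i$. If $H\ne T_i$, then $T_i$ and $H$ both have crowding~$0$, so the crowding-record condition forces $T_i$ to be a summand of $H$; by \Cref{RecordSummand} the summand $H\setminus T_i$ is itself a crowding record, so $\stress(H\setminus T_i)\ge 0$, and combined with the standing bound $\stress(\cdot)\le 0$ on proper subsets of $E$, we get $\stress(H\setminus T_i)=0$. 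Thus $H\setminus T_i$ contains some $T_j$ disjoint from $T_i$. Since $T_j\subseteq H\setminus T_i$ excludes $j=i$, Case~2 gives $T_i\cup T_j=E$, forcing $|T_i|+|T_j|=|E|=2r+1$, which contradicts the evenness of the crowded cardinalities $|T_i|,|T_j|$.

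Next I would verify that each $T_i$ is connected as a matroid, so that $c(T_i)=1$ in the formula. If $T_i$ split as a direct sum, the summands would have $\stress\ge 0$ by the global assumption and $\stress<0$ by minimality of $T_i$, which is impossible. Writing $r_i := \rank(T_i) = |T_i|/2$ and noting $c(\emptyset)=0$, formula~\eqref{SumToDoStress1} collapses to
\[\omega(M) = r - \sum_{i=1}^p (r - r_i).\]

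The key remaining step is to show $\sum_i |T_i| = (p-1)(2r+1)$; once this holds, a direct algebraic simplification turns the formula above into $(p-1)/2$, and as each $|T_i|$ is even while $2r+1$ is odd, the identity forces $p$ to be odd. Equivalently, I would show $\sum_i |E\setminus T_i| = |E|$ by establishing that the sets $E\setminus T_i$ partition $E$. Disjointness is immediate from Case~2: $(E\setminus T_i)\cap(E\setminus T_j)=E\setminus(T_i\cup T_j)=\emptyset$ for $i\ne j$. For the covering, I would argue that for each $x\in E$ there exists some $T_i$ avoiding $x$: the matroid $M$ is connected with $r\ge 1$, hence has no coloops, so $\rank(E\setminus\{x\})=r$ and $\stress(E\setminus\{x\})=0$, making $E\setminus\{x\}$ a crowded set and therefore containing some $T_i$. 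This covering step is the main obstacle since it is the place where the hypothesis that $M$ is connected and coloop-free (rather than just the structural hypotheses on the $T_i$) is essential; the rest is bookkeeping from Case~2 and supermodularity.
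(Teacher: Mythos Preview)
Your proposal is correct and follows essentially the same route as the paper: classify the crowding records of crowding~$0$ as $\emptyset$ and the $T_i$, observe each $T_i$ is connected, plug into~\eqref{SumToDoStress1}, and then show the complements $E\setminus T_i$ partition~$E$ to evaluate the sum. The only cosmetic differences are that the paper phrases the contradiction in the classification step as ``we are in Case~1 rather than Case~2'' (which unwinds to your parity argument), and for the covering step the paper simply notes $\stress(E\setminus\{x\})\ge 0$ rather than invoking coloop-freeness explicitly.
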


Note that $p \geq 2$ (Lemma~\ref{pgeq2}) so, in this case where $p$ must be odd, we have $p \geq 3$ and  thus $\omega(M) = \tfrac{p-1}{2} \geq 1$. This is the only case where $\omega(M)$ is nonzero.

\begin{proof}
By their definition, the $T_i$ are crowding records. 

Let $V$ be any nonempty crowding record of crowding $0$, we must show that it is one of the $T_i$. By the minimality of the sets $T$, there is some $T_i \subseteq V$; suppose for the sake of contradiction that this $T_i$ is not all of $V$. Since $V$ is a crowding record, $T_i$ is a summand of $V$, and we deduce that $\stress(V \setminus T_i)=0$, so there is some $T_j \subseteq V \setminus T_i$. But then $T_i \cap T_j = \emptyset$, and we are in Case 1 rather than Case 2. We have now verified that the $T_i$ are all of the nonempty crowding records of crowding $0$. Also, each $T_i$ is connected, as otherwise its connected components would be smaller crowded sets.
So~\eqref{SumToDoStress1} reads
\[ \omega(M) =  r - \sum_i (r-\rank(T_i))  .\]
We now must simplify this sum. 

We set $U_i = E \setminus T_i$. So the $U_i$ are disjoint. As in the proof of  Lemma~\ref{pgeq2}, for any $j \in [n]$, there is some minimal nonempty crowded set $T_i$ contained in $[n] \setminus \{ j \}$, and thus $j$ is in $U_i$ for this $i$. So the $U_i$ partition $E$. 

Put $r_i = \rank_M(T_i)$, so $|T_i| = 2 r_i$ and $|U_i| = n - 2 r_i$. Since the $U_i$ partition $E$, we have $\sum (n-2 r_i) = n$ or, in other words, $pn - 2 \sum r_i = n$. In particular, since $n$ is odd, this shows that $p$ is odd, as claimed. We rewrite this one more time for future use: $\sum r_i = \tfrac{p-1}{2} n$.

We get
\[ \omega(M) = r - \sum_i (r-r_i) = r - pr + \sum r_i = r - pr + \tfrac{p-1}{2} n = \tfrac{p-1}{2} ( n - 2r) = \frac{p-1}{2} . \qedhere \]
\end{proof}

\bibliography{Omega}{} \bibliographystyle{alpha}

\end{document}